\documentclass[10pt]{article}
\usepackage{lmodern}
\usepackage{amsmath}
\usepackage[T1]{fontenc}
\usepackage[utf8]{inputenc}
\usepackage{authblk}
\usepackage{amsfonts}
\usepackage{graphicx}
\usepackage{rotating}
\usepackage{amssymb}
\usepackage[english]{babel}
\usepackage{xcolor}
\usepackage{amsthm}
\usepackage{graphicx}
\usepackage{mathrsfs}
\usepackage{makecell}
\usepackage{microtype}
\usepackage{mathscinet}
\usepackage{array}
\usepackage{multirow}
\usepackage{enumerate}
\usepackage[cal=boondoxo,bb=ams]{mathalfa}
\usepackage{hyperref}
\hypersetup{hidelinks}

\newtheorem{theorem}{Theorem}[section]
\newtheorem{prop}{Proposition}[section]
\newtheorem{lemma}{Lemma}[section]

\newtheorem{remark}{Remark}[section]

\newcommand{\ml}{\mathcal}
\newcommand{\mb}{\mathbb}
\DeclareMathOperator{\intt}{int}
\DeclareMathOperator{\extt}{ext}
\DeclareMathOperator{\bdd}{bdd}
\DeclareMathOperator{\lin}{lin}
\DeclareMathOperator{\nlin}{nlin}

\def\XXint#1#2#3{{\setbox0=\hbox{$#1{#2#3}{\int}$ }
		\vcenter{\hbox{$#2#3$ }}\kern-.6\wd0}}

\title{Large time asymptotic behavior for the dissipative Timoshenko system and its application}
\author[1]{Wenhui Chen\thanks{Wenhui Chen (wenhui.chen.math@gmail.com)}}
\affil[1]{School of Mathematics and Information Science, Guangzhou University,\authorcr 510006 Guangzhou, China}

\date{}

\setlength{\topmargin}{-10mm}
\setlength{\textwidth}{7in}
\setlength{\oddsidemargin}{-8mm}
\setlength{\textheight}{8.5in}
\setlength{\footskip}{1in}

\begin{document}
		\maketitle

		\begin{abstract}
			\medskip
		In this paper, we study large time behavior for the dissipative Timoshenko system in the whole space $\mb{R}$, particularly, on the transversal displacement $w$ and the rotation angle $\psi$ of the filament for the beam. Different from decay properties of the energy term derived by Ide-Haramoto-Kawashima (2008) \cite{Ide-Haramoto-Kawashima=2008}, we discover new optimal growth $L^2$ estimates for the solutions themselves. Under the non-trivial mean condition on the initial data $w_1$, the unknowns $w$ and $\psi$ grow polynomially with the optimal rates $t^{3/4}$ and $t^{1/4}$, respectively, as large time. Furthermore, asymptotic profiles of them are introduced by the diffusion plate function, which explains a hidden cancellation mechanism in the shear stress $\partial_xw-\psi$. As an application of our results, we study the semilinear dissipative Timoshenko system with a power nonlinearity. Precisely, if the power is greater than the Fujita exponent, then the global in time existence of Sobolev solution is proved for the case of equal wave speeds, which partly gives a positive answer to the open problem in Racke-Said-Houari (2013) \cite{Racke-Said=2013}.
			\\
			
			\noindent\textbf{Keywords:} dissipative Timoshenko system, optimal growth estimate, large time profile, regularity-loss, semilinear Cauchy problem, global in time existence \\
			
			\noindent\textbf{AMS Classification (2020)} 35L52, 35B40, 35L71, 35A01
		\end{abstract}
\fontsize{12}{15}
\selectfont

\section{Introduction}\label{Section_Introduction}\setcounter{equation}{0}
\subsection{Dissipative Timoshenko system}
 \hspace{5mm}In the 1920s, the innovative works \cite{Timoshenko-01,Timoshenko-02} from S.P. Timoshenko introduced a class of hyperbolic coupled systems for the vibration of the beam (called the Timoshenko beam), which may describe the transversal movement, the shear deformation and the rotational inertia effects. From a suitable scaling, the classical Timoshenko system may take the form
 \begin{align*}
\begin{cases}
\partial_t^2w-\partial_x(\partial_xw-\psi)=0,\\
\partial_t^2\psi-a^2\partial_x^2\psi-(\partial_xw-\psi)=0,
\end{cases}
 \end{align*}
with the time variable $t$ and the spatial variable $x$ denoting the point on the center line of the beam, where the unknowns $w=w(t,x)$ and $\psi=\psi(t,x)$ stand for the transversal displacement of the beam from the equilibrium state and the rotation angle of the filament of the beam, respectively. The positive constant $a$ is called the wave speed depending on some physical parameters of the beam (the density, the polar moment of inertia of a cross section, the Young modulus of elasticity, the moment of inertia of a cross section, and the shear modulus). Its physical background is referred to the monograph \cite{Graff=1975}.
 
  Due to the fact that some resistances cannot be neglected in real world problems, the so-called dissipative Timoshenko system was firstly considered in \cite{Munoz-Racke=2008}, which is addressed by
 \begin{align}\label{ALL-Dissipative-Timoshenko}
 	\begin{cases}
 		\partial_t^2w-\partial_x(\partial_xw-\psi)=0,\\
 		\partial_t^2\psi-a^2\partial_x^2\psi-(\partial_xw-\psi)+\partial_t\psi=0.
 	\end{cases}
 \end{align}
Its dissipation is produced here through the frictional damping $\partial_t\psi$ presenting in the equation of rotation angle only. The authors of \cite{Munoz-Racke=2008} considered this system \eqref{ALL-Dissipative-Timoshenko} in a bounded region with simple boundary conditions and demonstrated that the energy term decays exponentially when $a=1$ (the case of equal speeds) but polynomially when $a\neq 1$ (the case of non-equal speeds) as $t\to+\infty$. Then, qualitative properties of solutions to the dissipative Timoshenko system \eqref{ALL-Dissipative-Timoshenko} and its related models have drawn a lot of attention, including well-posedness, stability, large time behavior, global attractor, numerical analysis. We refer to \cite{Munoz-Racke=2008} and references given therein.

 Over the past twenty years, the corresponding Cauchy problem for the dissipative Timoshenko system, namely,
\begin{align}\label{Dissipative-Timoshenko}
\begin{cases}
\partial_t^2w-\partial_x(\partial_xw-\psi)=0,&x\in\mb{R},\ t>0,\\
\partial_t^2\psi-a^2\partial_x^2\psi-(\partial_xw-\psi)+\partial_t\psi=0,&x\in\mb{R},\ t>0,\\
(w,\partial_tw)(0,x)=(w_0,w_1)(x),&x\in\mb{R},\\
(\psi,\partial_t\psi)(0,x)=(\psi_0,\psi_1)(x),&x\in\mb{R},
\end{cases}
\end{align}
has been deeply studied. To begin with, via the energy term $U=U(t,x)$ such that
\begin{align}\label{Good-unknown-Kawashima}
U:=(\partial_xw-\psi,\partial_tw,a\partial_x\psi,\partial_t\psi)^{\mathrm{T}},
\end{align}
the authors of \cite{Ide-Haramoto-Kawashima=2008} employed energy methods in the phase space and the Fourier analysis to derive the regularity-loss type decay estimates
\begin{align}\label{New-01}
\|\partial_x^kU(t,\cdot)\|_{(L^2)^4}\lesssim (1+t)^{-\frac{1}{4}-\frac{k}{2}}\|U_0\|_{(L^1)^4}+\begin{cases}
\mathrm{e}^{-ct}\|\partial_x^kU_0\|_{(L^2)^4}&\mbox{if}\ \ a=1,\\
(1+t)^{-\frac{\ell}{2}}\|\partial_x^{k+\ell}U_0\|_{(L^2)^4}&\mbox{if}\ \ a\neq 1,
\end{cases}
\end{align}
with $k,\ell\in\mb{N}_0$, where $U_0=U_0(x)$ denotes the initial value $U(0,x)$. In other words, for the case of non-equal wave speeds $a\neq 1$ one has to assume the additional $\ell$-regularity on the Cauchy data to obtain polynomial decay estimates. Furthermore, by using asymptotic expansions of eigenvalues and their eigenprojections, some asymptotic profiles of the energy term $U$ were investigated. The study in the framework of weighted $L^1$ data was completed in \cite{Racke-Said=2013}, which leads to faster decay estimates. Subsequently, a refined energy method with modified Lyapunov functions was proposed by \cite{Mori-Kawashima=2014}, which allows that the dissipative energy term completely matches with the eigenvalues if $a\neq 1$. Employing the Carlson inequality associated with higher order energy methods, some $L^1$ estimates of $U(t,\cdot)$ with the weighted data were recently derived by \cite{Guesmia-Messaoudi=2023}. 

For corresponding nonlinear Cauchy problems to \eqref{Dissipative-Timoshenko}, we refer the interested reader to \cite{Ide-Kawashima=2008,Racke-Said=2013,Mori-Xu-Kawashima=2015,Xu-Mori-Kawashima=2015} and references therein. Among them, the paper \cite{Racke-Said=2013} studied the following semilinear dissipative Timoshenko system with the equal wave speeds:
\begin{align}\label{Nonlinear-Dissipative-Timoshenko}
	\begin{cases}
		\partial_t^2w^N-\partial_x(\partial_xw^N-\psi^N)=0,&x\in\mb{R},\ t>0,\\
		\partial_t^2\psi^N-\partial_x^2\psi^N-(\partial_xw^N-\psi^N)+\partial_t\psi^N=|\psi^N|^p,&x\in\mb{R},\ t>0,\\
		(w^N,\partial_tw^N)(0,x)=(w_0^N,w_1^N)(x),&x\in\mb{R},\\
		(\psi^N,\partial_t\psi^N)(0,x)=(\psi_0^N,\psi_1^N)(x),&x\in\mb{R},
	\end{cases}
\end{align}
with $p>1$, which is a corresponding nonlinear model to \eqref{Dissipative-Timoshenko} carrying the power nonlinearity with respect to the rotation angle. By weighted energy methods (introduced by \cite{Todorova-Yordanov=2001,Ikehata=2004,Ikehata-Inoue=2008}), the authors of \cite{Racke-Said=2013} demonstrated the global in time existence of small data solution when $p>12$ based on the corresponding energy term $U^N$, which is the same as \eqref{Good-unknown-Kawashima} with the superscript ``$N$''. To use some better decay estimates for this energy term, the additional assumption $U^N_0\in (L^{1,1})^4$ with $\int_{\mb{R}}U_0^N(x)\,\mathrm{d}x=0$ were taken. In \cite[Remark 7.2]{Racke-Said=2013}, they proposed an interesting open question.
\begin{center}
\emph{Does local in time solutions globally exist or blow up in the case $1<p\leqslant 12$?}
\end{center}
To understand this question and nominate a candidate for the critical exponent, formally taking $\partial_xw^{N}\equiv\psi^N$, the semilinear hyperbolic coupled system \eqref{Nonlinear-Dissipative-Timoshenko} is reduced to the semilinear classical damped wave equation
\begin{align}\label{Semilinear-Damped-Waves}
	\begin{cases}
		\partial_{t}^2\phi^N-\partial_x^2\phi^N+\partial_t\phi^N=|\phi^N|^p,&x\in\mb{R},\ t>0,\\
		(\phi^N,\partial_t\phi^N)(0,x)=(\phi^N_0,\phi^N_1)(x),&x\in\mb{R}.
	\end{cases}
\end{align}
It is well-known that the Fujita exponent 
\begin{align*}
	p_{\mathrm{Fuj}}(n):=1+\frac{2}{n}\ \ \mbox{with the dimension }n\in\mb{N}_+
\end{align*} 
discovered in \cite{Fujita=1966} for the semilinear heat equation, is the critical exponent for the semilinear damped wave equation \eqref{Semilinear-Damped-Waves} in $\mb{R}^n$ thanks to the diffusion phenomenon, where we refer the interested reader to \cite{Li-Zhou=1995,Todorova-Yordanov=2001,Zhang=2001,Ikeh-Tani-2005} and references therein for its detailed explanation. Hence, it is reasonable to conjecture that the critical exponent, namely, the threshold value $p=p_{\mathrm{Fuj}}(1)=3$ separates the blow-up range $1<p\leqslant p_{\mathrm{Fuj}}(1)$ from the small data solutions' global existence range $p>p_{\mathrm{Fuj}}(1)$ to the semilinear dissipative Timoshenko system \eqref{Nonlinear-Dissipative-Timoshenko}.  In the moment there is no answer to this question, that is, no result from the blow-up side and no result from the global in time existence of small data Sobolev solutions side if $1<p\leqslant 12$. As a byproduct of our methodology, we in this manuscript will give a positive answer of global in time existence if $p>p_{\mathrm{Fuj}}(1)=3$.


Concerning other studies on the Timoshenko system with various dissipations in the whole space $\mb{R}$, we refer to \cite{Said-Kasimov=2013,Mori-Kawashima=2014,Cao-Xu=2018,Wang-Xue=2019,Liu-Mao=2024} for the Fourier law of heat conduction (even with an additional frictional damping term), \cite{Racke-Said=2012,Said-Kasimov=2013,Mori-Kawashima=2016,Mori-Racke=2018} for the Cattaneo law of heat conduction, \cite{Said=2015} for the heat conduction of Green-Naghdi theory, \cite{Liu-Kawashima=2012,Khader-Said=2018,Mori=2018,Guesmia-Messaoudi=2023} for the memory-type dissipation, \cite{Soufyane-Said=2014,Racke-Wang-Xue=2017} for two frictional dissipations.

\subsection{Main purposes of this manuscript}
 \hspace{5mm}As we mentioned in the above, there are a lot of related works begun from the past twenty years in terms of the dissipative Timoshenko systems (with different kinds of dissipation) in the whole space $\mb{R}$. Their main focus is the energy terms related to \eqref{Good-unknown-Kawashima} by energy methods or spectral analysis. Consequently, a natural and important question arises.
\begin{center}
	\emph{Whether or not one can describe more detailed information of the transversal displacement $w$\\ and the rotation angle $\psi$ themselves for large time?}
\end{center}
To the best of knowledge of author, this question is open, even for the well-known dissipative Timoshenko system \eqref{Dissipative-Timoshenko}. The investigation of qualitative properties of the solutions $w,\psi$ themselves not only is significant for us to understand some underlying physical phenomena but also contribute to study global in time behavior for some corresponding nonlinear models (e.g. semilinear dissipative Timoshenko systems with $|\psi|^p$ in \cite{Racke-Said=2013,Racke-Wang-Xue=2017,Wang-Xue=2019} and our second purpose of this paper). In the present paper, we will partly give answers to the above question by deriving large time behavior for $w$ and $\psi$ to the dissipative Timoshenko system \eqref{Dissipative-Timoshenko}.

We stress that the study of $w$ and $\psi$ themselves is not simply a generalization of those for the energy terms. Recalling the pioneering paper \cite{Ide-Haramoto-Kawashima=2008} (see also \cite[Section 6]{Ueda-Duan-Kawashima=2012}), the standard energy term $U$ defined in \eqref{Good-unknown-Kawashima} decay polynomially due to the following pointwise estimate in the Fourier space:
\begin{align}\label{Point-wise-Kawashima}
|\widehat{U}|\lesssim\mathrm{e}^{-c\rho(|\xi|)t}|\widehat{U}_0|\ \ \mbox{with}\ \ \rho(|\xi|):=\begin{cases}
\displaystyle{\frac{|\xi|^2}{1+|\xi|^2}}&\mbox{if}\ \ a=1,\\[1em]
\displaystyle{\frac{|\xi|^2}{(1+|\xi|^2)^2}}&\mbox{if}\ \ a\neq1.
\end{cases}
\end{align}
Nevertheless, the solutions $w$ and $\psi$ themselves are not contained independently in the energy term $U$. It means that asymptotic behavior of $w,\psi$ is not a 
forthright consequence of \eqref{New-01}. For another, if one applies the sharp pointwise estimate \eqref{Point-wise-Kawashima} leading to
\begin{align}\label{New-02}
\chi_{\intt}(\xi)|\xi|\,|\widehat{\psi}|\lesssim\chi_{\intt}(\xi)\,\mathrm{e}^{-c|\xi|^2t}\left(|i\xi\widehat{w}_0-\widehat{\psi}_0|+|\widehat{w}_1|+a|\xi\widehat{\psi}_0|+|\widehat{\psi}_1|\right),
\end{align}
by directly dividing $|\xi|$ to estimate $\psi(t,\cdot)$ in the $L^2$ norm with the $L^1$ data (e.g. $\psi_1\in L^1$ or $\widehat{\psi}_1\in L^{\infty}$), the next unbounded term occurs:
\begin{align*}
\left\|\chi_{\intt}(\xi)|\xi|^{-1}\,\mathrm{e}^{-c|\xi|^2t}\right\|_{L^2}=C\left(\int_0^{\varepsilon_0}|\xi|^{-2}\,\mathrm{e}^{-2c|\xi|^2t}\,\mathrm{d}|\xi|\right)^{1/2}
\end{align*}
due to the strong singularity $|\xi|^{-2}$ as $|\xi|\to 0$.

Our first main goal in the present manuscript is to study large time asymptotic behavior of $w$ and $\psi$ to the dissipative Timoshenko system \eqref{Dissipative-Timoshenko}. To overcome some difficulties from the energy term $U$ (stated in the last paragraph), in Section \ref{Section_Fourier_Space} we reduce the hyperbolic coupled system \eqref{Dissipative-Timoshenko} to the fourth order hyperbolic scalar equations with respect to $w$ and $\psi$, independently, which recovers some oscillations to compensate strong singularities from \eqref{New-02}. Then, by using the WKB analysis and the Fourier analysis, in Section \ref{Section_L2-norm} we characterize the new optimal growth estimates
\begin{align*}
	\|w(t,\cdot)\|_{L^2}\approx t^{\frac{3}{4}} \ \ \mbox{and}\ \ \|\psi(t,\cdot)\|_{L^2}\approx t^{\frac{1}{4}}
\end{align*}
for large time $t\gg1$, provided that $\int_{\mb{R}}w_1(x)\,\mathrm{d}x\neq 0$. Furthermore, large time asymptotic profiles $w^{\mathrm{pf}}=w^{\mathrm{pf}}(t,x)$ and $\psi^{\mathrm{pf}}=\psi^{\mathrm{pf}}(t,x)$ of the diffusion plate type are introduced in \eqref{w-pf} and \eqref{psi-pf}, respectively, such that
\begin{align*}
	\lim\limits_{t\to+\infty}\|w(t,\cdot)-w^{\mathrm{pf}}(t,\cdot)\|_{L^2}=0 \ \
	\mbox{and}\ \ \lim\limits_{t\to+\infty}\|\psi(t,\cdot)-\psi^{\mathrm{pf}}(t,\cdot)\|_{L^2}=0.
\end{align*} 
As a byproduct of these profiles, we explain the construction of the good energy unknown $U$, especially, the hidden cancellation mechanism in the shearing stress $\partial_xw-\psi$ in Remark \ref{Rem-Not-Contradict}.

As an application of our results, the second main goal is to study the semilinear dissipative Timoshenko system \eqref{Nonlinear-Dissipative-Timoshenko}. Applying suitable $(L^2\cap L^1)-\dot{H}^k$ and $L^2-\dot{H}^k$ a priori estimates and the Banach fixed point argument (motivated by \cite{D-R=2014,Ebert-Reissig=2018}), in Section \ref{Section-GESDS} we demonstrate the global in time existence result of Sobolev solution when $p>p_{\mathrm{Fuj}}(1)$. It partly gives a positive answer to the open problem proposed in \cite[Remark 7.2]{Racke-Said=2013}. We also conjecture that by our philosophy the restriction on the power $p$ of the semilinear Timoshenko system with various dissipations in \cite{Racke-Wang-Xue=2017,Wang-Xue=2019} can be sharply improved.

\paragraph{\large Notation} The constants $c$ and $C$ may be changed from line to line but are independent of the time variable. We write $f\lesssim g$ if there exists a positive constant $C$ such that $f\leqslant Cg$, analogously for $f\gtrsim g$. The sharp relation $f\approx g$ holds if and only if $g\lesssim f\lesssim g$. We take the following zones of the Fourier space:
\begin{align*}
	\ml{Z}_{\intt}(\varepsilon_0):=\{|\xi|\leqslant\varepsilon_0\ll1\}, \ \ 
	\ml{Z}_{\bdd}(\varepsilon_0,N_0):=\{\varepsilon_0\leqslant |\xi|\leqslant N_0\},\ \ 
	\ml{Z}_{\extt}(N_0):=\{|\xi|\geqslant N_0\gg1\}.
\end{align*}
Moreover, the cut-off functions $\chi_{\intt}(\xi),\chi_{\bdd}(\xi),\chi_{\extt}(\xi)\in \mathcal{C}^{\infty}$ having their supports in the corresponding zones $\ml{Z}_{\intt}(\varepsilon_0)$, $\ml{Z}_{\bdd}(\varepsilon_0/2,2N_0)$ and $\ml{Z}_{\extt}(N_0)$, respectively, such that
\begin{align*}
	\chi_{\bdd}(\xi)=1-\chi_{\intt}(\xi)-\chi_{\extt}(\xi)\ \ \mbox{for all}\ \ \xi \in \mb{R}.
\end{align*}
The symbols of differential operators $|D|$ and $\langle D\rangle $ are denoted by $|\xi|$ and $\langle \xi\rangle$, respectively, where $\langle \xi\rangle^2:=1+|\xi|^2$ is  the Japanese bracket. Let us recall the $\gamma$-weighted $L^1$ space that is
\begin{align*}
	L^{1,\gamma}:=\left\{f\in L^1 \ \big|\ \|f\|_{L^{1,\gamma}}:=\int_{\mb{R}}(1+|x|)^\gamma|f(x)|\,\mathrm{d}x<+\infty \right\}\ \ \mbox{with}\ \ \gamma\in\mb{N}_0,
\end{align*}
so that $\|f\|_{L^1}\leqslant\|f\|_{L^{1,\gamma}}$. The means of a summable function $f$ are denoted by
\begin{align*}
	 P_f:=\int_{\mb{R}}f(x)\,\mathrm{d}x	\ \ \mbox{and}\ \  Q_f:=\int_{\mb{R}}xf(x)\,\mathrm{d}x.
\end{align*}
Let us introduce the initial data space for the sake of convenient
\begin{align*}
	\ml{D}_{\gamma}^s:=H^s\cap L^{1,\gamma} \ \ \mbox{with}\ \ s\in\mb{R},\ \gamma\in\mb{N}_0,
\end{align*}
which is the $s$-regular Sobolev space with the additional $\gamma$-weighted $L^1$ integrability, in particular, $\ml{D}_0^s=H^s\cap L^1$.

\section{Main results}\label{Section_Main_Results}\setcounter{equation}{0}
\subsection{Large time behavior for the linear dissipative Timoshenko system}
\hspace{5mm}Before stating our results on asymptotic behavior, let us introduce the auxiliary function via the Fourier transform as follows:
\begin{align*}
G(t,x):=\ml{F}^{-1}_{\xi\to x}\left(  \frac{\sin(c_a|\xi|^2t)}{c_a|\xi|^2}\,\mathrm{e}^{-\frac{1}{2}|\xi|^2t}\right)\ \ \mbox{carrying}\ \ c_a:=\frac{\sqrt{4a^2-1}}{2}>0,
\end{align*}
in which $G(t,x)$ is the so-called diffusion plate function since
\begin{itemize}
	\item its first component
	\begin{align*}
	\ml{K}_{\mathrm{dif}}(t,x):=\ml{F}^{-1}_{\xi\to x}\left(\mathrm{e}^{-\frac{1}{2}|\xi|^2t}\right)
	\end{align*}
is the kernel of the diffusion equation $\partial_tv-\frac{1}{2}\partial_x^2v=0$, and
	\item its second component
	\begin{align*}
	\ml{K}_{\mathrm{pla}}(t,x):=\ml{F}^{-1}_{\xi\to x}\left(\frac{\sin(c_a|\xi|^2t)}{c_a|\xi|^2}\right)
	\end{align*}
is the second kernel of the plate equation $\partial_t^2v+c_a^2\partial_x^4v=0$.
\end{itemize}
For these reasons, the large time property of $G(t,x)$ inherits partly the well-known decay property of Gaussian kernel $\ml{K}_{\mathrm{dif}}(t,x)$ and the growth property (see \cite[Theorem 1.1]{Ikehata=2024}) of plate kernel $\ml{K}_{\mathrm{pla}}(t,x)$. Then, let us take two functions to be the large time profiles (with the superscript ``$\mathrm{pf}$'')
\begin{align}
w^{\mathrm{pf}}(t,x)&:=G(t,x)P_{w_1}-\partial_x G(t,x) Q_{w_1}-\partial_xG(t,x)P_{\psi_0+\psi_1},\label{w-pf}\\
\psi^{\mathrm{pf}}(t,x)&:=\partial_xG(t,x)P_{w_1}.\label{psi-pf}
\end{align}
Notice that the factor $\partial_xG(t,x)$ includes a weaker singularity (as $|\xi|\to0$) than the one in $G(t,x)$, precisely,
\begin{align*}
\partial_xG(t,x)=i\,\ml{F}_{\xi\to x}^{-1}\left(\mathrm{sgn}(\xi)\frac{\sin(c_a|\xi|^2t)}{c_a|\xi|}\,\mathrm{e}^{-\frac{1}{2}|\xi|^2t} \right).
\end{align*}

For our first result on large time behavior for the transversal displacement $w$, its optimal growth rate $t^{3/4}$ is found. To get the large time convergence result, we next propose $w_1\in L^{1,2}$ to overcome the difficulty from the strong singularity $|\xi|^{-2}$ (if $|\xi|\to0$) in its leading term $G(t,x)P_{w_1}$.
\begin{theorem}\label{Thm-w}
Suppose that the initial data $(w_0,w_1)\in\ml{D}_0^0\times \ml{D}_0^{-1}$ and $(\psi_0,\psi_1)\in \ml{D}_0^{-1}\times\ml{D}_0^{-2}$. Then, the transversal displacement $w$ to the dissipative Timoshenko system \eqref{Dissipative-Timoshenko} with the wave speed $a>1/2$ satisfies the following optimal growth estimate:
	\begin{align*}
t^{\frac{3}{4}}|P_{w_1}|\lesssim 	\|w(t,\cdot)\|_{L^2}\lesssim t^{\frac{3}{4}}\|(w_0,w_1),(\psi_0,\psi_1)\|_{(\ml{D}_0^0\times\ml{D}_0^{-1})\times(\ml{D}_0^{-1}\times\ml{D}_0^{-2})}
	\end{align*}
for large time $t\gg1$, provided that $P_{w_1}\neq0$. Furthermore, assuming $w_1\in L^{1,2}$ as well as $\psi_0,\psi_1\in L^{1,1}$ additionally, and
\begin{align*}
	(w_0,w_1)\in\ml{D}_0^{\ell}\times\ml{D}_0^{\ell-1}\ \ \mbox{and}\ \ (\psi_0,\psi_1)\in\ml{D}_0^{\ell-1}\times\ml{D}_0^{\ell-2}\ \ \mbox{with}\ \ \ell>0\ \ \mbox{if}\ \ a\neq 1,
\end{align*}
then $w$ satisfies the following refined estimate:
\begin{align}\label{Error-w}
	\|w(t,\cdot)-w^{\mathrm{pf}}(t,\cdot)\|_{L^2}\lesssim\begin{cases}
		t^{-\frac{1}{4}}\|(w_0,w_1),(\psi_0,\psi_1)\|_{(\ml{D}_0^0\times\ml{D}_2^{-1})\times(\ml{D}_1^{-1}\times\ml{D}_1^{-2})}&\mbox{if}\ \ a=1,\\
		t^{-\min\left\{\frac{1}{4},\frac{\ell}{2}\right\}}\|(w_0,w_1),(\psi_0,\psi_1)\|_{(\ml{D}_0^{\ell}\times\ml{D}_2^{\ell-1})\times(\ml{D}_1^{\ell-1}\times\ml{D}_1^{\ell-2})}&\mbox{if}\ \ a\neq1,
	\end{cases}
\end{align}
for large time $t\gg1$.
\end{theorem}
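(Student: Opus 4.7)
The plan is first to reduce the coupled system \eqref{Dissipative-Timoshenko} to a scalar equation for $w$ alone: using $\partial_x\psi=\partial_x^2 w-\partial_t^2 w$ from the first equation and substituting into the $\partial_x$-derivative of the second, I obtain the fourth-order hyperbolic scalar equation
\[
\partial_t^4 w+\partial_t^3 w+\partial_t^2 w-(1+a^2)\partial_x^2\partial_t^2 w-\partial_x^2\partial_t w+a^2\partial_x^4 w=0,
\]
with four Cauchy data $w_0$, $w_1$, $\partial_x^2 w_0-\partial_x\psi_0$, $\partial_x^2 w_1-\partial_x\psi_1$. After the Fourier transform the characteristic polynomial $\lambda^4+\lambda^3+\bigl(1+(1+a^2)|\xi|^2\bigr)\lambda^2+|\xi|^2\lambda+a^2|\xi|^4$ factors at $|\xi|=0$ as $\lambda^2(\lambda^2+\lambda+1)$, so two roots perturb from $0$ and two from $(-1\pm i\sqrt{3})/2$; only the former pair governs large-time asymptotics.

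I then carry out the WKB expansion $\lambda_\pm(\xi)=\mu_\pm|\xi|^2+O(|\xi|^4)$ in the small-frequency zone. Balancing the $|\xi|^4$-terms forces $\mu^2+\mu+a^2=0$, so $\mu_\pm=-\tfrac{1}{2}\pm ic_a$ precisely because $a>1/2$; these diffusion-plate eigenvalues generate the factor $\hat{G}(t,\xi)=\sin(c_a|\xi|^2 t)/(c_a|\xi|^2)\,e^{-|\xi|^2 t/2}$. Writing $\hat{w}(t,\xi)=\sum_{j=1}^4 a_j(\xi)e^{\lambda_j(\xi)t}$, solving the Vandermonde system for the $a_j(\xi)$, and Taylor-expanding the initial data as $\hat{w}_j(\xi)=P_{w_j}+i\xi Q_{w_j}+O(|\xi|^2\|w_j\|_{L^{1,2}})$ and $\hat{\psi}_j(\xi)=P_{\psi_j}+O(|\xi|\|\psi_j\|_{L^{1,1}})$, I read off in the small zone
\[
\chi_{\intt}(\xi)\hat{w}(t,\xi)=\chi_{\intt}(\xi)\bigl[\hat{G}(t,\xi)P_{w_1}-i\xi\hat{G}(t,\xi)\bigl(Q_{w_1}+P_{\psi_0+\psi_1}\bigr)\bigr]+R(t,\xi),
\]
which is exactly $\chi_{\intt}\hat{w}^{\mathrm{pf}}$ up to a remainder $R$ that either decays exponentially in $t$ or is of strictly higher order in $|\xi|$. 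The disappearance of $P_{w_0}$ and $P_{\psi_0-\psi_1}$ from the leading profile is the hidden cancellation in the shear stress flagged in the introduction.

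For the $L^2$ bounds I split $\xi$ into the three standard zones. In $\ml{Z}_{\intt}(\varepsilon_0)$ the substitution $\eta=|\xi|\sqrt{t}$ turns $\|\chi_{\intt}\hat{G}(t,\cdot)\|_{L^2}^2$ into $t^{3/2}$ times a finite, positive integral in $\eta$; this yields the upper bound $\lesssim t^{3/4}$ on the leading term, and once $R$ is shown to be of order at most $t^{1/4}$ the matching lower bound $\gtrsim t^{3/4}|P_{w_1}|$ follows from the reverse triangle inequality provided $P_{w_1}\neq 0$. On $\ml{Z}_{\bdd}(\varepsilon_0,N_0)$ all four roots have real part uniformly negative, giving exponential decay. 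On $\ml{Z}_{\extt}(N_0)$ I invoke the pointwise estimate \eqref{Point-wise-Kawashima}, exponential in $t$ when $a=1$ but only of regularity-loss type when $a\neq 1$, which is precisely where the assumed $\ell$-regularity of the data is consumed. For the refined bound \eqref{Error-w}, subtracting $\hat{w}^{\mathrm{pf}}$ in $\ml{Z}_{\intt}$ cancels the most singular term so that the same change of variables produces $t^{-1/4}$ instead of $t^{3/4}$; combined with the bounded- and exterior-zone contributions this gives $\min\{1/4,\ell/2\}$.

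The main obstacle will be the bookkeeping in the WKB step: computing the Vandermonde coefficients $a_j(\xi)$, which for the oscillatory modes carry a $|\xi|^{-2}$ singularity that is exactly what manufactures the kernel $\hat{G}$, to sufficient order in $|\xi|$, and verifying that their combination with the Taylor-expanded Cauchy data collapses to precisely the three means $P_{w_1}$, $Q_{w_1}$, $P_{\psi_0+\psi_1}$ rather than some unstructured combination of all eight moments. A secondary technical point is that because $\hat{G}$ is singular as $|\xi|^{-2}$ at the origin, the remainder $R$ must have matching vanishing in $|\xi|$ to be $L^2$-integrable with the right rate; this explains why $w_1\in L^{1,2}$ (so that $\hat{w}_1-P_{w_1}-i\xi Q_{w_1}=O(|\xi|^2)$ pointwise) and $\psi_0,\psi_1\in L^{1,1}$ (so that $\hat{\psi}_j-P_{\psi_j}=O(|\xi|)$) are the sharp weighted assumptions, each compensating exactly one power of the plate singularity.
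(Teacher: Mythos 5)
Your overall architecture matches the paper closely — scalar reduction of \eqref{Dissipative-Timoshenko} to the fourth-order equation in $w$, WKB expansion of the roots in $\ml{Z}_{\intt}$, the Vandermonde/Cramer representation and the substitution $\eta=|\xi|\sqrt{t}$ — and you have correctly identified the diffusion-plate factor $\widehat{\ml{G}}$ and the role of the weighted $L^1$ assumptions. There are, however, two places where your proposal as written would yield a strictly weaker statement than Theorem \ref{Thm-w}.

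First, the exterior zone. You propose to invoke \eqref{Point-wise-Kawashima} on $\ml{Z}_{\extt}(N_0)$, but that estimate is for the energy vector $U=(\partial_xw-\psi,\partial_tw,a\partial_x\psi,\partial_t\psi)^{\mathrm{T}}$, not for $w$ itself. To recover $\widehat{w}$ from $\widehat{U}$ you must divide by $|\xi|$, which for high frequencies costs one derivative in $\psi_0,\psi_1$: you end up controlling $\chi_{\extt}|\widehat{w}|$ by $|\widehat{\psi}_0|+|\xi|^{-1}|\widehat{\psi}_1|$, i.e.\ you would need $\psi_0\in L^2$, $\psi_1\in H^{-1}$, whereas the theorem only assumes $\psi_0\in H^{-1}$, $\psi_1\in H^{-2}$. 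The paper avoids this loss by staying with the scalar representation on $\ml{Z}_{\extt}$ as well: the Vandermonde coefficient in front of the shifted datum $\widehat{w}_2=-|\xi|^2\widehat{w}_0-i\xi\widehat{\psi}_0$ carries an extra $|\xi|^{-2}$ at large frequency, producing the sharper bound $\chi_{\extt}|\widehat{w}|\lesssim(|\widehat{w}_0|+|\xi|^{-1}|\widehat{w}_1|+|\xi|^{-1}|\widehat{\psi}_0|+|\xi|^{-2}|\widehat{\psi}_1|)\,\mathrm{e}^{-ct}$ (or $\mathrm{e}^{-c|\xi|^{-2}t}$). You should run the same Cramer computation for $|\xi|\geqslant N_0$ rather than fall back on the energy estimate.

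Second, the lower bound. You propose to obtain $\|w(t,\cdot)\|_{L^2}\gtrsim t^{3/4}|P_{w_1}|$ by subtracting $w^{\mathrm{pf}}$ and applying the reverse triangle inequality once $R=\widehat{w}-\widehat{w}^{\mathrm{pf}}$ is shown to be $O(t^{1/4})$. But $w^{\mathrm{pf}}$ contains $Q_{w_1}$ and $P_{\psi_0+\psi_1}$, and controlling $R$ at rate $t^{-1/4}$ in $\ml{Z}_{\intt}$ requires the Taylor expansion of $\widehat{w}_1$ to second order, hence $w_1\in L^{1,2}$. The theorem's growth estimate, by contrast, is stated under $w_1\in L^1$ only. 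The paper handles this by using the cruder intermediate profile $G(t,\cdot)P_{w_1}$, proving $\|\ml{G}(t,|D|)w_1-G(t,\cdot)P_{w_1}\|_{L^2}=o(t^{3/4})$ through a split of the convolution at $|y|=t^{1/8}$ together with $\lim_{t\to\infty}\int_{|x|\geqslant t^{1/8}}|w_1|\,\mathrm{d}x=0$, which uses nothing beyond $w_1\in L^1$. Without this truncation argument your lower bound would inherit the stronger moment hypotheses that are meant to be reserved for the refined estimate \eqref{Error-w}.
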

\begin{remark}
If we simply consider $w^{\mathrm{sim}}(t,x):=G(t,x)P_{w_1}$ to be an asymptotic profile, then it is not difficult to prove $\|w(t,\cdot)-w^{\mathrm{sim}}(t,\cdot)\|_{L^2}\approx t^{1/4}$ as $t\gg1$ under a weaker assumption $w_1\in L^{1,1}$, which does not decay to zero as $t\to+\infty$. For our purpose of large time approximation, we construct the higher order profile $w^{\mathrm{pf}}(t,x)$ containing the initial data $\psi_0,\psi_1$.
\end{remark}

For the rotation angle $\psi$, its optimal (slow) growth rate $t^{1/4}$ is found. The weaker assumption $w_1\in L^{1,1}$ is proposed to get the large time convergence result thanks to the weaker singularity $|\xi|^{-1}$ (if $|\xi|\to 0$) in its leading term $\partial_xG(t,x)P_{w_1}$.
\begin{theorem}\label{Thm-psi}
	Suppose that the initial data $(w_0,w_1)\in\ml{D}_0^{-1}\times\ml{D}_0^{-2}$ and $(\psi_0,\psi_1)\in \ml{D}_0^0\times \ml{D}_0^{-1}$. Then, the rotation angle  $\psi$ to the dissipative Timoshenko system \eqref{Dissipative-Timoshenko} with the wave speed $a>1/2$ satisfies the following optimal growth estimate:
\begin{align*}
t^{\frac{1}{4}}|P_{w_1}|\lesssim	\|\psi(t,\cdot)\|_{L^2}\lesssim t^{\frac{1}{4}}\|(w_0,w_1),(\psi_0,\psi_1)\|_{(\ml{D}_0^{-1}\times\ml{D}_0^{-2})\times(\ml{D}_0^0\times \ml{D}_0^{-1})}
\end{align*}
for large time $t\gg1$, provided that $P_{w_1}\neq0$. Furthermore, assuming $w_1\in L^{1,1}$ additionally, and
\begin{align*}
(w_0,w_1)\in\ml{D}_0^{\ell-1}\times\ml{D}_0^{\ell-2}\ \ \mbox{and}\ \ (\psi_0,\psi_1)\in\ml{D}_0^{\ell}\times \ml{D}_0^{\ell-1}\ \ \mbox{with}\ \ \ell>0\ \ \mbox{if}\ \ a\neq 1,
\end{align*}
then $\psi$ satisfies the following refined estimate:
\begin{align*}
	\|\psi(t,\cdot)-\psi^{\mathrm{pf}}(t,\cdot)\|_{L^2}\lesssim \begin{cases}
		t^{-\frac{1}{4}}\|(w_0,w_1),(\psi_0,\psi_1)\|_{(\ml{D}_0^{-1}\times\ml{D}_1^{-2})\times(\ml{D}_0^0\times\ml{D}_0^{-1})}&\mbox{if}\ \ a=1,\\
		t^{-\min\left\{\frac{1}{4},\frac{\ell}{2} \right\}}\|(w_0,w_1),(\psi_0,\psi_1)\|_{(\ml{D}_0^{\ell-1}\times\ml{D}_1^{\ell-2})\times(\ml{D}_0^{\ell}\times\ml{D}_0^{\ell-1})}&\mbox{if}\ \ a\neq1,
	\end{cases}
\end{align*}
for large time $t\gg1$.
\end{theorem}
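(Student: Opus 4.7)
The plan is to mirror the proof of Theorem \ref{Thm-w}, with the diffusion-plate kernel $G$ replaced by its derivative $\partial_x G$. Since $\partial_x G$ carries only the weaker singularity $|\xi|^{-1}$ near the origin, the analysis for $\psi$ is strictly easier than for $w$ and requires only $w_1\in L^{1,1}$. First I would pass to the Fourier side and, following the reduction of Section \ref{Section_Fourier_Space}, decouple the hyperbolic system into a fourth-order scalar symbol whose action on the Fourier side yields an explicit representation of $\widehat{\psi}(t,\xi)$ as a linear combination of four fundamental solutions applied to $(\widehat{w}_0,\widehat{w}_1,\widehat{\psi}_0,\widehat{\psi}_1)$.

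Next, in the low-frequency zone $\ml{Z}_{\intt}(\varepsilon_0)$ I would carry out a WKB expansion of the four characteristic roots. Two of them are of parabolic-plate type,
\begin{align*}
\lambda_\pm(\xi)=-\tfrac{1}{2}|\xi|^2\pm ic_a|\xi|^2+O(|\xi|^4),
\end{align*}
producing the kernel $\sin(c_a|\xi|^2t)\,\mathrm{e}^{-|\xi|^2t/2}/(c_a|\xi|^2)$, while the remaining two roots have uniformly negative real parts and contribute exponentially small terms. Inspection of the corresponding eigenprojections should show that the amplitude multiplying $\widehat{w}_1$ is the only one which does not vanish at $\xi=0$, while the multipliers in front of $\widehat{w}_0$, $\widehat{\psi}_0$, $\widehat{\psi}_1$ each carry an extra factor $|\xi|$. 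The leading contribution is then
\begin{align*}
i\,\mathrm{sgn}(\xi)\,\frac{\sin(c_a|\xi|^2t)}{c_a|\xi|}\,\mathrm{e}^{-\frac{1}{2}|\xi|^2t}\,\widehat{w}_1(\xi),
\end{align*}
which, after replacing $\widehat{w}_1(\xi)$ by $P_{w_1}$ using $|\widehat{w}_1(\xi)-P_{w_1}|\lesssim|\xi|\,\|w_1\|_{L^{1,1}}$, matches $\widehat{\psi^{\mathrm{pf}}}$ exactly. The Plancherel cost of this replacement, together with the $O(|\xi|^4)$ WKB remainder, is controlled by $\int_0^{\varepsilon_0}\mathrm{e}^{-c|\xi|^2t}\,d|\xi|\approx t^{-1/2}$ against bounded amplitudes, producing the target $t^{-1/4}$ rate in the low-frequency part.

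For the bounded zone $\ml{Z}_{\bdd}$ all four roots lie uniformly in the open left half-plane, yielding an exponential-in-$t$ bound. For the exterior zone $\ml{Z}_{\extt}(N_0)$ I would invoke the pointwise estimate \eqref{Point-wise-Kawashima}: if $a=1$ the factor $\mathrm{e}^{-ct}$ delivers the desired decay with no regularity loss, whereas if $a\neq1$ the regularity-loss factor $\mathrm{e}^{-ct/|\xi|^2}$ forces one to trade $\ell$ derivatives of the data for the polynomial gain $t^{-\ell/2}$, which is the source of $\min\{1/4,\ell/2\}$ in the statement. Summing the three zonal contributions and applying Plancherel yields the stated upper bound on $\|\psi(t,\cdot)-\psi^{\mathrm{pf}}(t,\cdot)\|_{L^2}$. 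The lower bound $\|\psi(t,\cdot)\|_{L^2}\gtrsim t^{1/4}|P_{w_1}|$ then follows from the triangle inequality and the explicit computation
\begin{align*}
\|\psi^{\mathrm{pf}}(t,\cdot)\|_{L^2}^2\approx|P_{w_1}|^2\int_0^{\varepsilon_0}|\xi|^{-2}\sin^2(c_a|\xi|^2t)\,\mathrm{e}^{-|\xi|^2t}d|\xi|\approx|P_{w_1}|^2\,t^{1/2},
\end{align*}
obtained via the substitution $\eta=|\xi|^2t$; the refined estimate shows that the error is $o(t^{1/4})$, so the profile dominates when $P_{w_1}\neq0$.

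The main obstacle will be the WKB step: one must expand the eigenprojections of the scalar fourth-order symbol precisely enough near $\xi=0$ to verify simultaneously that the leading amplitude acting on $\widehat{w}_1$ is exactly the claimed expression \emph{and} that the amplitudes acting on the other three initial data vanish at $\xi=0$. This cancellation is what tames the otherwise dangerous $|\xi|^{-1}$ factor in the solution formula and reduces the growth from an unbounded rate to the mild $t^{1/4}$ -- the very ``hidden cancellation'' that the author highlights in Remark \ref{Rem-Not-Contradict}.
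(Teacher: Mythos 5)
Your overall plan—reduce to the fourth-order scalar symbol, WKB-expand the roots, identify the low-frequency profile $i\,\mathrm{sgn}(\xi)\,\sin(c_a|\xi|^2t)\,\mathrm{e}^{-|\xi|^2t/2}/(c_a|\xi|)\cdot\widehat{w}_1$, and trade regularity for decay at high frequencies when $a\neq1$—is the same route the paper takes (Propositions \ref{Prop-psi-small}, \ref{Prop-psi-large}, Lemma \ref{Lemma-optimal}), and the upper bound and refined estimate follow as you sketch. Two caveats are worth recording. First, the claim that ``the amplitudes acting on the other three initial data vanish at $\xi=0$'' is the substantial step: verifying it requires tracking the cancellations in $\det(\mb{V}_2^{(1)})\pm\det(\mb{V}_2^{(2)})$ explicitly, which is where the paper's Proposition~\ref{Prop-psi-small} invests most of its effort. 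The proposal treats this as an inspection but it is the heart of the argument.

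Second, and more seriously, your derivation of the lower bound $\|\psi(t,\cdot)\|_{L^2}\gtrsim t^{1/4}|P_{w_1}|$ uses the refined estimate $\|\psi(t,\cdot)-\psi^{\mathrm{pf}}(t,\cdot)\|_{L^2}\lesssim t^{-1/4}$, which in your scheme rests on the pointwise bound $|\widehat{w}_1(\xi)-P_{w_1}|\lesssim|\xi|\,\|w_1\|_{L^{1,1}}$. That bound requires $w_1\in L^{1,1}$. But the lower bound in the theorem is asserted under the weaker hypothesis $w_1\in\ml{D}_0^{-2}$, i.e.\ only $w_1\in L^1$, and the $L^{1,1}$ assumption is introduced \emph{after} the optimal growth estimate, only for the refined estimate. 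As stated, your argument proves the lower bound for $L^{1,1}$ data only. The paper avoids this by proving the weaker but hypothesis-light bound $\|\psi(t,\cdot)-\partial_xG(t,\cdot)P_{w_1}\|_{L^2}=o(t^{1/4})$ directly from $w_1\in L^1$: it splits the convolution $\ml{G}(t,|D|)\,\mathrm{d}_x w_1-\partial_xG(t,\cdot)P_{w_1}$ in physical space into $|y|\leqslant t^{1/8}$ (handled via the mean value theorem and the $L^2$ norm of $\partial_x^2 G$, giving $t^{1/8}\cdot O(t^{-1/4})$) and $|y|\geqslant t^{1/8}$ (handled via $\|\partial_xG(t,\cdot)\|_{L^2}\cdot\int_{|x|\geqslant t^{1/8}}|w_1|\,\mathrm{d}x=O(t^{1/4})\cdot o(1)$, using dominated convergence). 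You should replace the $L^{1,1}$-based replacement step with this soft $o(t^{1/4})$ argument when establishing the lower bound, and reserve the quantified $|\widehat{w}_1-P_{w_1}|\lesssim|\xi|\|w_1\|_{L^{1,1}}$ estimate for the refined estimate alone.
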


To end this subsection, we address some comments on both results.
\begin{remark}
The regularity-loss phenomenon does not occur in the estimates of $w,\psi$ due to the derived growth estimates (instead of decay estimates). Moreover, it is not surprising that $\psi$ grows slower than $w$ because of the frictional damping term $\partial_t\psi$ in the dissipative Timoshenko system \eqref{Dissipative-Timoshenko}. These results do not contradict to \cite[Theorem 2.1]{Ide-Haramoto-Kawashima=2008} and we will explain its detail in Remark \ref{Rem-Not-Contradict}.
\end{remark}

\begin{remark}
If one proposes $w_1\in L^{1,1}$ with $P_{w_1}=0$, in the case of equal wave speeds $a=1$ we may get a slower growth estimate for $w$ and a faster decay estimate for $\psi$, i.e. \eqref{Linear-01} and \eqref{Linear-02}, than those in Theorems \ref{Thm-w} and \ref{Thm-psi}, respectively. These estimates are used in the nonlinear problem \eqref{Nonlinear-Dissipative-Timoshenko}.
\end{remark}

\begin{remark}
In order to justify convergence results as $t\to+\infty$, i.e. some decay estimates for the error terms, we have to propose the regularity-loss effect with $\ell>0$ for the initial data in the case of non-equal wave speeds $a\neq 1$.
\end{remark}

\begin{remark}\label{Rem-Not-Contradict}
We contribute to explain the good unknown \eqref{Good-unknown-Kawashima} introduced in \cite[Formula (2.1)]{Ide-Haramoto-Kawashima=2008}, where actually there is a cancellation mechanism. Precisely, the leading term of $\partial_xw$ compensates the one of $\psi$, then $\partial_xw-\psi$ gets faster decay estimates than $\partial_xw$ and $\psi$. By using asymptotic expansions of $w$ and $\psi$ in the Fourier space (see Subsections \ref{Subsection-transversal displacement} and \ref{Subsection-Rotational-Angle}) we discover a cancellation stated in Remark \ref{Rem-Fourier-Cancellation}. From \eqref{oo-Err-01}, we derive
\begin{align*}
\left\|\chi_{\intt}(D)\big(\partial_xw(t,\cdot)-\psi(t,\cdot)\big)\right\|_{L^2}&\lesssim \left\|\chi_{\intt}(\xi)\,\mathrm{e}^{-c|\xi|^2t}\right\|_{L^2}\|(w_0,w_1),(\psi_0,\psi_1)\|_{(L^1\times L^1)\times (L^1\times L^1)}\\
&\lesssim (1+t)^{-\frac{1}{4}}\|(w_0,w_1),(\psi_0,\psi_1)\|_{(L^1\times L^1)\times (L^1\times L^1)},
\end{align*}
and, from Propositions \ref{Prop-w-large} as well as \ref{Prop-psi-large},
\begin{align*}
&\left\|\big(1-\chi_{\intt}(D)\big)\big(\partial_xw(t,\cdot)-\psi(t,\cdot)\big)\right\|_{L^2}\\
&\lesssim\begin{cases}
\mathrm{e}^{-ct}\|(w_0,w_1),(\psi_0,\psi_1)\|_{(H^1\times L^2)\times (L^2\times H^{-1})}&\mbox{if}\ \ a=1,\\
(1+t)^{-\frac{\ell}{2}}\|(w_0,w_1),(\psi_0,\psi_1)\|_{(H^{\ell+1}\times H^{\ell})\times (H^{\ell}\times H^{\ell-1})}&\mbox{if}\ \ a\neq 1.
\end{cases}
\end{align*}
Analogously, one can derive
\begin{align*}
&\|\partial_xw(t,\cdot)-\psi(t,\cdot)\|_{L^2}+\|\partial_tw(t,\cdot)\|_{L^2}+\|a\partial_x\psi(t,\cdot)\|_{L^2}+\|\partial_t\psi(t,\cdot)\|_{L^2}\\
&\lesssim
 (1+t)^{-\frac{1}{4}}\|(w_0,w_1),(\psi_0,\psi_1)\|_{(L^1\times L^1)\times (L^1\times L^1)}\\
 &\quad+\begin{cases}
	\mathrm{e}^{-ct}\|(w_0,w_1),(\psi_0,\psi_1)\|_{(H^1\times L^2)\times (H^1\times L^2)}&\mbox{if}\ \ a=1,\\
	(1+t)^{-\frac{\ell}{2}}\|(w_0,w_1),(\psi_0,\psi_1)\|_{(H^{\ell+1}\times H^{\ell})\times (H^{\ell+1}\times H^{\ell})}&\mbox{if}\ \ a\neq 1,
	\end{cases}
\end{align*}
which exactly coincides with the decay estimate of $U(t,\cdot)$ in the $L^2$ norm (see \cite[Theorem 2.1]{Ide-Haramoto-Kawashima=2008}). In conclusion, these theorems not only  do not contradict the classical results, but also discover a hidden cancellation mechanism.
\end{remark}

\begin{remark}
The philosophy of deriving optimal large time behavior of the transversal displacement and the rotation angle for the dissipative Timoshenko system \eqref{Dissipative-Timoshenko} in $\mb{R}$ is based on the reduction (to higher order evolution equations) and the refined Fourier analysis. Thus, we believe that our approach can be widely applied in a large class of dissipative Timoshenko systems with thermal dissipation \cite{Said-Kasimov=2013,Mori-Kawashima=2014}, memory-type dissipation \cite{Liu-Kawashima=2012}, or double dissipations \cite{Racke-Wang-Xue=2017}.
\end{remark}

\subsection{Global in time existence for the semilinear dissipative Timoshenko system}
\hspace{5mm}Let us state the global in time existence result in the super-critical case $p>p_{\mathrm{Fuj}}(1)=3$, which improves \cite[Theorem 6.4 with $p>12$]{Racke-Said=2013} and partly justifies the conjecture in \cite[Remark 7.2]{Racke-Said=2013}. It is an application of Theorems \ref{Thm-w} and \ref{Thm-psi} benefiting from some estimates for the solutions themselves (so we can avoid some losses in estimating the power nonlinearity).

\begin{theorem}\label{Thm-GESDS}
Suppose that the initial data $(w_0^N,w_1^N)\in\ml{D}_0^2\times \ml{D}_1^{1}$ and $(\psi_0^N,\psi_1^N)\in\ml{D}_0^{1}\times\ml{D}_0^{0}$ with $P_{w_1^N}=0$. Let
\begin{align*}
p>p_{\mathrm{Fuj}}(1)=3.
\end{align*}
Then, there exists $\epsilon>0$ such that for all $\|(w_0^N,w_1^N),(\psi_0^N,\psi_1^N)\|_{(\ml{D}_0^2\times \ml{D}_1^{1})\times(\ml{D}_0^{1}\times\ml{D}_0^{0})}\leqslant \epsilon$,
there is a uniquely determined Sobolev solution 
\begin{align*}
(w^N,\psi^N)\in\ml{C}([0,+\infty), H^2)\times\ml{C}([0,+\infty), H^{1})
\end{align*}
to the semilinear dissipative Timoshenko system \eqref{Nonlinear-Dissipative-Timoshenko} with the equal wave speeds. Furthermore, the following sharp estimates hold:
\begin{align*}
\|w^N(t,\cdot)\|_{\dot{H}^k}&\lesssim(1+t)^{\frac{1}{4}-\frac{k}{2}}\|(w_0^N,w_1^N),(\psi_0^N,\psi_1^N)\|_{(\ml{D}_0^2\times \ml{D}_1^{1})\times(\ml{D}_0^{1}\times\ml{D}_0^{0})}\ \ \,\ \mbox{for}\ \ k=0,2,\\
\|\psi^N(t,\cdot)\|_{\dot{H}^k}&\lesssim(1+t)^{-\frac{1}{4}-\frac{k}{2}}\|(w_0^N,w_1^N),(\psi_0^N,\psi_1^N)\|_{(\ml{D}_0^2\times \ml{D}_1^{1})\times(\ml{D}_0^{1}\times\ml{D}_0^{0})}\ \ \mbox{for}\ \ k=0,1.
\end{align*}
\end{theorem}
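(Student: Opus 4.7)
The plan is a Duhamel plus Banach fixed-point scheme, built around the sharp rates on $w^N$ and $\psi^N$ themselves that are supplied by Theorems \ref{Thm-w} and \ref{Thm-psi}. I write $(w^N,\psi^N)=(w^{\mathrm{lin}},\psi^{\mathrm{lin}})+(w^{\mathrm{non}},\psi^{\mathrm{non}})$, where the linear part carries the prescribed initial data and the nonlinear part is the time integral of the linear evolution applied to the forcing data $(0,0,0,|\psi^N(\tau,\cdot)|^p)$ (the nonlinearity appears only in the $\partial_t\psi$-equation, so Duhamel places it in the $\psi_1$-slot). The key observation is that the $w_1$-slot of this forcing data vanishes identically, hence $P_{w_1}=0$ trivially at every $\tau$; this allows us to invoke the refined linear bounds available under the hypothesis $P_{w_1}=0$ announced in the remark following Theorem \ref{Thm-psi}, together with their $L^2\to\dot H^k$ companion estimates (which, in the regime $a=1$, entail no regularity loss because the high-frequency contribution is exponentially damped).

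Introducing the solution space
\begin{align*}
X(T):=\ml{C}([0,T],H^2)\times\ml{C}([0,T],H^1)
\end{align*}
with the weighted norm
\begin{align*}
\|(w,\psi)\|_{X(T)}:=\sup_{t\in[0,T]}\Bigl[\sum_{k=0,2}(1+t)^{\frac{k}{2}-\frac14}\|w(t,\cdot)\|_{\dot H^k}+\sum_{k=0,1}(1+t)^{\frac{k}{2}+\frac14}\|\psi(t,\cdot)\|_{\dot H^k}\Bigr],
\end{align*}
I define the map $\Phi[(w,\psi)]:=(w^{\mathrm{lin}}+w^{\mathrm{non}},\psi^{\mathrm{lin}}+\psi^{\mathrm{non}})$. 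The one-dimensional Gagliardo--Nirenberg inequality $\|\psi\|_{L^q}\lesssim\|\psi\|_{L^2}^{1/q+1/2}\|\psi\|_{\dot H^1}^{1/2-1/q}$ applied at $q=p$ and $q=2p$, combined with the $X(T)$-weights, yields
\begin{align*}
\bigl\||\psi|^p(\tau,\cdot)\bigr\|_{L^1}\lesssim(1+\tau)^{\frac{1-p}{2}}\|(w,\psi)\|_{X(T)}^p,\qquad \bigl\||\psi|^p(\tau,\cdot)\bigr\|_{L^2}\lesssim(1+\tau)^{\frac14-\frac{p}{2}}\|(w,\psi)\|_{X(T)}^p.
\end{align*}
Feeding these bounds into the Duhamel representation reduces every entry of $\|\Phi[(w,\psi)]\|_{X(T)}$ to controlling integrals of the type $\int_0^t(1+t-\tau)^\alpha(1+\tau)^\beta\,d\tau$ with $\beta\in\{(1-p)/2,\,1/4-p/2\}$. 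Splitting $[0,t]=[0,t/2]\cup[t/2,t]$, each such integral is dominated by the target weight $(1+t)^\alpha$ as soon as $\beta<-1$, which in the critical case $\beta=(1-p)/2$ is equivalent to $p>p_{\mathrm{Fuj}}(1)=3$. The Lipschitz estimate for $\Phi[(w,\psi)]-\Phi[(\tilde w,\tilde\psi)]$ follows from $\bigl||a|^p-|b|^p\bigr|\lesssim(|a|^{p-1}+|b|^{p-1})|a-b|$ combined with H\"older's inequality and the same Gagliardo--Nirenberg interpolation, so the identical threshold $p>3$ suffices. Banach's fixed-point theorem applied in a small ball of $X(\infty)$ then produces the unique small global-in-time solution, and the norm bound obtained is precisely the pair of sharp estimates stated in Theorem \ref{Thm-GESDS}.

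The principal obstacle is the growth of $\|w^N(t,\cdot)\|_{L^2}$ at rate $(1+t)^{1/4}$, which has no counterpart in classical semilinear damped-wave analyses where every unknown decays. Accordingly, the Duhamel estimate for $\|w^{\mathrm{non}}(t,\cdot)\|_{L^2}$ must digest an \emph{amplifying} factor $(1+t-\tau)^{1/4}$, and convergence of the resulting time integral forces $\||\psi^N(\tau,\cdot)|^p\|_{L^1}$ to decay strictly faster than $(1+\tau)^{-1}$. Precisely this is delivered by the sharp rate $\|\psi^N(\tau,\cdot)\|_{L^2}\lesssim(1+\tau)^{-1/4}$ of Theorem \ref{Thm-psi}, which is a statement about $\psi^N$ itself rather than about the Kawashima energy variable $U^N$ of \eqref{Good-unknown-Kawashima}. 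Passing from a bound on $U^N$ to one on $\psi^N$ requires ``inverting'' a spatial derivative and therefore loses regularity; it is this loss which is ultimately responsible for the restriction $p>12$ in \cite{Racke-Said=2013}, and its removal here via the direct estimates of Theorems \ref{Thm-w} and \ref{Thm-psi} is what brings the admissible exponent down to the optimal Fujita value.
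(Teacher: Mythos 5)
Your proposal follows essentially the same route as the paper: a Duhamel--plus--Banach-fixed-point scheme in the same weighted space $X(T)$ (the weights $(1+t)^{k/2-1/4}$ on $w$ and $(1+t)^{k/2+1/4}$ on $\psi$ match the paper's $\ml{X}_{T_*}$-norm exactly), with the same two ingredients: the $(L^2\cap L^1)$-to-$\dot H^k$ and $L^2$-to-$\dot H^k$ estimates for the kernels $K_3^w,K_3^\psi$ that carry the forcing from the $\psi_1$-slot, and the one-dimensional Gagliardo--Nirenberg interpolation giving $\||\psi|^p\|_{L^1}\lesssim(1+\tau)^{(1-p)/2}$ and $\||\psi|^p\|_{L^2}\lesssim(1+\tau)^{1/4-p/2}$, which forces $p>3$ via the split $[0,t/2]\cup[t/2,t]$. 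Your observation that the forcing vanishes identically in the $w_1$-slot (so the refined $P_{w_1}=0$ linear bounds apply to the Duhamel term automatically) is exactly what the paper exploits when it derives the kernel estimates (5.8)--(5.11) from (5.6)--(5.7), and your closing discussion of the amplifying $(1+t-\tau)^{1/4}$ factor in the $L^2$-estimate of $w^{\mathrm{non}}$ matches the paper's treatment of $w_{\nlin}$.
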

\begin{remark}
 Comparing with \cite[Theorem 6.4]{Racke-Said=2013} we do not propose the additional weighted integrability $g_0\in L^{1,1}$ with $P_{g_0}=0$ for $g_0=\mathrm{d}_x w_0^N-\psi_0^N,\mathrm{d}_x\psi_0^N,\psi_1^N$ due to our consideration of the solutions themselves instead of a total energy term. Although we just obtain a growth estimate for $\|w^N(t,\cdot)\|_{L^2}$, similarly to Remark \ref{Rem-Not-Contradict} we are able to recover the same decay estimates for an energy term as those in \cite[Theorem 6.4]{Racke-Said=2013}.
\end{remark}
\begin{remark}
As we will see later, the derived growth/decay rates in Theorem \ref{Thm-GESDS} exactly coincide with those of the corresponding linearized model in \eqref{Linear-01} and \eqref{Linear-02}, which verifies the effect of no loss of growth/decay.
\end{remark}

\begin{remark}
According to the connection between the semilinear dissipative Timoshenko system \eqref{Nonlinear-Dissipative-Timoshenko} and the semilinear classical damped wave equation \eqref{Semilinear-Damped-Waves}, we conjecture that the condition $p>p_{\mathrm{Fuj}}(1)$ of the global in time existence result in Theorem \ref{Thm-GESDS} is sharp, namely, in the sub-critical case $1<p<p_{\mathrm{Fuj}}(1)$ and the critical case $p=p_{\mathrm{Fuj}}(1)$ local in time solutions blow up in finite time.
\end{remark}

\section{Asymptotic behavior of solutions in the Fourier space}\label{Section_Fourier_Space}\setcounter{equation}{0}
\subsection{Reduction to fourth order differential models}\label{Subsection-Reduction}
\hspace{5mm}In order to study asymptotic behavior for $w$ and $\psi$, individually, different from \cite{Ide-Haramoto-Kawashima=2008,Racke-Said=2013,Mori-Kawashima=2014,Guesmia-Messaoudi=2023} by constructing the energy term $U$ as \eqref{Good-unknown-Kawashima}, our strategy is to derive asymptotic representations of them in the Fourier space via scalar fourth order differential equations. 

For one thing, by applying the damped wave operator $\partial_t^2-a^2\partial_x^2+\partial_t+I$ with the identity operator $I$ on \eqref{Dissipative-Timoshenko}$_1$ and using \eqref{Dissipative-Timoshenko}$_2$ to eliminate $\psi$ in the resultant equality, one derives the fourth order differential equation for the transversal displacement $w$ as follows:
\begin{align*}
0&=(\partial_t^2-a^2\partial_x^2+\partial_t+I)[(\partial_t^2-\partial_x^2)w+\partial_x\psi]\\
&=\partial_t^4w+\partial_t^3w+[1-(1+a^2)\partial_x^2]\partial_t^2w-\partial_t\partial_x^2w+a^2\partial_x^4w
\end{align*}
with its initial conditions $w_j=w_j(x)$ such that
\begin{align*}
\partial_t^2w(0,x)&=w''_0(x)-\psi'_0(x)=:w_2(x),\\ \partial_t^3w(0,x)&=w''_1(x)-\psi'_1(x)=:w_3(x).
\end{align*}

Analogously, by applying the free wave operator $\partial_t^2-\partial_x^2$ on \eqref{Dissipative-Timoshenko}$_2$ and using \eqref{Dissipative-Timoshenko}$_1$ to eliminate $w$ in the resultant equality, the rotation angle $\psi$ satisfies the following fourth order differential equation:
\begin{align*}
0&=(\partial_t^2-\partial_x^2)[(\partial_t^2-a^2\partial_x^2+\partial_t+I)\psi-\partial_xw]\\
&=\partial_t^4\psi+\partial_t^3\psi+[1-(1+a^2)\partial_x^2]\partial_t^2\psi-\partial_t\partial_x^2\psi+a^2\partial_x^4\psi 
\end{align*}
with its initial conditions $\psi_j=\psi_j(x)$ such that
\begin{align*}
	\partial_t^2\psi(0,x)&=a^2\psi_0''(x)-\psi_0(x)-\psi_1(x)+w'_0(x)=:\psi_2(x),\\
	\partial_t^3\psi(0,x)&=a^2\psi_1''(x)-a^2\psi''_0(x)+\psi_0(x)-w_0'(x)+w'_1(x)=:\psi_3(x).
\end{align*}

By setting $u=w,\psi$ for briefness, the unknown function $u=u(t,x)$ solves
\begin{align}\label{General-Fourth-order-eq}
\begin{cases}
\partial_t^4u+\partial_t^3u+[1-(1+a^2)\partial_x^2]\partial_t^2u-\partial_t\partial_x^2u+a^2\partial_x^4u=0,&x\in\mb{R},\ t>0,\\
(u,\partial_tu,\partial_t^2u,\partial_t^3u)(0,x)=(u_0,u_1,u_2,u_3)(x),&x\in\mb{R},
\end{cases}
\end{align}
with the Cauchy data $u_j=w_j,\psi_j$.
Although $w$ and $\psi$ satisfy the same partial differential equation equipped the hyperbolic (strictly hyperbolic if $a\neq1$) operator
\begin{align*}
\ml{L}_{\mathrm{Tim}}:=\partial_t^4-(1+a^2)\partial_t^2\partial_x^2+a^2\partial_x^4+\partial_t^3-\partial_t\partial_x^2+\partial_t^2,
\end{align*}
 qualitative properties of them may be quite different because of their initial conditions. Thanks to the well-known general well-posedness theory of hyperbolic equations in the whole space (see, for example, \cite[Subsection 3.4]{Ebert-Reissig=2018}), it is easy to conclude
 \begin{align}\label{Well-posedness}
 w,\psi\in \bigcap_{k=0,\dots,3}\ml{C}^k([0,T_0],H^{3-k})\ \ \mbox{for any}\ \ T_0>0,
 \end{align}
provided that $u_k\in H^{3-k}$ for $k=0,\dots,3$, that is $(w_0,w_1),(\psi_0,\psi_1)\in H^3\times H^2$. However, refined asymptotic behavior of $w$ or $\psi$ cannot be obtained by the general theory.

\subsection{Asymptotic expansions of characteristic roots}\label{Subsection-Charcteristic-Roots}
 \hspace{5mm}An application of the partial Fourier transform with respect to the spatial variable $x$ for the Cauchy problem \eqref{General-Fourth-order-eq} yields
\begin{align}\label{General-Fourth-order-eq-Fourier}
\begin{cases}
\mathrm{d}_t^4\widehat{u}+\mathrm{d}_t^3\widehat{u}+[1+(1+a^2)|\xi|^2]\mathrm{d}_t^2\widehat{u}+|\xi|^2\mathrm{d}_t\widehat{u}+a^2|\xi|^4\widehat{u}=0,&\xi\in\mb{R},\ t>0,\\
(\widehat{u},\mathrm{d}_t\widehat{u},\mathrm{d}_t^2\widehat{u},\mathrm{d}_t^3\widehat{u})(0,\xi)=(\widehat{u}_0,\widehat{u}_1,\widehat{u}_2,\widehat{u}_3)(\xi),&\xi\in\mb{R}, 
\end{cases}
\end{align}
whose characteristic equation is given by
\begin{align}\label{quartic-eq}
\lambda^4+\lambda^3+[1+(1+a^2)|\xi|^2]\lambda^2+|\xi|^2\lambda+a^2|\xi|^4=0.
\end{align}
Its discriminant is
\begin{align*}
0<\triangle_{\mathrm{Dis}}(|\xi|)=\begin{cases}
3(4a^2-1)|\xi|^4+O(|\xi|^6)&\mbox{if}\ \ \xi\in\ml{Z}_{\intt}(\varepsilon_0),\\
16a^2(a^2-1)^4|\xi|^{12}+O(|\xi|^{10})&\mbox{if}\ \ \xi\in\ml{Z}_{\extt}(N_0)\ \ \mbox{and} \ \ a\neq 1,\\
144|\xi|^8+O(|\xi|^6)&\mbox{if}\ \ \xi\in\ml{Z}_{\extt}(N_0)\ \ \mbox{and} \ \ a= 1,
\end{cases}
\end{align*}
thanks to our condition $a>1/2$, moreover,  $P_{\mathrm{Dis}}(|\xi|)=5+8(1+a^2)|\xi|^2>0$. We claim that for $\xi\in\ml{Z}_{\intt}(\varepsilon_0)\cup\ml{Z}_{\extt}(N_0)$ there are two pairs of non-real complex conjugate roots $\lambda_{1,2}$ and $\lambda_{3,4}$ to the quartic \eqref{quartic-eq}. For simplicity, they are denoted by
\begin{align}\label{Two-pairs-Conjugate-Roots}
\lambda_{1,2}=\lambda_{\mathrm{R}}^{(1)}+i\lambda_{\mathrm{I}}^{(1)}\ \ \mbox{and}\ \ \lambda_{3,4}=\lambda_{\mathrm{R}}^{(2)}+i\lambda_{\mathrm{I}}^{(2)},
\end{align}
where  $\lambda_{\mathrm{R}}^{(1)}, \lambda_{\mathrm{R}}^{(2)}, \lambda_{\mathrm{I}}^{(1)}, \lambda_{\mathrm{I}}^{(2)}\in\mb{R}\backslash\{0\}$ will be chosen later in different situations depending on the magnitude of $|\xi|\in(0,\varepsilon_0]\cup[N_0,+\infty)$ and the value of $a>1/2$.

It seems challenging to derive explicit roots to the quartic \eqref{quartic-eq}. Our purpose is to proceed asymptotic analysis for the $|\xi|$-dependent roots, which allows us to discuss behavior of the roots $\lambda_j=\lambda_j(|\xi|)$ with $j=1,\dots,4$ for different size of frequencies instead of their explicit forms.
\begin{itemize}
	\item \textbf{Small Frequencies}: Let us first consider $\xi\in\ml{Z}_{\intt}(\varepsilon_0)$ with $0<\varepsilon_0\ll 1$. We recall that the roots to \eqref{quartic-eq} are conjugate with two pairs and they can be expanded by
	\begin{align*}
	\lambda_j=\sum\limits_{k=0}^{+\infty}\lambda_{j,k}|\xi|^k\ \ \mbox{with}\ \ \lambda_{j,k}\in\mb{C}.
	\end{align*}
Plugging the last expansion into the quartic \eqref{quartic-eq} we are able to get
\begin{align*}
\lambda_{1,2}&=\left(-\frac{1}{2}\pm\frac{\sqrt{3}}{2}i\right)+\left(\frac{1}{2}\pm\frac{1+2a^2}{2\sqrt{3}}i\right)|\xi|^2+O(|\xi|^4),\\
\lambda_{3,4}&=\left(-\frac{1}{2}\pm\frac{\sqrt{4a^2-1}}{2}i\right)|\xi|^2+\left(a^2\mp\frac{a^2(a^2-1)}{\sqrt{4a^2-1}}i\right)|\xi|^4+O(|\xi|^6),
\end{align*}
for $\xi\in\ml{Z}_{\intt}(\varepsilon_0)$, in which we considered $4a^2-1>0$ due to $a>1/2$.
\begin{remark}
In the above expansions, we not only obtain pairwise distinct characteristic roots with negative real parts as those in \cite[Formulas (4.5)-(4.6)]{Ide-Haramoto-Kawashima=2008} but also derive some higher order terms, i.e. the $|\xi|^2$-terms in $\lambda_{1,2}$ and the $|\xi|^4$-terms in $\lambda_{3,4}$. These non-trivial higher order terms contribute to further expansions and asymptotic profiles of solutions.
\end{remark}
\item \textbf{Large Frequencies}: Let us turn to $\xi\in\ml{Z}_{\extt}(N_0)$ with $N_0\gg1$. We recall that the roots to \eqref{quartic-eq} are conjugate with two pairs and they can be expanded by
\begin{align*}
	\lambda_j=\bar{\lambda}_{j,1}|\xi|+\sum\limits_{k=0}^{+\infty}\bar{\lambda}_{j,-k}|\xi|^{-k}\ \ \mbox{with}\ \ \bar{\lambda}_{j,1},\bar{\lambda}_{j,-k}\in\mb{C}.
\end{align*}
Plugging the last expansion into the quartic \eqref{quartic-eq}, we need to distinguish it between two cases as follows: if $a=1$, then (it is a supplement of \cite{Ide-Haramoto-Kawashima=2008,Racke-Said=2013})
\begin{align*}
\lambda_{1,2}&=\pm i|\xi|+\frac{-1\pm \sqrt{3}i}{4}+O(|\xi|^{-1}),\\
\lambda_{3,4}&=\pm i|\xi|+\frac{-1\mp \sqrt{3}i}{4}+O(|\xi|^{-1});
\end{align*}
if $a\neq 1$, then
\begin{align*}
\lambda_{1,2}&=\pm i|\xi|+\frac{\pm i}{2(1-a^2)}|\xi|^{-1}-\frac{1}{2(1-a^2)^2}|\xi|^{-2}+O(|\xi|^{-3}),\\
\lambda_{3,4}&=\pm ia|\xi|-\frac{1}{2}+O(|\xi|^{-1}),
\end{align*}
for $\xi\in\ml{Z}_{\extt}(N_0)$. Note that we correct a typo, precisely, the coefficient of $|\xi|^{-2}$ when $a\neq 1$, in \cite[Formulas (4.9)-(4.10)]{Ide-Haramoto-Kawashima=2008} by an additional factor $1/2$.
\item \textbf{Bounded Frequencies}: Let us finally consider $\xi\in\ml{Z}_{\bdd}(\varepsilon_0,N_0)$ in which we determine the sign of $\mathrm{Re}\,\lambda_j$ for all $j=1,\dots,4$. We assume by contradiction that there exists $j_0\in\{1,\dots,4\}$ such that $\lambda_{j_0}=ib_{j_0}$ with $b_{j_0}\in\mb{R}\backslash\{0\}$, namely, a pure imaginary root to the quartic \eqref{quartic-eq}. It tells us that
\begin{align*}
	b_{j_0}^4-ib_{j_0}^3-[1+(1+a^2)|\xi|^2]b_{j_0}^2+i|\xi|^2b_{j_0}+a^2|\xi|^4=0,
\end{align*}
whose solution should fulfill
\begin{align*}
b_{j_0}^4-[1+(1+a^2)|\xi|^2]b_{j_0}^2+a^2|\xi|^4=0\ \ \mbox{as well as}\ \ b_{j_0}(b_{j_0}^2-|\xi|^2)=0.
\end{align*}
They yield a contradiction immediately due to $\xi\in\ml{Z}_{\bdd}(\varepsilon_0,N_0)$, and thus all roots cannot be pure imaginary. The continuity of characteristic roots with respect to $|\xi|$ associated with $\mathrm{Re}\,\lambda_j<0$ for $\xi\in\ml{Z}_{\intt}(\varepsilon_0)\cup\ml{Z}_{\extt}(N_0)$ shows that
\begin{align}\label{Negative-roots}
\mathrm{Re}\,\lambda_j<0\ \ \mbox{with}\ \ j=1,\dots,4,
\end{align}
for $\xi\in\ml{Z}_{\bdd}(\varepsilon_0,N_0)$.
\begin{remark}
The exponential stability was derived in \cite[Proposition 4.1]{Ide-Haramoto-Kawashima=2008} by energy methods in the Fourier space. In the above statement, thanks to the asymptotic expansions of characteristic roots when $\xi\in\ml{Z}_{\intt}(\varepsilon_0)\cup\ml{Z}_{\extt}(N_0)$ this stability also can be claimed by a contradiction argument.
\end{remark}
\end{itemize}

\subsection{Representation of solutions via conjugate roots in the Fourier space}
\hspace{5mm}This part contributes to determining the formal representation of $\widehat{u}$ to the Cauchy problem \eqref{General-Fourth-order-eq-Fourier} benefiting from two pairs of conjugate roots \eqref{Two-pairs-Conjugate-Roots} when $\xi\in\ml{Z}_{\intt}(\varepsilon_0)\cup\ml{Z}_{\extt}(N_0)$, that is
\begin{align*}
\widehat{u}&=d_1^{(1)}\mathrm{e}^{\lambda_1t}+d_1^{(2)}\mathrm{e}^{\lambda_2t}+d_2^{(1)}\mathrm{e}^{\lambda_3t}+d_2^{(2)}\mathrm{e}^{\lambda_4t}\\
&=\sum\limits_{k=1,2}\mathrm{e}^{\lambda_{\mathrm{R}}^{(k)}t}\left[(d_k^{(1)}+d_k^{(2)})\cos(\lambda_{\mathrm{I}}^{(k)}t)+i(d_k^{(1)}-d_k^{(2)})\sin(\lambda_{\mathrm{I}}^{(k)}t)\right],
\end{align*}
thanks to pairwise distinct characteristic roots \eqref{Two-pairs-Conjugate-Roots}. Here, the coefficients $d_k^{(1)}$ and $d_k^{(2)}$ of exponential functions $\mathrm{e}^{\lambda_jt}$ are determined according to
\begin{align}\label{linear-argebra}
	\underbrace{\left(
{\begin{array}{*{20}c}
			1 & 1 &1 &1\\
			\lambda_1 & \lambda_2 & \lambda_3 & \lambda_4\\
			\lambda_1^2 & \lambda_2^2 & \lambda_3^2 & \lambda_4^2\\
			\lambda_1^3 & \lambda_2^3 & \lambda_3^3 & \lambda_4^3\\
	\end{array}}
\right)}_{=:\mb{V}}
	\left(
{\begin{array}{*{20}c}
		d_1^{(1)} \\
		d_1^{(2)} \\
		d_2^{(1)} \\
		d_2^{(2)}\\
	\end{array}}
\right)
	=\underbrace{\left(
{\begin{array}{*{20}c}
			\widehat{u}_0 \\
			\widehat{u}_1 \\
			\widehat{u}_2 \\
			\widehat{u}_3 \\
	\end{array}}
\right)}_{=:\mb{D}}.
\end{align}
The determinant of this Vandermonde matrix $\mb{V}$ is represented as follows:
\begin{align*}
	\det(\mb{V})&=\prod\limits_{1\leqslant j<k\leqslant 4}(\lambda_k-\lambda_j)\\
	&=-4\lambda_{\mathrm{I}}^{(1)}\lambda_{\mathrm{I}}^{(2)}\left[(\lambda_{\mathrm{R}}^{(2)}-\lambda_{\mathrm{R}}^{(1)})^2+(\lambda_{\mathrm{I}}^{(2)}+\lambda_{\mathrm{I}}^{(1)})^2\right]\left[(\lambda_{\mathrm{R}}^{(2)}-\lambda_{\mathrm{R}}^{(1)})^2+(\lambda_{\mathrm{I}}^{(2)}-\lambda_{\mathrm{I}}^{(1)})^2\right].
\end{align*}
According to the well-known Cramer rule in the system of linear equations \eqref{linear-argebra}, the coefficients $d_{1,2}:=d_1^{(1,2)}$ and $d_{3,4}:=d_2^{(1,2)}$ are expressed by
\begin{align}\label{dj}
	d_j=\frac{\det(\mb{V}_j)}{\det(\mb{V})}\ \ \mbox{for}\ \ j=1,\dots,4,
\end{align}
in which $\mb{V}_j$ stands for the matrix formed by replacing the corresponding $j$-th column of $\mb{V}$ by the column vector $\mb{D}$ defined in \eqref{linear-argebra}. Then, carrying out lengthy but straightforward calculations, the determinants of $\mb{V}_j$ can be shown explicitly as follows:
\begin{align*}
	\det(\mb{V}_1)&=-2i\lambda_2\lambda_{\mathrm{I}}^{(2)}\left[(\lambda_{\mathrm{R}}^{(2)})^2+(\lambda_{\mathrm{I}}^{(2)})^2\right]\left[(\lambda_{\mathrm{R}}^{(2)}-\lambda_2)^2+(\lambda_{\mathrm{I}}^{(2)})^2\right]\widehat{u}_0\\
	&\quad+\left[2i\lambda_{\mathrm{I}}^{(2)}\left[(\lambda_{\mathrm{R}}^{(2)})^2+(\lambda_{\mathrm{I}}^{(2)})^2\right]^2+\lambda_2^2\lambda_4^2(\lambda_4-\lambda_2)-\lambda_2^2\lambda_3^2(\lambda_3-\lambda_2) \right]\widehat{u}_1\\
	&\quad+\left[-4i\lambda_{\mathrm{R}}^{(2)}\lambda_{\mathrm{I}}^{(2)}\left[(\lambda_{\mathrm{R}}^{(2)})^2+(\lambda_{\mathrm{I}}^{(2)})^2\right]-\lambda_2\lambda_4(\lambda_4^2-\lambda_2^2)+\lambda_2\lambda_3(\lambda_3^2-\lambda_2^2) \right]\widehat{u}_2\\
	&\quad+2i\lambda_{\mathrm{I}}^{(2)}\left[(\lambda_{\mathrm{R}}^{(2)}-\lambda_2)^2+(\lambda_{\mathrm{I}}^{(2)})^2\right]\widehat{u}_3,
\end{align*}
\begin{align*}
	\det(\mb{V}_2)&=2i\lambda_1\lambda_{\mathrm{I}}^{(2)}\left[(\lambda_{\mathrm{R}}^{(2)})^2+(\lambda_{\mathrm{I}}^{(2)})^2\right]\left[(\lambda_{\mathrm{R}}^{(2)}-\lambda_1)^2+(\lambda_{\mathrm{I}}^{(2)})^2\right]\widehat{u}_0\\
	&\quad-\left[2i\lambda_{\mathrm{I}}^{(2)}\left[(\lambda_{\mathrm{R}}^{(2)})^2+(\lambda_{\mathrm{I}}^{(2)})^2\right]^2+\lambda_1^2\lambda_4^2(\lambda_4-\lambda_1)-\lambda_1^2\lambda_3^2(\lambda_3-\lambda_1) \right]\widehat{u}_1\\
	&\quad-\left[-4i\lambda_{\mathrm{R}}^{(2)}\lambda_{\mathrm{I}}^{(2)}\left[(\lambda_{\mathrm{R}}^{(2)})^2+(\lambda_{\mathrm{I}}^{(2)})^2\right]-\lambda_1\lambda_4(\lambda_4^2-\lambda_1^2)+\lambda_1\lambda_3(\lambda_3^2-\lambda_1^2) \right]\widehat{u}_2\\
	&\quad-2i\lambda_{\mathrm{I}}^{(2)}\left[(\lambda_{\mathrm{R}}^{(2)}-\lambda_1)^2+(\lambda_{\mathrm{I}}^{(2)})^2\right]\widehat{u}_3,
\end{align*}
\begin{align*}
	\det(\mb{V}_3)&=-2i\lambda_4\lambda_{\mathrm{I}}^{(1)}\left[(\lambda_{\mathrm{R}}^{(1)})^2+(\lambda_{\mathrm{I}}^{(1)})^2\right]\left[(\lambda_{\mathrm{R}}^{(1)}-\lambda_4)^2+(\lambda_{\mathrm{I}}^{(1)})^2\right]\widehat{u}_0\\
	&\quad+\left[2i\lambda_{\mathrm{I}}^{(1)}\left[(\lambda_{\mathrm{R}}^{(1)})^2+(\lambda_{\mathrm{I}}^{(1)})^2\right]^2+\lambda_1^2\lambda_4^2(\lambda_4-\lambda_1)-\lambda_2^2\lambda_4^2(\lambda_4-\lambda_2) \right]\widehat{u}_1\\
	&\quad+\left[-4i\lambda_{\mathrm{R}}^{(1)}\lambda_{\mathrm{I}}^{(1)}\left[(\lambda_{\mathrm{R}}^{(1)})^2+(\lambda_{\mathrm{I}}^{(1)})^2\right]-\lambda_1\lambda_4(\lambda_4^2-\lambda_1^2)+\lambda_2\lambda_4(\lambda_4^2-\lambda_2^2) \right]\widehat{u}_2\\
	&\quad+2i\lambda_{\mathrm{I}}^{(1)}\left[(\lambda_{\mathrm{R}}^{(1)}-\lambda_4)^2+(\lambda_{\mathrm{I}}^{(1)})^2\right]\widehat{u}_3,
\end{align*}
\begin{align*}
	\det(\mb{V}_4)&=2i\lambda_3\lambda_{\mathrm{I}}^{(1)}\left[(\lambda_{\mathrm{R}}^{(1)})^2+(\lambda_{\mathrm{I}}^{(1)})^2\right]\left[(\lambda_{\mathrm{R}}^{(1)}-\lambda_3)^2+(\lambda_{\mathrm{I}}^{(1)})^2\right]\widehat{u}_0\\
	&\quad-\left[2i\lambda_{\mathrm{I}}^{(1)}\left[(\lambda_{\mathrm{R}}^{(1)})^2+(\lambda_{\mathrm{I}}^{(1)})^2\right]^2+\lambda_1^2\lambda_3^2(\lambda_3-\lambda_1)-\lambda_2^2\lambda_3^2(\lambda_3-\lambda_2) \right]\widehat{u}_1\\
	&\quad-\left[-4i\lambda_{\mathrm{R}}^{(1)}\lambda_{\mathrm{I}}^{(1)}\left[(\lambda_{\mathrm{R}}^{(1)})^2+(\lambda_{\mathrm{I}}^{(1)})^2\right]-\lambda_1\lambda_3(\lambda_3^2-\lambda_1^2)+\lambda_2\lambda_3(\lambda_3^2-\lambda_2^2) \right]\widehat{u}_2\\
	&\quad -2i\lambda_{\mathrm{I}}^{(1)}\left[(\lambda_{\mathrm{R}}^{(1)}-\lambda_3)^2+(\lambda_{\mathrm{I}}^{(1)})^2\right]\widehat{u}_3.
\end{align*}
Let us denote $\mb{V}_{1,2}=:\mb{V}_1^{(1,2)}$ and $\mb{V}_{3,4}=:\mb{V}_2^{(1,2)}$ for briefness. Summing up the last determinants of $\mb{V}_j$  and the coefficients $d_j$ defined in \eqref{dj}, we conclude the refined representation of solution 
\begin{align}
	\widehat{u}&=\sum\limits_{k=1,2}\underbrace{\mathrm{e}^{\lambda_{\mathrm{R}}^{(k)}t}\left[\cos(\lambda_{\mathrm{I}}^{(k)}t)\frac{\det(\mb{V}_k^{(1)})+\det(\mb{V}_k^{(2)})}{\det(\mb{V})}+i\sin(\lambda_{\mathrm{I}}^{(k)}t)\frac{\det(\mb{V}_k^{(1)})-\det(\mb{V}_k^{(2)})}{\det(\mb{V})}\right]}_{=:\widehat{u}^{(k)}}.\label{Rep-wide-u}
\end{align}
By making use of two pairs of conjugate characteristic roots, the representations of $\widehat{w}$ and $\widehat{\psi}$ for the dissipative Timoshenko system \eqref{Dissipative-Timoshenko} are found, where some cancellations in $\det(\mb{V}_k^{(1)})\pm\det(\mb{V}_k^{(2)})$ will be showed later.

\subsection{The transversal displacement in the Fourier space}\label{Subsection-transversal displacement}
\hspace{5mm}Recalling the transversal displacement $\widehat{w}$ in Subsection \ref{Subsection-Reduction} one knows
\begin{align*}
\widehat{w}_2=-|\xi|^2\widehat{w}_0-i\xi\widehat{\psi}_0\ \ \mbox{and}\ \ \widehat{w}_3=-|\xi|^2\widehat{w}_1-i\xi\widehat{\psi}_1.
\end{align*}
Let us apply the representation \eqref{Rep-wide-u} with $\widehat{u}=\widehat{w}$ to derive the next result for small frequencies.
\begin{prop}\label{Prop-w-small}
Let $\xi\in\ml{Z}_{\intt}(\varepsilon_0)$ with $0<\varepsilon_0\ll 1$. Then, $\widehat{w}$ fulfills the following pointwise estimates in the Fourier space:
\begin{align*}
\chi_{\intt}(\xi)|\widehat{w}|\lesssim \chi_{\intt}(\xi)\left(1+\frac{|\sin(c_a|\xi|^2t)|}{c_a|\xi|^2}\right)\mathrm{e}^{-c|\xi|^2t}\left(|\widehat{w}_0|+|\widehat{w}_1|+|\widehat{\psi}_0|+|\widehat{\psi}_1| \right)
\end{align*}
as well as
\begin{align}\label{Error-w02}
\chi_{\intt}(\xi)\left|\widehat{w}-\widehat{\ml{G}}(t,|\xi|)\left(\widehat{w}_1-i\xi(\widehat{\psi}_0+\widehat{\psi}_1)\right)\right|\lesssim \chi_{\intt}(\xi)\,\mathrm{e}^{-c|\xi|^2t}\left(|\widehat{w}_0|+|\widehat{w}_1|+|\widehat{\psi}_0|+|\widehat{\psi}_1| \right)
\end{align}
with the diffusion plate factor
\begin{align}\label{Diff-Plate-Factor}
\widehat{\ml{G}}(t,|\xi|):=\frac{\sin(c_a|\xi|^2t)}{c_a|\xi|^2}\,\mathrm{e}^{-\frac{1}{2}|\xi|^2t}\ \ \mbox{carrying}\ \ c_a=\frac{\sqrt{4a^2-1}}{2}>0.
\end{align}
\end{prop}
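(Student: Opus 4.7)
The plan is to apply the general representation \eqref{Rep-wide-u} derived in Subsection 3.3, specialized to the data
\begin{align*}
(\widehat{w}_0,\widehat{w}_1,\widehat{w}_2,\widehat{w}_3)=\bigl(\widehat{w}_0,\,\widehat{w}_1,\,-|\xi|^2\widehat{w}_0-i\xi\widehat{\psi}_0,\,-|\xi|^2\widehat{w}_1-i\xi\widehat{\psi}_1\bigr),
\end{align*}
and then exploit the small-frequency asymptotics of the characteristic roots from Subsection 3.2. The crucial point is that for $|\xi|\leqslant\varepsilon_0\ll 1$ the two conjugate pairs split into a \emph{fast} pair with $\lambda_{\mathrm{R}}^{(1)}\approx-\tfrac12+\tfrac12|\xi|^2$, $\lambda_{\mathrm{I}}^{(1)}\approx\tfrac{\sqrt{3}}{2}$, and a \emph{slow diffusive} pair with $\lambda_{\mathrm{R}}^{(2)}\approx-\tfrac12|\xi|^2$, $\lambda_{\mathrm{I}}^{(2)}\approx c_a|\xi|^2$. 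The $\widehat{w}^{(1)}$-contribution therefore carries the factor $\mathrm{e}^{-t/2+|\xi|^2 t/2}$, which for $\varepsilon_0$ small enough is dominated by $\mathrm{e}^{-t/4}\lesssim\mathrm{e}^{-c|\xi|^2 t}$; the $\widehat{w}^{(2)}$-contribution produces the diffusion plate oscillation $\mathrm{e}^{-|\xi|^2 t/2}\bigl(\cos(\lambda_{\mathrm{I}}^{(2)}t)+i\sin(\lambda_{\mathrm{I}}^{(2)}t)\bigr)$ that will yield the factor $\widehat{\ml{G}}(t,|\xi|)$ defined in \eqref{Diff-Plate-Factor}.

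Next, I would expand $\det(\mb{V})$ and $\det(\mb{V}_k^{(j)})$ in powers of $|\xi|$. Using $\lambda_{\mathrm{I}}^{(1)}=\tfrac{\sqrt{3}}{2}+O(|\xi|^2)$, $\lambda_{\mathrm{I}}^{(2)}=c_a|\xi|^2+O(|\xi|^4)$ and the boundedness away from zero of the two bracket factors in the Vandermonde product, one obtains $\det(\mb{V})=-\kappa_a|\xi|^2+O(|\xi|^4)$ for an explicit nonzero constant $\kappa_a$. The explicit polynomial expressions in Subsection 3.3 for $\det(\mb{V}_k^{(j)})$ are then evaluated with the data $(\widehat{w}_0,\widehat{w}_1,\widehat{w}_2,\widehat{w}_3)$, and the sums/differences
\begin{align*}
\det(\mb{V}_k^{(1)})\pm\det(\mb{V}_k^{(2)})
\end{align*}
are regrouped according to which $\widehat{u}_j$ they multiply. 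The first displayed estimate then follows by the crude bound
\begin{align*}
\mathrm{e}^{\lambda_{\mathrm{R}}^{(k)}t}\bigl|\cos(\lambda_{\mathrm{I}}^{(k)}t)\bigr|,\ \mathrm{e}^{\lambda_{\mathrm{R}}^{(k)}t}\bigl|\sin(\lambda_{\mathrm{I}}^{(k)}t)\bigr|\lesssim \mathrm{e}^{-c|\xi|^2 t}\Bigl(1+\tfrac{|\sin(c_a|\xi|^2 t)|}{c_a|\xi|^2}\Bigr)
\end{align*}
applied termwise, after absorbing the $|\xi|^{-2}$ from $\det(\mb{V})^{-1}$ against the $|\xi|^2$ generated by every initial-data coefficient that could produce a strong singularity.

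For the refined approximation \eqref{Error-w02} I would isolate the $\widehat{w}^{(2)}$-piece, write $\sin(\lambda_{\mathrm{I}}^{(2)}t)=\sin(c_a|\xi|^2 t)+[\sin(\lambda_{\mathrm{I}}^{(2)}t)-\sin(c_a|\xi|^2 t)]$ and similarly for $\cos$, with the bracket controlled by $|\lambda_{\mathrm{I}}^{(2)}-c_a|\xi|^2|\cdot t\lesssim |\xi|^4 t$ (absorbed by $\mathrm{e}^{-c|\xi|^2 t}$). Substituting the expansions of $\lambda_j$ into the explicit formulas for $\det(\mb{V}_k^{(j)})$ and replacing $\widehat{w}_2,\widehat{w}_3$ by their expressions in $(\widehat{w}_0,\widehat{w}_1,\widehat{\psi}_0,\widehat{\psi}_1)$, I would collect the coefficient of $\widehat{\ml{G}}(t,|\xi|)$ in $\widehat{w}^{(2)}/\det(\mb{V})$ and show it equals $\widehat{w}_1-i\xi(\widehat{\psi}_0+\widehat{\psi}_1)$ up to an $O(|\xi|^2)$ remainder times the data, while all remaining terms come either from the fast pair $\widehat{w}^{(1)}$ (exponentially small) or from subleading contributions of the slow pair.

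The main obstacle is the third step: the leading coefficient in front of $\widehat{\ml{G}}(t,|\xi|)$ must produce exactly $\widehat{w}_1-i\xi(\widehat{\psi}_0+\widehat{\psi}_1)$, so the would-be singular $|\xi|^{-2}\widehat{w}_0$-contribution from $\det(\mb{V})^{-1}$ has to cancel identically against the $-|\xi|^2\widehat{w}_0$ piece hidden inside $\widehat{w}_2$. Tracking this cancellation, together with the exact pairing of $\widehat{\psi}_0$ and $\widehat{\psi}_1$ inside $\widehat{w}_3$ with the $\sin$ factor, is where the computation is delicate; the higher-order terms in the $\lambda_j$-expansions of Subsection 3.2 are precisely what is needed so that the error inherits the Gaussian decay $\mathrm{e}^{-c|\xi|^2 t}$ without any residual $|\xi|^{-1}$ singularity.
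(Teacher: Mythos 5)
Your proposal follows essentially the same route as the paper: split $\widehat{w}$ via \eqref{Rep-wide-u} into the fast pair $\widehat{w}^{(1)}$ (exponentially decaying) and the slow diffusive pair $\widehat{w}^{(2)}$, expand $\det(\mb{V})=-\sqrt{3(4a^2-1)}\,|\xi|^2+O(|\xi|^4)$ and the cofactor determinants in powers of $|\xi|$, treat the cosine and sine parts of $\widehat{w}^{(2)}$ separately (the sum $\det(\mb{V}_2^{(1)})+\det(\mb{V}_2^{(2)})$ is $O(|\xi|^2)$ while the difference is $O(1)$), and replace $\sin(\lambda_{\mathrm{I}}^{(2)}t)$, $\mathrm{e}^{\lambda_{\mathrm{R}}^{(2)}t}$ and $\det(\mb{V})^{-1}$ by their leading forms with mean-value-theorem error control. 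This is what the paper's proof of Proposition~\ref{Prop-w-small} does.

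One point in your description is misplaced, though it does not damage the strategy. You flag as the ``main obstacle'' a would-be singular $|\xi|^{-2}\widehat{w}_0$ contribution that must cancel identically against the $-|\xi|^2\widehat{w}_0$ hidden in $\widehat{w}_2$. In fact no such cancellation is required: the coefficient of $\widehat{u}_0$ in $\det(\mb{V}_2^{(1)})-\det(\mb{V}_2^{(2)})$ is already $O(|\xi|^2)$, because the $\det(\mb{V}_3)$ and $\det(\mb{V}_4)$ formulas carry the prefactors $\lambda_4$ and $\lambda_3$ respectively, each $O(|\xi|^2)$ from the small-frequency expansion $\lambda_{3,4}=(-\tfrac12\pm\tfrac{\sqrt{4a^2-1}}{2}i)|\xi|^2+O(|\xi|^4)$. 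The $-|\xi|^2\widehat{w}_0$ sitting inside $\widehat{w}_2$ likewise contributes $O(|\xi|^2)$ since its coefficient is $O(1)$. Both are harmless independently. The $|\xi|^{-2}$ singularity that survives division by $\det(\mb{V})$ lives entirely in the $\widehat{w}_1$, $\xi\widehat{\psi}_0$, $\xi\widehat{\psi}_1$ coefficients of the sine part, which is precisely what produces the $\widehat{\ml{G}}(t,|\xi|)\bigl(\widehat{w}_1-i\xi(\widehat{\psi}_0+\widehat{\psi}_1)\bigr)$ leading term. Relatedly, the crude bound you display for the first estimate is trivially true (as $|\sin|,|\cos|\le1$) but not the source of the factor $\bigl(1+\frac{|\sin(c_a|\xi|^2t)|}{c_a|\xi|^2}\bigr)$; that factor arises from the ratio of determinants, not from $\mathrm{e}^{\lambda_{\mathrm{R}}^{(k)}t}|\sin(\lambda_{\mathrm{I}}^{(k)}t)|$ alone.
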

\begin{proof}
According to the asymptotic expansions of characteristic roots in Subsection \ref{Subsection-Charcteristic-Roots}, namely,
\begin{align*}
\lambda_{\mathrm{R}}^{(1)}&=-\frac{1}{2}+\frac{1}{2}|\xi|^2+O(|\xi|^4),\ \ \ \ \ \ \ \lambda_{\mathrm{I}}^{(1)}=\frac{\sqrt{3}}{2}+\frac{1+2a^2}{2\sqrt{3}}|\xi|^2+O(|\xi|^4),\\
\lambda_{\mathrm{R}}^{(2)}&=-\frac{1}{2}|\xi|^2+a^2|\xi|^4+O(|\xi|^6),\ \ \lambda_{\mathrm{I}}^{(2)}=c_a|\xi|^2-\frac{a^2(a^2-1)}{\sqrt{4a^2-1}}|\xi|^4+O(|\xi|^6),
\end{align*}
for $\xi\in\ml{Z}_{\intt}(\varepsilon_0)$ we can obtain
\begin{align}\label{det-V-small}
\det(\mb{V})=-\sqrt{3(4a^2-1)}|\xi|^2+O(|\xi|^4)
\end{align}
and
\begin{align*}
\chi_{\intt}(\xi)\left(|\det(\mb{V}_1^{(1)})|+|\det(\mb{V}_1^{(2)})| \right)\lesssim \chi_{\intt}(\xi)\left[|\xi|^4(|\widehat{w}_0|+|\widehat{w}_1|)+|\xi|^3(|\widehat{\psi}_0|+|\widehat{\psi}_1|)\right].
\end{align*}
Then, it leads to an exponential decay estimate
\begin{align}\label{Exponential-Decay-small}
\chi_{\intt}(\xi)|\widehat{w}^{(1)}|&\lesssim\chi_{\intt}(\xi)\,\mathrm{e}^{-ct}\left(|\cos(\lambda_{\mathrm{I}}^{(1)}t)|+|\sin(\lambda_{\mathrm{I}}^{(1)}t)|\right)\frac{|\det(\mb{V}_1^{(1)})|+|\det(\mb{V}_1^{(2)})|}{|\det(\mb{V})|}\notag\\
&\lesssim\chi_{\intt}(\xi)\,\mathrm{e}^{-ct}\left(|\widehat{w}_0|+|\widehat{w}_1|+|\widehat{\psi}_0|+|\widehat{\psi}_1|\right).
\end{align}

For this reason, the leading term of $\chi_{\intt}(\xi)\widehat{w}$ localizes in $\chi_{\intt}(\xi)\widehat{w}^{(2)}$ in the formula \eqref{Rep-wide-u}. 

Concerning the cosine part firstly, a direct calculation shows
\begin{align}\label{V+V}
\chi_{\intt}(\xi)|\det(\mb{V}_2^{(1)})+\det(\mb{V}_2^{(2)})|\lesssim\chi_{\intt}(\xi)|\xi|^2\left(|\widehat{w}_0|+|\widehat{w}_1|+|\widehat{\psi}_0|+|\widehat{\psi}_1| \right),
\end{align}
where the following $|\xi|^0$-terms in $\det(\mb{V}_2^{(1)})$ have been compensated by the corresponding $|\xi|^0$-terms in $\det(\mb{V}_2^{(2)})$ with the opposite signs:
\begin{align}\label{cancel}
2i\lambda_{\mathrm{I}}^{(1)}\left[(\lambda_{\mathrm{R}}^{(1)})^2+(\lambda_{\mathrm{I}}^{(1)})^2\right]^2\widehat{w}_1,\ \ -4i\lambda_{\mathrm{R}}^{(1)}\lambda_{\mathrm{I}}^{(1)}\left[(\lambda_{\mathrm{R}}^{(1)})^2+(\lambda_{\mathrm{I}}^{(1)})^2\right]\widehat{w}_2,\ \ 2i\lambda_{\mathrm{I}}^{(1)}\left[(\lambda_{\mathrm{R}}^{(1)})^2+(\lambda_{\mathrm{I}}^{(1)})^2\right]\widehat{w}_3.
\end{align}
As a consequence,
\begin{align}\label{w-cos-part}
\chi_{\intt}(\xi)\left|\mathrm{e}^{\lambda_{\mathrm{R}}^{(2)}t}\cos(\lambda_{\mathrm{I}}^{(2)}t)\frac{\det(\mb{V}_2^{(1)})+\det(\mb{V}_2^{(2)})}{\det(\mb{V})} \right|\lesssim\chi_{\intt}(\xi)\,\mathrm{e}^{-c|\xi|^2t}\left(|\widehat{w}_0|+|\widehat{w}_1|+|\widehat{\psi}_0|+|\widehat{\psi}_1|\right).
\end{align}

Let us next turn to the sine part. Unfortunately, the previous compensations do not hold for $\det(\mb{V}_2^{(1)})-\det(\mb{V}_2^{(2)})$ due to the subtraction. Instead, one observes
\begin{align*}
&\chi_{\intt}(\xi)\big|\left(\det(\mb{V}_2^{(1)})-\det(\mb{V}_2^{(2)})\right)-4i\lambda_{\mathrm{I}}^{(1)}\left[(\lambda_{\mathrm{R}}^{(1)})^2+(\lambda_{\mathrm{I}}^{(1)})^2\right]^2\widehat{w}_1\\
&\qquad\quad\,+8\lambda_{\mathrm{R}}^{(1)}\lambda_{\mathrm{I}}^{(1)}\left[(\lambda_{\mathrm{R}}^{(1)})^2+(\lambda_{\mathrm{I}}^{(1)})^2\right]\xi\widehat{\psi}_0-4\lambda_{\mathrm{I}}^{(1)}\left[(\lambda_{\mathrm{R}}^{(1)})^2+(\lambda_{\mathrm{I}}^{(1)})^2\right]\xi\widehat{\psi}_1\big|\\
&\lesssim\chi_{\intt}(\xi)|\xi|^2\left(|\widehat{w}_0|+|\widehat{w}_1|+|\widehat{\psi}_0|+|\widehat{\psi}_1| \right),
\end{align*}
and
\begin{align*}
\chi_{\intt}(\xi)\left|4i\lambda_{\mathrm{I}}^{(1)}\left[(\lambda_{\mathrm{R}}^{(1)})^2+(\lambda_{\mathrm{I}}^{(1)})^2\right]^2-2\sqrt{3}i\right|&\lesssim \chi_{\intt}(\xi)|\xi|^2,\\
\chi_{\intt}(\xi)\left|-8\lambda_{\mathrm{R}}^{(1)}\lambda_{\mathrm{I}}^{(1)}\left[(\lambda_{\mathrm{R}}^{(1)})^2+(\lambda_{\mathrm{I}}^{(1)})^2\right]\xi-2\sqrt{3}\,\xi \right|&\lesssim \chi_{\intt}(\xi)|\xi|^3,\\
\chi_{\intt}(\xi)\left|4\lambda_{\mathrm{I}}^{(1)}\left[(\lambda_{\mathrm{R}}^{(1)})^2+(\lambda_{\mathrm{I}}^{(1)})^2\right]\xi-2\sqrt{3}\,\xi  \right|&\lesssim \chi_{\intt}(\xi)|\xi|^3.
\end{align*}
Thus, an application of the triangle inequality yields
\begin{align*}
\chi_{\intt}(\xi)\left|\left(\det(\mb{V}_2^{(1)})-\det(\mb{V}_2^{(2)})\right)-2\sqrt{3}\left(i\widehat{w}_1+\xi\widehat{\psi}_0+\xi\widehat{\psi}_1\right) \right|\lesssim\chi_{\intt}(\xi)|\xi|^2\left(|\widehat{w}_0|+|\widehat{w}_1|+|\widehat{\psi}_0|+|\widehat{\psi}_1| \right).
\end{align*}
Furthermore, the error estimates hold
\begin{align*}
\chi_{\intt}(\xi)\left|\frac{1}{\det(\mb{V})}-\frac{1}{-\sqrt{3(4a^2-1)}|\xi|^2}\right|&\lesssim\chi_{\intt}(\xi),\\
\chi_{\intt}(\xi)\left|\sin(\lambda_{\mathrm{I}}^{(2)}t)-\sin(c_a|\xi|^2t)\right|&\lesssim\chi_{\intt}(\xi)|\xi|^4t,
\end{align*}
where we employed the mean value theorem in the last inequality, the boundedness of cosine function, and $|\lambda_{\mathrm{I}}^{(2)}-c_a|\xi|^2|=O(|\xi|^4)$. Summarizing the last three inequalities, by the chain triangle inequality we conclude
\begin{align}\label{w-sin-part}
&\chi_{\intt}(\xi)\left|i\sin(\lambda_{\mathrm{I}}^{(2)}t)\frac{\det(\mb{V}_2^{(1)})-\det(\mb{V}_2^{(2)})}{\det(\mb{V})}-\frac{\sin(c_a|\xi|^2t)}{c_a|\xi|^2}\left(\widehat{w}_1-i\xi(\widehat{\psi}_0+\widehat{\psi}_1)\right)  \right|\notag\\
&\lesssim\chi_{\intt}(\xi)(1+|\xi|^2t)\left(|\widehat{w}_0|+|\widehat{w}_1|+|\widehat{\psi}_0|+|\widehat{\psi}_1| \right).
\end{align}
The combination of \eqref{w-cos-part} and \eqref{w-sin-part} implies
\begin{align*}
\chi_{\intt}(\xi)\left|\widehat{w}^{(2)}-\frac{\sin(c_a|\xi|^2t)}{c_a|\xi|^2}\,\mathrm{e}^{-\frac{1}{2}|\xi|^2t}\left(\widehat{w}_1-i\xi(\widehat{\psi}_0+\widehat{\psi}_1)\right)\right|\lesssim\chi_{\intt}(\xi)\,\mathrm{e}^{-c|\xi|^2t}\left(|\widehat{w}_0|+|\widehat{w}_1|+|\widehat{\psi}_0|+|\widehat{\psi}_1| \right)
\end{align*}
and by the triangle inequality again the resultant leads to
\begin{align*}
\chi_{\intt}(\xi)|\widehat{w}^{(2)}|\lesssim \chi_{\intt}(\xi)\left(1+\frac{|\sin(c_a|\xi|^2t)|}{c_a|\xi|^2}\right)\mathrm{e}^{-c|\xi|^2t}\left(|\widehat{w}_0|+|\widehat{w}_1|+|\widehat{\psi}_0|+|\widehat{\psi}_1| \right),
\end{align*}
where we used the error estimate for the exponential function
\begin{align*}
\chi_{\intt}(\xi)\left|\mathrm{e}^{\lambda_{\mathrm{R}}^{(2)}t}-\mathrm{e}^{-\frac{1}{2}|\xi|^2t}\right|&=\chi_{\intt}(\xi)\,\mathrm{e}^{-\frac{1}{2}|\xi|^2t}\left|\big(a^2|\xi|^4+O(|\xi|^6)\big)\,t\int_0^1\mathrm{e}^{(a^2|\xi|^4+O(|\xi|^6))t\eta}\,\mathrm{d}\eta\right|\\
&\lesssim \chi_{\intt}(\xi)|\xi|^2\,\mathrm{e}^{-c|\xi|^2t}.
\end{align*}
Finally, recalling $\widehat{w}=\widehat{w}^{(1)}+\widehat{w}^{(2)}$ and the exponential decay estimate \eqref{Exponential-Decay-small}, our proof is complete.
\end{proof}

Let us next turn to some pointwise estimates of $\widehat{w}$ for large frequencies.
\begin{prop}\label{Prop-w-large}
Let $\xi\in\ml{Z}_{\extt}(N_0)$ with $N_0\gg 1$. Then, $\widehat{w}$ fulfills the following pointwise estimates in the Fourier space:
\begin{align*}
\chi_{\extt}(\xi)|\widehat{w}|\lesssim
\chi_{\extt}(\xi)\left(|\widehat{w}_0|+|\xi|^{-1}|\widehat{w}_1|+|\xi|^{-1}|\widehat{\psi}_0|+|\xi|^{-2}|\widehat{\psi}_1|\right)\times \begin{cases}
\mathrm{e}^{-ct}&\mbox{if}\ \ a=1,\\
\mathrm{e}^{-c|\xi|^{-2}t}&\mbox{if}\ \ a\neq1.
\end{cases}
\end{align*}
\end{prop}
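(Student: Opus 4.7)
My plan is to mirror the structure of the proof of Proposition \ref{Prop-w-small}, exploiting the representation \eqref{Rep-wide-u} together with the high-frequency asymptotic expansions of the characteristic roots developed in Subsection \ref{Subsection-Charcteristic-Roots}. The first ingredient is the exponential decay coming from $\mathrm{Re}\,\lambda_j$: for $a=1$ both pairs satisfy $\lambda_{\mathrm{R}}^{(k)}=-\tfrac{1}{4}+O(|\xi|^{-1})$ for $k=1,2$, giving the classical rate $\mathrm{e}^{-ct}$; for $a\neq 1$ the pair $\lambda_{1,2}$ has $\lambda_{\mathrm{R}}^{(1)}=-\tfrac{1}{2(1-a^2)^2}|\xi|^{-2}+O(|\xi|^{-3})$, producing the regularity-loss factor $\mathrm{e}^{-c|\xi|^{-2}t}$, while $\lambda_{\mathrm{R}}^{(2)}=-\tfrac{1}{2}+O(|\xi|^{-1})$ contributes a genuine $\mathrm{e}^{-ct}$. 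The slower of the two rates dominates in each case, matching the dichotomy announced in the proposition.

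The second ingredient is the size of the Vandermonde determinant $\det(\mb{V})$ for $\xi\in\ml{Z}_{\extt}(N_0)$. Since for $a\neq 1$ every difference $\lambda_k-\lambda_j$ is of order $|\xi|$, the product formula yields $|\det(\mb{V})|\sim|\xi|^6$; for $a=1$ the two differences $\lambda_1-\lambda_3$ and $\lambda_2-\lambda_4$ are only $O(1)$ (both pairs share the leading imaginary parts $\pm i|\xi|$), so that $|\det(\mb{V})|\sim|\xi|^4$. I would then turn to the explicit formulas for $\det(\mb{V}_k^{(j)})$ preceding \eqref{Rep-wide-u} and substitute the reduced initial values $\widehat{w}_2=-|\xi|^2\widehat{w}_0-i\xi\widehat{\psi}_0$ and $\widehat{w}_3=-|\xi|^2\widehat{w}_1-i\xi\widehat{\psi}_1$ back into these formulas in order to recast each $\det(\mb{V}_k^{(j)})$ as a linear combination of the physical initial data $\widehat{w}_0,\widehat{w}_1,\widehat{\psi}_0,\widehat{\psi}_1$.

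The target is to establish that, after this substitution, the coefficients of $\widehat{w}_0$, $\widehat{w}_1$, $\widehat{\psi}_0$, $\widehat{\psi}_1$ in $\det(\mb{V}_k^{(j)})$ are of respective orders $|\det(\mb{V})|$, $|\det(\mb{V})|\,|\xi|^{-1}$, $|\det(\mb{V})|\,|\xi|^{-1}$, $|\det(\mb{V})|\,|\xi|^{-2}$. Dividing by $\det(\mb{V})$ as in \eqref{dj} and inserting the resulting $d_j$ into \eqref{Rep-wide-u}, together with the trivial bounds $|\cos(\lambda_{\mathrm{I}}^{(k)}t)|,|\sin(\lambda_{\mathrm{I}}^{(k)}t)|\leqslant 1$ and the exponential factors identified in the first paragraph, will then produce the stated pointwise estimate. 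The main obstacle will be the case $a=1$: because of the smaller determinant $|\det(\mb{V})|\sim|\xi|^4$, the direct contribution to the coefficient of $\widehat{w}_0$ (coming from $\widehat{u}_0$) and the shifted contribution $-|\xi|^2\cdot(\text{coef of }\widehat{u}_2)$ are each of nominal order $|\xi|^5$, overshooting $|\det(\mb{V})|$ by one power of $|\xi|$. One must therefore exhibit an exact cancellation of these leading $|\xi|^5$ pieces, in the same spirit as the cancellations \eqref{cancel} invoked at low frequencies in Proposition \ref{Prop-w-small}; analogous but slightly easier cancellations are needed to get the $|\xi|^{-1}$ and $|\xi|^{-2}$ factors in front of $\widehat{w}_1$, $\widehat{\psi}_0$, and $\widehat{\psi}_1$. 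For $a\neq 1$, the larger determinant $|\det(\mb{V})|\sim|\xi|^6$ leaves enough room to absorb every term without any cancellation, and the bookkeeping there is essentially a direct size count.
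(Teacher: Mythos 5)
Your plan reproduces the paper's proof for the case $a\neq 1$: one records $\lambda_{\mathrm{R}}^{(1)}\sim -c|\xi|^{-2}$, $\lambda_{\mathrm{R}}^{(2)}\sim -c$, $|\det(\mb{V})|\sim|\xi|^6$, and $|\det(\mb{V}_j^{(\ell)})|\lesssim\sum_{k=0}^3|\xi|^{6-k}|\widehat{w}_k|$, divides by $\det(\mb{V})$, bounds the trigonometric factors by $1$, and finally substitutes $\widehat{w}_2=-|\xi|^2\widehat{w}_0-i\xi\widehat{\psi}_0$ and $\widehat{w}_3=-|\xi|^2\widehat{w}_1-i\xi\widehat{\psi}_1$. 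Whether the substitution is done before or after dividing is immaterial, and no cancellations are required; this is exactly what the paper carries out in detail.

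Your reading of the $a=1$ case is more alert than the paper's terse ``can be treated similarly'' and exposes a genuine subtlety: since $\lambda_1-\lambda_3$ and $\lambda_2-\lambda_4$ are only $O(1)$ there, one has $|\det(\mb{V})|\sim|\xi|^4$, and the direct size count overshoots by one power of $|\xi|$ in the $\widehat{w}_0$-piece. You are right that a cancellation in the spirit of \eqref{cancel} is required: the $\sim|\xi|^5$ coefficient of $\widehat{u}_0$ in $\det(\mb{V}_k^{(1)})+\det(\mb{V}_k^{(2)})$ must be paired with $-|\xi|^2$ times the $\sim|\xi|^3$ coefficient of $\widehat{u}_2$, and after using $\lambda_{\mathrm{R}}^{(1)}-\lambda_{\mathrm{R}}^{(2)}=O(|\xi|^{-1})$, $\lambda_{\mathrm{I}}^{(1)}-\lambda_{\mathrm{I}}^{(2)}=\tfrac{\sqrt{3}}{2}+O(|\xi|^{-1})$ these leading pieces do cancel; the same happens for $\widehat{u}_1$ against $-|\xi|^2\widehat{u}_3$. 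However, your final sentence lumps $\widehat{\psi}_0$, $\widehat{\psi}_1$ in with $\widehat{w}_1$, and that is where the argument is incomplete: $\widehat{\psi}_0$ and $\widehat{\psi}_1$ enter the formula \emph{only} through the substituted terms $-i\xi\widehat{\psi}_0$, $-i\xi\widehat{\psi}_1$ sitting inside $\widehat{w}_2$, $\widehat{w}_3$, so there is no companion quantity against which to cancel them. For $a=1$ the Cramer ratios attached to $\widehat{u}_2$ and $\widehat{u}_3$ are of genuine size $|\xi|^{-1}$ and $|\xi|^{-2}$ (not $|\xi|^{-2}$, $|\xi|^{-3}$ as in the non-equal-speed case, since one factor of $P'(\lambda_j)$ drops from $|\xi|$ to $O(1)$), which makes the $\widehat{\psi}_0$-contribution $O(1)$ and the $\widehat{\psi}_1$-contribution $O(|\xi|^{-1})$. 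So the target orders $|\det(\mb{V})|\,|\xi|^{-1}$ and $|\det(\mb{V})|\,|\xi|^{-2}$ you announce for those two coefficients are not reached by any cancellation mechanism you describe, and you should verify whether the displayed inequality of Proposition~\ref{Prop-w-large} really holds with the stated $|\xi|^{-1}|\widehat{\psi}_0|+|\xi|^{-2}|\widehat{\psi}_1|$ for $a=1$, or whether the high-frequency bound there must be weakened by one power of $|\xi|$ on the $\psi$-data (which would cost one extra Sobolev derivative on $\psi_0,\psi_1$ wherever that proposition is applied).
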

\begin{proof}
We recall the asymptotic expansions of characteristic roots in Subsection \ref{Subsection-Charcteristic-Roots} that
\begin{align*}
	\lambda_{\mathrm{R}}^{(1)}&=-\frac{1}{2(1-a^2)^2}|\xi|^{-2}+O(|\xi|^{-3}),\ \ \lambda_{\mathrm{I}}^{(1)}=|\xi|+\frac{1}{2(1-a^2)}|\xi|^{-1}+O(|\xi|^{-3}),\\
	\lambda_{\mathrm{R}}^{(2)}&=-\frac{1}{2}+O(|\xi|^{-1}),\ \, \qquad\qquad\qquad
	\lambda_{\mathrm{I}}^{(2)}=a|\xi|+O(|\xi|^{-1}),
\end{align*}
in the case $a\neq 1$. Directly plugging the last expansions into the representations of $\det(\mb{V}_j^{(\ell)})$ for all $j,\ell\in\{1,2\}$, one finds
\begin{align*}
	\chi_{\extt}(\xi)\sum\limits_{j,\ell\in\{1,2\}}|\det(\mb{V}_j^{(\ell)})|\lesssim\chi_{\extt}(\xi)\sum\limits_{k=0,\dots,3}|\xi|^{6-k}|\widehat{w}_k|.
\end{align*}
Then, because of $\det(\mb{V})=-4a(1-a^2)^2|\xi|^6+O(|\xi|^5)$ and $\lambda_{\mathrm{R}}^{(1)}\sim -c|\xi|^{-2}$ for $\xi\in\ml{Z}_{\extt}(N_0)$, one may apply the bounded estimates $|\cos(\lambda_{\mathrm{I}}^{(k)}t)|, |\sin(\lambda_{\mathrm{I}}^{(k)}t)|\leqslant 1$ for $k=1,2$
to derive
\begin{align*}
	\chi_{\extt}(\xi)|\widehat{w}|&\lesssim \chi_{\extt}(\xi)\,\mathrm{e}^{-c|\xi|^{-2}t}\left(|\widehat{w}_0|+|\xi|^{-1}|\widehat{w}_1|+|\xi|^{-2}|\widehat{w}_2|+|\xi|^{-3}|\widehat{w}_3|\right)\\
	&\lesssim\chi_{\extt}(\xi)\,\mathrm{e}^{-c|\xi|^{-2}t}\left(|\widehat{w}_0|+|\xi|^{-1}|\widehat{w}_1|+|\xi|^{-1}|\widehat{\psi}_0|+|\xi|^{-2}|\widehat{\psi}_1|\right),
\end{align*}
which completes our proof for the general case $a\neq 1$. The case $a=1$ can be treated similarly.
\end{proof}

With analogous ways, concerning $k=0,\dots,3$, we may have
\begin{align*}
	\chi_{\extt}(\xi)|\mathrm{d}_t^k\widehat{w}|\lesssim
	\chi_{\extt}(\xi)|\xi|^{k}\left(|\widehat{w}_0|+|\xi|^{-1}|\widehat{w}_1|+|\xi|^{-1}|\widehat{\psi}_0|+|\xi|^{-2}|\widehat{\psi}_1|\right)\times \begin{cases}
		\mathrm{e}^{-ct}&\mbox{if}\ \ a=1,\\
		\mathrm{e}^{-c|\xi|^{-2}t}&\mbox{if}\ \ a\neq1,
	\end{cases}
\end{align*}
where the factor $|\xi|^k$ comes from the $k$ order time-derivative of $\sin(\lambda_{\mathrm{I}}^{(1,2)}t)$ and  $\cos(\lambda_{\mathrm{I}}^{(1,2)}t)$ with $\lambda_{\mathrm{I}}^{(1,2)}\sim c|\xi|$ for large frequencies. It is trivial from  the Plancherel theorem  that
\begin{align}\label{Est-well-posed-01}
\|\chi_{\extt}(D)\partial_t^kw(t,\cdot)\|_{H^{3-k}}&\lesssim\|\chi_{\extt}(\xi)|\xi|^{3-k}\mathrm{d}_t^k\widehat{w}(t,\xi)\|_{L^2}\notag\\
&\lesssim\left\|\big(\langle\xi\rangle^3|\widehat{w}_0|+\langle\xi\rangle^2|\widehat{w}_1|+\langle\xi\rangle^2|\widehat{\psi}_0|+\langle\xi\rangle|\widehat{\psi}_1| \big)\right\|_{L^2}\notag\\
&\lesssim\|(w_0,w_1),(\psi_0,\psi_1)\|_{(H^3\times H^2)\times(H^2\times H^1)}.
\end{align}
Due to the fact that the well-posed result of linear Cauchy problem is determined by the part for large frequencies,  the last estimate verifies the well-posedness for $w$ in \eqref{Well-posedness} provided that $(w_0,w_1)\in H^3\times H^2$ and $(\psi_0,\psi_1)\in H^2\times H^1$.

\subsection{The rotation angle in the Fourier space}\label{Subsection-Rotational-Angle}
\hspace{5mm}Recalling the rotation angle $\widehat{\psi}$ in Subsection \ref{Subsection-Reduction} one knows
\begin{align*}
	\widehat{\psi}_2=-(1+a^2|\xi|^2)\widehat{\psi}_0-\widehat{\psi}_1+i\xi\widehat{w}_0\ \ \mbox{and}\ \ \widehat{\psi}_3=(1+a^2|\xi|^2)\widehat{\psi}_0-a^2|\xi|^2\widehat{\psi}_1-i\xi\widehat{w}_0+i\xi\widehat{w}_1.
\end{align*}
Let us apply the representation \eqref{Rep-wide-u} with $\widehat{u}=\widehat{\psi}$ to derive the next result for small frequencies, whose proof is similar to the one of Proposition \ref{Prop-w-small} so we omit some details.
\begin{prop}\label{Prop-psi-small}
	Let $\xi\in\ml{Z}_{\intt}(\varepsilon_0)$ with $0<\varepsilon_0\ll 1$. Then, $\widehat{\psi}$ fulfills the following pointwise estimates in the Fourier space:
	\begin{align*}
		\chi_{\intt}(\xi)|\widehat{\psi}|\lesssim \chi_{\intt}(\xi)\left(1+\frac{|\sin(c_a|\xi|^2t)|}{c_a|\xi|}\right)\mathrm{e}^{-c|\xi|^2t}\left(|\widehat{w}_0|+|\widehat{w}_1|+|\widehat{\psi}_0|+|\widehat{\psi}_1| \right)
	\end{align*}
	as well as
	\begin{align}\label{Error-psi02}
		\chi_{\intt}(\xi)|\widehat{\psi}-i\xi\widehat{\ml{G}}(t,|\xi|)\widehat{w}_1|\lesssim\chi_{\intt}(\xi)\,\mathrm{e}^{-c|\xi|^2t}\left(|\widehat{w}_0|+|\widehat{w}_1|+|\widehat{\psi}_0|+|\widehat{\psi}_1| \right), 
	\end{align}
where the diffusion plate factor $\widehat{\ml{G}}(t,|\xi|)$ is showed in \eqref{Diff-Plate-Factor}.
\end{prop}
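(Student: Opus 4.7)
The plan is to follow the blueprint of Proposition~\ref{Prop-w-small}, now using the representation \eqref{Rep-wide-u} with $\widehat{u}=\widehat{\psi}$ together with
\[
\widehat{\psi}_2=-(1+a^2|\xi|^2)\widehat{\psi}_0-\widehat{\psi}_1+i\xi\widehat{w}_0,\qquad \widehat{\psi}_3=(1+a^2|\xi|^2)\widehat{\psi}_0-a^2|\xi|^2\widehat{\psi}_1-i\xi\widehat{w}_0+i\xi\widehat{w}_1,
\]
and the small-frequency root expansions recalled in Subsection~\ref{Subsection-Charcteristic-Roots}. Splitting $\widehat{\psi}=\widehat{\psi}^{(1)}+\widehat{\psi}^{(2)}$, the first summand enjoys exponential decay because $\lambda_{\mathrm{R}}^{(1)}=-\tfrac{1}{2}+O(|\xi|^2)$; together with the crude bound $|\widehat{\psi}_k|\lesssim|\widehat{w}_0|+|\widehat{w}_1|+|\widehat{\psi}_0|+|\widehat{\psi}_1|$ for $k=2,3$ on $\ml{Z}_{\intt}(\varepsilon_0)$ this furnishes the analogue of \eqref{Exponential-Decay-small}, so the effective behaviour sits in $\widehat{\psi}^{(2)}$.

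For the cosine half of $\widehat{\psi}^{(2)}$ I would verify the analogue of the cancellation \eqref{cancel}: the three $|\xi|^0$-contributions carrying $\widehat{\psi}_1$, $\widehat{\psi}_2$ and $\widehat{\psi}_3$ cancel between $\det(\mb{V}_2^{(1)})$ and $\det(\mb{V}_2^{(2)})$, and the substitution of $\widehat{\psi}_2,\widehat{\psi}_3$ shows that the residual $|\xi|^0$-coefficients of $\widehat{\psi}_0,\widehat{\psi}_1$ cancel identically as well. One therefore obtains
\[
\chi_{\intt}(\xi)\left|\det(\mb{V}_2^{(1)})+\det(\mb{V}_2^{(2)})\right|\lesssim\chi_{\intt}(\xi)|\xi|^2\left(|\widehat{w}_0|+|\widehat{w}_1|+|\widehat{\psi}_0|+|\widehat{\psi}_1|\right),
\]
which, combined with \eqref{det-V-small} and $\mathrm{e}^{\lambda_{\mathrm{R}}^{(2)}t}\sim\mathrm{e}^{-|\xi|^2t/2}$, contributes only a harmless $O(\mathrm{e}^{-c|\xi|^2t})$-piece.

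The genuine difficulty lies in the sine half, where the expected profile $i\xi\widehat{\ml{G}}(t,|\xi|)\widehat{w}_1$ carries the weaker singularity $|\xi|^{-1}$ in place of the $|\xi|^{-2}$ of Proposition~\ref{Prop-w-small}. I must therefore extract from $\det(\mb{V}_2^{(1)})-\det(\mb{V}_2^{(2)})$ a single surviving term of order $|\xi|$ on the data, rather than of order $|\xi|^0$. Using $\lambda_{\mathrm{R}}^{(1)}=-\tfrac{1}{2}+O(|\xi|^2)$ and $\lambda_{\mathrm{I}}^{(1)}=\tfrac{\sqrt{3}}{2}+O(|\xi|^2)$, the three leading coefficients of $\widehat{\psi}_1,\widehat{\psi}_2,\widehat{\psi}_3$ in $\det(\mb{V}_2^{(1)})-\det(\mb{V}_2^{(2)})$ all equal $2\sqrt{3}\,i+O(|\xi|^2)$; plugging in the $|\xi|^0$-parts of $\widehat{\psi}_2$ and $\widehat{\psi}_3$ yields $2\sqrt{3}\,i\bigl[\widehat{\psi}_1+(-\widehat{\psi}_0-\widehat{\psi}_1)+\widehat{\psi}_0\bigr]=0$, the order-$|\xi|$ contributions $i\xi\widehat{w}_0$ in $\widehat{\psi}_2$ and $-i\xi\widehat{w}_0$ in $\widehat{\psi}_3$ also cancel, and the only surviving $|\xi|$-term is $2\sqrt{3}\,i\cdot i\xi\widehat{w}_1=-2\sqrt{3}\,\xi\widehat{w}_1$. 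This delivers
\[
\chi_{\intt}(\xi)\left|\det(\mb{V}_2^{(1)})-\det(\mb{V}_2^{(2)})+2\sqrt{3}\,\xi\widehat{w}_1\right|\lesssim\chi_{\intt}(\xi)|\xi|^2\left(|\widehat{w}_0|+|\widehat{w}_1|+|\widehat{\psi}_0|+|\widehat{\psi}_1|\right),
\]
and since $-2\sqrt{3}/(-\sqrt{3(4a^2-1)})=1/c_a$, dividing by $\det(\mb{V})$ produces exactly the factor $\xi/(c_a|\xi|^2)$. A chain triangle inequality using the already-established approximations $|\sin(\lambda_{\mathrm{I}}^{(2)}t)-\sin(c_a|\xi|^2t)|\lesssim|\xi|^4t$ and $|\mathrm{e}^{\lambda_{\mathrm{R}}^{(2)}t}-\mathrm{e}^{-|\xi|^2t/2}|\lesssim|\xi|^2\mathrm{e}^{-c|\xi|^2t}$ then gives \eqref{Error-psi02}, and the pointwise upper bound follows by the triangle inequality together with $|i\xi\widehat{\ml{G}}(t,|\xi|)|=|\sin(c_a|\xi|^2t)|\mathrm{e}^{-|\xi|^2t/2}/(c_a|\xi|)$. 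The main obstacle is the algebraic bookkeeping of these triple cancellations that isolate $\xi\widehat{w}_1$ as the unique surviving order-$|\xi|$ term.
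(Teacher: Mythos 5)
Your proposal is correct and follows essentially the same route as the paper's proof: the decomposition into $\widehat{\psi}^{(1)}+\widehat{\psi}^{(2)}$, the exponential decay of the $\lambda_{\mathrm{R}}^{(1)}\sim-\tfrac12$ branch, the reuse of the $|\xi|^{0}$-cancellations \eqref{cancel} for the cosine half, and the extraction of the surviving $-2\sqrt{3}\,\xi\widehat{w}_1$ term from $\det(\mb{V}_2^{(1)})-\det(\mb{V}_2^{(2)})$ by plugging in $\widehat{\psi}_2,\widehat{\psi}_3$ and observing that the $2\sqrt{3}i$-coefficients of $\widehat{\psi}_1$, $-\widehat{\psi}_0-\widehat{\psi}_1+i\xi\widehat{w}_0$ and $\widehat{\psi}_0-i\xi\widehat{w}_0+i\xi\widehat{w}_1$ collapse to $2\sqrt{3}i\cdot i\xi\widehat{w}_1$. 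The normalization $-2\sqrt{3}/(-\sqrt{3(4a^2-1)})=1/c_a$ and the chain triangle inequality closing step are likewise the paper's argument.
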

\begin{proof}
Reviewing the asymptotic expansions of $\lambda_{\mathrm{R}}^{(1,2)}$ and $\lambda_{\mathrm{I}}^{(1,2)}$, one may derive
\begin{align*}
\chi_{\intt}(\xi)\left(|\det(\mb{V}_1^{(1)})|+|\det(\mb{V}_1^{(2)})|\right)\lesssim\chi_{\intt}(\xi)\left[|\xi|^3(|\widehat{w}_0|+|\widehat{w}_1|)+|\xi|^2(|\widehat{\psi}_0|+|\widehat{\psi}_1|) \right]
\end{align*}
and, from \eqref{det-V-small},
\begin{align*}
\chi_{\intt}(\xi)|\widehat{\psi}^{(1)}|\lesssim\chi_{\intt}(\xi)\,\mathrm{e}^{-ct}\left(|\widehat{w}_0|+|\widehat{w}_1|+|\widehat{\psi}_0|+|\widehat{\psi}_1|\right).
\end{align*}

We are now turning to the estimate for $\chi_{\intt}(\xi)\widehat{\psi}^{(2)}$ where one may look back \eqref{Rep-wide-u}. In regard to the cosine part, by the same cancellations of $|\xi|^0$-terms as \eqref{cancel}, we notice that \eqref{V+V} as well as \eqref{w-cos-part} still hold for the case with $\widehat{\psi}$ instead of $\widehat{w}$. For the sine part, one may estimate
\begin{align*}
	&\chi_{\intt}(\xi)\big|\left(\det(\mb{V}_2^{(1)})-\det(\mb{V}_2^{(2)})\right)-4i\lambda_{\mathrm{I}}^{(1)}\left[(\lambda_{\mathrm{R}}^{(1)})^2+(\lambda_{\mathrm{I}}^{(1)})^2\right]^2\widehat{\psi}_1 \\
	&\qquad\quad\,+8i\lambda_{\mathrm{R}}^{(1)}\lambda_{\mathrm{I}}^{(1)}\left[(\lambda_{\mathrm{R}}^{(1)})^2+(\lambda_{\mathrm{I}}^{(1)})^2\right]\left(-\widehat{\psi}_0-\widehat{\psi}_1+i\xi\widehat{w}_0\right)\\
	&\qquad\quad\,-4i\lambda_{\mathrm{I}}^{(1)}\left[(\lambda_{\mathrm{R}}^{(1)})^2+(\lambda_{\mathrm{I}}^{(1)})^2\right]\left(\widehat{\psi}_0-i\xi\widehat{w}_0+i\xi\widehat{w}_1\right)\big|\\
	&\lesssim\chi_{\intt}(\xi)|\xi|^2\left(|\widehat{w}_0|+|\widehat{w}_1|+|\widehat{\psi}_0|+|\widehat{\psi}_1| \right).
\end{align*}
Thanks to the asymptotic expansions of characteristic roots stated in Subsection \ref{Subsection-Charcteristic-Roots}, there are some cancellations for the leading coefficients of $\widehat{\psi}_0$, $\widehat{\psi}_1$, $\widehat{w}_0$ so that
\begin{align*}
&\chi_{\intt}(\xi)\big|4i\lambda_{\mathrm{I}}^{(1)}\left[(\lambda_{\mathrm{R}}^{(1)})^2+(\lambda_{\mathrm{I}}^{(1)})^2\right]^2\widehat{\psi}_1-8i\lambda_{\mathrm{R}}^{(1)}\lambda_{\mathrm{I}}^{(1)}\left[(\lambda_{\mathrm{R}}^{(1)})^2+(\lambda_{\mathrm{I}}^{(1)})^2\right]\left(-\widehat{\psi}_0-\widehat{\psi}_1+i\xi\widehat{w}_0\right) \\
&\qquad\quad\,+4i\lambda_{\mathrm{I}}^{(1)}\left[(\lambda_{\mathrm{R}}^{(1)})^2+(\lambda_{\mathrm{I}}^{(1)})^2\right]\left(\widehat{\psi}_0-i\xi\widehat{w}_0+i\xi\widehat{w}_1\right)-(-2\sqrt{3}\,\xi\widehat{w}_1)\big|\\
&=\chi_{\intt}(\xi)\left|\big(2\sqrt{3}i+O(|\xi|^2)\big)\widehat{\psi}_1+\big(2\sqrt{3}i+O(|\xi|^2)\big)\left(-\widehat{\psi}_0-\widehat{\psi}_1+i\xi\widehat{w}_0\right)\right.\\
&\qquad\quad\quad\   \left.+\big(2\sqrt{3}i+O(|\xi|^2)\big)\left(\widehat{\psi}_0-i\xi\widehat{w}_0+i\xi\widehat{w}_1\right)+2\sqrt{3}\,\xi\widehat{w}_1\right|\\
&\lesssim\chi_{\intt}(\xi)|\xi|^2\left(|\widehat{w}_0|+|\widehat{w}_1|+|\widehat{\psi}_0|+|\widehat{\psi}_1| \right),
\end{align*}
which leads to
\begin{align*}
\chi_{\intt}(\xi)\left|\left(\det(\mb{V}_2^{(1)})-\det(\mb{V}_2^{(2)})\right)-(-2\sqrt{3}\,\xi\widehat{w}_1)\right|\lesssim\chi_{\intt}(\xi)|\xi|^2\left(|\widehat{w}_0|+|\widehat{w}_1|+|\widehat{\psi}_0|+|\widehat{\psi}_1| \right).
\end{align*}
Analogously to the proof of Proposition \ref{Prop-w-small} we immediately conclude
\begin{align*}
\chi_{\intt}(\xi)\left|\widehat{\psi}^{(2)}-i\xi\frac{\sin(c_a|\xi|^2t)}{c_a|\xi|^2}\,\mathrm{e}^{-\frac{1}{2}|\xi|^2t}\widehat{w}_1\right|\lesssim\chi_{\intt}(\xi)\,\mathrm{e}^{-c|\xi|^2t}\left(|\widehat{w}_0|+|\widehat{w}_1|+|\widehat{\psi}_0|+|\widehat{\psi}_1| \right) 
\end{align*}
as well as
\begin{align*}
\chi_{\intt}(\xi)|\widehat{\psi}^{(2)}|\lesssim \chi_{\intt}(\xi)\left(1+\frac{|\sin(c_a|\xi|^2t)|}{c_a|\xi|}\right)\mathrm{e}^{-c|\xi|^2t}\left(|\widehat{w}_0|+|\widehat{w}_1|+|\widehat{\psi}_0|+|\widehat{\psi}_1| \right). 
\end{align*}
Consequently, summarizing the derived estimates in the above our proof is complete.
\end{proof}
\begin{remark}\label{Rem-Fourier-Cancellation}
According to Propositions \ref{Prop-w-small} and \ref{Prop-psi-small}, we notice a cancellation between the leading terms of $i\xi\widehat{w}$ and $\widehat{\psi}$ for small frequencies such that
\begin{align}
\chi_{\intt}(\xi)|i\xi\widehat{w}-\widehat{\psi}|&\lesssim \chi_{\intt}(\xi)\left|i\xi\widehat{w}-\widehat{\ml{G}}(t,|\xi|)\left(i\xi\widehat{w}_1+|\xi|^2(\widehat{\psi}_0+\widehat{\psi}_1)\right)\right|+\chi_{\intt}(\xi)\left|i\xi\widehat{\ml{G}}(t,|\xi|)\widehat{w}_1-\widehat{\psi}\right|\notag\\
&\quad+\chi_{\intt}(\xi)|\xi|^2\widehat{\ml{G}}(t,|\xi|)|\widehat{\psi}_0+\widehat{\psi}_1|\notag\\
&\lesssim\chi_{\intt}(\xi)(1+|\xi|)\,\mathrm{e}^{-c|\xi|^2t}\left(|\widehat{w}_0|+|\widehat{w}_1|+|\widehat{\psi}_0|+|\widehat{\psi}_1| \right),\label{oo-Err-01}
\end{align}
in which the leading term $i\xi\widehat{\ml{G}}(t,|\xi|)\widehat{w}_1$ of $i\xi\widehat{w}$ and $\widehat{\psi}$ was compensated by each other. It reveals the pointwise estimate \eqref{Point-wise-Kawashima} from \cite[Proposition 2.1]{Ide-Haramoto-Kawashima=2008} with the diffusion kernel $\mathrm{e}^{-c|\xi|^2t}$ but without the singularity $|\xi|^{-1}$ as those in $i\xi\widehat{w}$ or $\widehat{\psi}$ for small frequencies.
\end{remark}

By the same procedure as the proof of Proposition \ref{Prop-w-large} associated with the expressions of initial data $\widehat{\psi}_2$ and $\widehat{\psi}_3$, we immediately claim the next result.
\begin{prop}\label{Prop-psi-large}
	Let $\xi\in\ml{Z}_{\extt}(N_0)$ with $N_0\gg 1$. Then, $\widehat{\psi}$ fulfills the following pointwise estimates in the Fourier space:
	\begin{align*}
		\chi_{\extt}(\xi)|\widehat{\psi}|\lesssim
		\chi_{\extt}(\xi)\left(|\xi|^{-1}|\widehat{w}_0|+|\xi|^{-2}|\widehat{w}_1|+|\widehat{\psi}_0|+|\xi|^{-1}|\widehat{\psi}_1|\right)\times \begin{cases}
			\mathrm{e}^{-ct}&\mbox{if}\ \ a=1,\\
			\mathrm{e}^{-c|\xi|^{-2}t}&\mbox{if}\ \ a\neq1.
		\end{cases}
	\end{align*}
\end{prop}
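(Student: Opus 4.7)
My plan is to mirror the proof of Proposition \ref{Prop-w-large}, using the representation \eqref{Rep-wide-u} with $\widehat{u}=\widehat{\psi}$ together with the asymptotic expansions of characteristic roots for $\xi\in\ml{Z}_{\extt}(N_0)$ collected in Subsection \ref{Subsection-Charcteristic-Roots}. The only difference with the transversal displacement is that now the ``effective'' initial data $\widehat{\psi}_2,\widehat{\psi}_3$ carry different weights in $(\widehat{w}_0,\widehat{w}_1,\widehat{\psi}_0,\widehat{\psi}_1)$, so the final weights on each datum will be shifted compared to Proposition \ref{Prop-w-large}.

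First, I would insert the large-frequency expansions of $\lambda_{\mathrm{R}}^{(k)},\lambda_{\mathrm{I}}^{(k)}$ for $a\ne 1$ (and analogously $a=1$) into $\det(\mb{V})$ and $\det(\mb{V}_j^{(\ell)})$. Exactly as in the proof of Proposition \ref{Prop-w-large}, one gets $\det(\mb{V})=-4a(1-a^2)^2|\xi|^6+O(|\xi|^5)$, and since each $\lambda_j$ behaves like $O(|\xi|)$ for large $|\xi|$, a direct expansion yields
\begin{equation*}
\chi_{\extt}(\xi)\sum\limits_{j,\ell\in\{1,2\}}|\det(\mb{V}_j^{(\ell)})|\lesssim \chi_{\extt}(\xi)\sum\limits_{k=0}^{3}|\xi|^{6-k}|\widehat{\psi}_k|.
\end{equation*}
Combining with the bounded oscillatory factors $|\cos(\lambda_{\mathrm{I}}^{(k)}t)|,|\sin(\lambda_{\mathrm{I}}^{(k)}t)|\leqslant 1$ and the worst exponential factor (dictated by $\lambda_{\mathrm{R}}^{(1)}\sim -c|\xi|^{-2}$ when $a\ne 1$, respectively by $\lambda_{\mathrm{R}}^{(k)}\leqslant -c$ when $a=1$), this gives
\begin{equation*}
\chi_{\extt}(\xi)|\widehat{\psi}|\lesssim \chi_{\extt}(\xi)\bigl(|\widehat{\psi}_0|+|\xi|^{-1}|\widehat{\psi}_1|+|\xi|^{-2}|\widehat{\psi}_2|+|\xi|^{-3}|\widehat{\psi}_3|\bigr)\times\begin{cases}\mathrm{e}^{-ct}&\mbox{if}\ \ a=1,\\ \mathrm{e}^{-c|\xi|^{-2}t}&\mbox{if}\ \ a\ne 1.\end{cases}
\end{equation*}

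Next, I would substitute the expressions for $\widehat{\psi}_2$ and $\widehat{\psi}_3$ recalled at the start of Subsection \ref{Subsection-Rotational-Angle}. For $\xi\in\ml{Z}_{\extt}(N_0)$ these yield
\begin{equation*}
|\widehat{\psi}_2|\lesssim |\xi|^2|\widehat{\psi}_0|+|\widehat{\psi}_1|+|\xi||\widehat{w}_0|,\qquad |\widehat{\psi}_3|\lesssim|\xi|^2\bigl(|\widehat{\psi}_0|+|\widehat{\psi}_1|\bigr)+|\xi|\bigl(|\widehat{w}_0|+|\widehat{w}_1|\bigr).
\end{equation*}
Plugging these into the previous bound and reorganizing, the terms $|\xi|^{-2}|\widehat{\psi}_2|$ and $|\xi|^{-3}|\widehat{\psi}_3|$ contribute $|\widehat{\psi}_0|+|\xi|^{-1}|\widehat{\psi}_1|+|\xi|^{-1}|\widehat{w}_0|+|\xi|^{-2}|\widehat{w}_1|$ (plus lower order pieces that are absorbed), which is exactly the claimed pointwise bound. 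The case $a=1$ is completely analogous with the corresponding expansions.

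The proof is essentially a bookkeeping exercise, and in contrast to the small-frequency regime of Propositions \ref{Prop-w-small} and \ref{Prop-psi-small} no cancellation between $\det(\mb{V}_k^{(1)})$ and $\det(\mb{V}_k^{(2)})$ is needed: the naive $|\xi|$-powers already produce the correct weights after division by $\det(\mb{V})\sim |\xi|^6$. The only delicate point, and the sole ``obstacle'', is the careful tracking of the different $|\xi|$-powers attached to each initial datum once $\widehat{\psi}_2,\widehat{\psi}_3$ are re-expressed in terms of $(\widehat{w}_0,\widehat{w}_1,\widehat{\psi}_0,\widehat{\psi}_1)$, together with verifying that the slowest exponential decay indeed comes from $\lambda_{\mathrm{R}}^{(1)}$ when $a\ne 1$.
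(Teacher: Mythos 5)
Your proposal is correct and follows exactly the argument the paper uses: it mirrors the proof of Proposition \ref{Prop-w-large}, bounding $\chi_{\extt}(\xi)|\widehat{\psi}|$ by $\sum_{k=0}^{3}|\xi|^{-k}|\widehat{\psi}_k|$ times the slowest exponential factor, and then re-expressing $\widehat{\psi}_2,\widehat{\psi}_3$ in terms of the original initial data. This is precisely what the paper indicates by stating that Proposition \ref{Prop-psi-large} follows ``by the same procedure as the proof of Proposition \ref{Prop-w-large} associated with the expressions of initial data $\widehat{\psi}_2$ and $\widehat{\psi}_3$.''
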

Following \eqref{Est-well-posed-01} and Proposition \ref{Prop-psi-large} one gets
\begin{align*}
\|\chi_{\extt}(D)\partial_t^k\psi(t,\cdot)\|_{H^{3-k}}\lesssim\|(w_0,w_1),(\psi_0,\psi_1)\|_{(H^2\times H^1)\times (H^3\times H^2) }
\end{align*}
for $k=0,\dots,3$, which justifies the well-posedness for $\psi$ in \eqref{Well-posedness} provided that $(w_0,w_1)\in (H^2\times H^1)$ and $(\psi_0,\psi_1)\in (H^3\times H^2)$.

\section{Large time asymptotic behavior of solutions in the $L^2$ norm}\label{Section_L2-norm}\setcounter{equation}{0}
\subsection{Preliminary on optimal growth/decay estimates}
\hspace{5mm}As a preparation, motivated by our desired factors $\chi_{\intt}(\xi)\widehat{\ml{G}}(t,|\xi|)$ and $\chi_{\intt}(\xi)\xi\widehat{\ml{G}}(t,|\xi|)$ in Propositions \ref{Prop-w-small} and \ref{Prop-psi-small}, respectively, we next derive large time optimal growth/decay estimates for the following time-dependent function:
\begin{align*}
\ml{I}(t;k):=\left\|
\chi_{\intt}(\xi)\frac{|\sin(c_a|\xi|^2t)|}{c_a|\xi|^{k}}\,\mathrm{e}^{-c|\xi|^2t} \right\|_{L^2}
\end{align*}
with $k=0,1,2$ and $c>0$. Notice that the factor $|\xi|^{-k}$ may exert a strong singularity (with different degree depending on $k$) as $|\xi|\to 0$. Thus, it is interesting to understand an interplay among the dissipative part $\mathrm{e}^{-c|\xi|^2t}$, the oscillating part $\sin(c_a|\xi|^2t)$, the singularity $|\xi|^{-k}$ (if $\xi\in\ml{Z}_{\intt}(\varepsilon_0)$ as well as $k=1,2$), and their contributions to optimal large time estimates.
\begin{lemma}\label{Lemma-optimal}
Let $k=0,1,2$. It satisfies the following optimal estimates:
\begin{align*}
\ml{I}(t;k)\approx t^{\frac{2k-1}{4}}
\end{align*}
for large time $t\gg1$, which decays if $k=0$ whereas grows polynomially if $k=1,2$.
\end{lemma}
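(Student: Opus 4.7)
The plan is to reduce the problem to a standard parabolic rescaling. Since we are in one spatial dimension and the integrand is even in $\xi$, I would first write
\begin{align*}
\mathcal{I}(t;k)^2 = 2\int_0^{+\infty}\chi_{\intt}(\xi)^2\,\frac{\sin^2(c_a\xi^2 t)}{c_a^2\,\xi^{2k}}\,\mathrm{e}^{-2c\xi^2 t}\,\mathrm{d}\xi,
\end{align*}
and then perform the substitution $\eta = \xi\sqrt{t}$, which is the natural scaling dictated by the Gaussian factor $\mathrm{e}^{-2c\xi^2 t}$. The Jacobian $\mathrm{d}\xi = t^{-1/2}\,\mathrm{d}\eta$ combined with the singular factor $\xi^{-2k} = t^k\eta^{-2k}$ produces an overall prefactor of $t^{k-1/2}$, leaving
\begin{align*}
\mathcal{I}(t;k)^2 = 2\,t^{k-\frac{1}{2}}\int_0^{+\infty}\chi_{\intt}(\eta/\sqrt{t})^2\,\frac{\sin^2(c_a\eta^2)}{c_a^2\,\eta^{2k}}\,\mathrm{e}^{-2c\eta^2}\,\mathrm{d}\eta.
\end{align*}

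Next I would pass to the limit as $t\to+\infty$. For each fixed $\eta>0$, the cutoff $\chi_{\intt}(\eta/\sqrt{t})^2 \to 1$, and since $0\leqslant\chi_{\intt}\leqslant 1$ the integrand is dominated by $c_a^{-2}\sin^2(c_a\eta^2)\eta^{-2k}\mathrm{e}^{-2c\eta^2}$, which is independent of $t$. The dominated convergence theorem then yields
\begin{align*}
\lim_{t\to+\infty}\,t^{\frac{1}{2}-k}\,\mathcal{I}(t;k)^2 = 2\int_0^{+\infty}\frac{\sin^2(c_a\eta^2)}{c_a^2\,\eta^{2k}}\,\mathrm{e}^{-2c\eta^2}\,\mathrm{d}\eta=:C_k,
\end{align*}
which simultaneously delivers both the upper bound $\mathcal{I}(t;k)\lesssim t^{(2k-1)/4}$ and the lower bound $\mathcal{I}(t;k)\gtrsim t^{(2k-1)/4}$ for $t\gg 1$, provided that $C_k$ is finite and strictly positive.

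The only technical check is this finiteness-and-positivity of $C_k$, which becomes delicate near $\eta=0$ exactly when $k=1,2$. Here I would use the Taylor expansion $\sin(c_a\eta^2)=c_a\eta^2+O(\eta^6)$, so that $\sin^2(c_a\eta^2)\eta^{-2k}\sim c_a^2\eta^{4-2k}$ as $\eta\to 0^+$; this integrand is continuous and $O(\eta^{4-2k})$ near the origin, giving integrability uniformly for $k=0,1,2$, while the Gaussian factor $\mathrm{e}^{-2c\eta^2}$ handles the tail. Positivity is immediate because the integrand is non-negative and strictly positive on a non-trivial interval (e.g.\ near any $\eta$ where $c_a\eta^2$ is not a multiple of $\pi$). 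The main obstacle in the argument is therefore not analytical but bookkeeping: one must see that, after the parabolic change of variables, the seemingly strong singularity $|\xi|^{-2k}$ is perfectly balanced by the higher-order vanishing of $\sin^2(c_a\xi^2 t)$ at the origin, so that the degree $k=0,1,2$ smoothly interpolates between decay and polynomial growth with the sharp exponent $(2k-1)/4$.
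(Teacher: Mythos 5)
Your proposal is correct, and the upper-bound computation (the parabolic substitution $\eta = \xi\sqrt{t}$, pulling out the prefactor $t^{k-1/2}$, and bounding the resulting $\eta$-integral) coincides with the paper's. Where you diverge is the lower bound: the paper shrinks the integration domain to the window $|\xi|\in[\alpha_0 t^{-1/2},2\alpha_0 t^{-1/2}]$, on which $c_a|\xi|^2t$ stays inside a fixed subinterval of $(0,\pi)$ so that $|\sin(c_a|\xi|^2t)|\geqslant C_0>0$, and then bounds the remaining integral trivially; you instead invoke the dominated convergence theorem (with the integrable majorant $c_a^{-2}\sin^2(c_a\eta^2)\eta^{-2k}\mathrm{e}^{-2c\eta^2}$ and the pointwise convergence $\chi_{\intt}(\eta/\sqrt{t})\to\chi_{\intt}(0)=1$, which is legitimate because $\chi_{\intt}\equiv 1$ near the origin by the partition-of-unity normalization) to establish the sharper statement $t^{1/2-k}\,\mathcal{I}(t;k)^2\to C_k\in(0,+\infty)$. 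Your route is cleaner in that it delivers upper and lower bounds simultaneously and even pins down the exact leading constant; the paper's route is more elementary, avoids measure-theoretic limit theorems, and its upper-bound step happens to also give the uniform bound $\mathcal{I}(t;k)\lesssim 1$ for $t\ll 1$, which the paper explicitly records and reuses in Subsection 5.2, whereas your DCT argument only addresses the $t\to+\infty$ regime. For the lemma as stated, either approach suffices.
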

\begin{proof}
Clearly, our forthcoming target is to estimate
\begin{align*}
[\ml{I}(t;k)]^2=C\int_0^{\varepsilon_0}|\sin(c_a|\xi|^2t)|^2\,|\xi|^{-2k}\,\mathrm{e}^{-2c|\xi|^2t}\,\mathrm{d}|\xi|
\end{align*}
for $k=0,1,2$. Note that one may generalize our estimates for a general parameter $k$.

For one thing, by setting an ansatz $\eta=|\xi|t^{1/2}$ one derives the upper bound estimate
\begin{align*}
[\ml{I}(t;k)]^2&\lesssim t^{k-\frac{1}{2}}\int_0^{\varepsilon_0t^{1/2}}|\sin(c_a\eta^2)|^2\,\eta^{-2k}\,\mathrm{e}^{-2c\eta^2}\,\mathrm{d}\eta\\
&\lesssim t^{k-\frac{1}{2}}\int_0^{+\infty}\left|\frac{\sin(c_a\eta^2)}{\eta^{k}}\right|^2\mathrm{e}^{-2c\eta^2}\,\mathrm{d}\eta\lesssim t^{\frac{2k-1}{2}},
\end{align*}
where we applied the useful inequality with any $\eta\in(0,+\infty)$ that
\begin{align*}
\left|\frac{\sin(c_a\eta^2)}{\eta^{k}}\right|^2\lesssim \left|\frac{\sin(c_a\eta^2)}{c_a\eta^2}\right|^{k}\lesssim 1\ \ \mbox{for}\ \ k=0,1,2.
\end{align*}
It will be used in Subsection \ref{Sub-new-linear} that $[\ml{I}(t;k)]^2\lesssim 1$ for $k=0,1,2$ if $t\ll 1$ due to the previous bounded estimate.

To guarantee the optimality of last upper bound estimate for large time, we then investigate its lower bound. Let us choose a small parameter $\alpha_0>0$ such that $\alpha_0^2<\pi/(4c_a)$. Thus, there exists a positive constant $C_0$ such that
\begin{align*}
1\geqslant |\sin(c_a|\xi|^2t)|\geqslant C_0>0\ \ \mbox{for any}\ \ |\xi|\in[\alpha_0t^{-\frac{1}{2}},2\alpha_0 t^{-\frac{1}{2}}],
\end{align*}
because of $c_a|\xi|^2t\in[\alpha_0^2c_a,4\alpha_0^2c_a]\subset(0,\pi)$ with $c_a>0$. We furthermore take large time $t\gg1$ so that $2\alpha_0t^{-\frac{1}{2}}<\varepsilon_0$ holds.  One may shrink the domain of the integral from $[0,\varepsilon_0]$ to $[\alpha_0t^{-1/2},2\alpha_0 t^{-1/2}]$, and obtain the lower bound estimate
\begin{align*}
[\ml{I}(t;k)]^2&\gtrsim\int_{\alpha_0t^{-1/2}}^{2\alpha_0t^{-1/2}}|\sin(c_a|\xi|^2t)|^2\,|\xi|^{-2k}\,\mathrm{e}^{-2c|\xi|^2t}\mathrm{d}|\xi|\\
&\gtrsim t^{k}\int_{\alpha_0t^{-1/2}}^{2\alpha_0t^{-1/2}}\mathrm{d}|\xi|\gtrsim t^{\frac{2k-1}{2}}
\end{align*}
for large time $t\gg1$, which implies our desired optimal estimates.
\end{proof}

\subsection{Proof of Theorems \ref{Thm-w} and \ref{Thm-psi}}
\noindent\textbf{Optimal Upper Bound Estimates}: Firstly, an application of Lemma \ref{Lemma-optimal} with $k=0,2$ and Proposition \ref{Prop-w-small} yields
\begin{align*}
\|\chi_{\intt}(D)w(t,\cdot)\|_{L^2}&\lesssim\left\|\chi_{\intt}(\xi)\left(1+\frac{|\sin(c_a|\xi|^2t)|}{c_a|\xi|^2}\right)\mathrm{e}^{-c|\xi|^2t}\right\|_{L^2}\|(\widehat{w}_0,\widehat{w}_1),(\widehat{\psi}_0,\widehat{\psi}_1)\|_{(L^{\infty}\times L^{\infty})\times (L^{\infty}\times L^{\infty})}\\
&\lesssim t^{\frac{3}{4}}\|(w_0,w_1),(\psi_0,\psi_1)\|_{(L^1\times L^1)\times(L^1\times L^1)}
\end{align*}
for large time $t\gg1$, where the Hausdorff-Young inequality and the Plancherel theorem were used. Then, from Proposition \ref{Prop-w-large} one finds
\begin{align*}
\|\chi_{\extt}(D)w(t,\cdot)\|_{L^2}\lesssim \|(w_0,w_1),(\psi_0,\psi_1)\|_{(L^2\times H^{-1})\times( H^{-1}\times H^{-2})}
\end{align*}
for any $a>0$ due to the fact that $\mathrm{e}^{-ct},\mathrm{e}^{-c|\xi|^{-2}t}\leqslant 1$. Additionally from an exponential decay estimate for bounded frequencies via the negative roots \eqref{Negative-roots}, the following upper bound estimate holds:
\begin{align}\label{Est-02}
\|w(t,\cdot)\|_{L^2}&\lesssim \|\chi_{\intt}(D)w(t,\cdot)\|_{L^2}+\|\chi_{\bdd}(D)w(t,\cdot)\|_{L^2}+\|\chi_{\extt}(D)w(t,\cdot)\|_{L^2}\notag\\
&\lesssim t^{\frac{3}{4}}\|(w_0,w_1),(\psi_0,\psi_1)\|_{(\ml{D}_0^0\times\ml{D}_0^{-1})\times(\ml{D}_0^{-1}\times\ml{D}_0^{-2})}
\end{align}
for large time $t\gg1$. By the same way (i.e. Lemma \ref{Lemma-optimal} with $k=0,1$ and Proposition \ref{Prop-psi-small} for small frequencies; Proposition \ref{Prop-psi-large} for large frequencies) we are able to derive
\begin{align}
\|\psi(t,\cdot)\|_{L^2}\lesssim t^{\frac{1}{4}}\|(w_0,w_1),(\psi_0,\psi_1)\|_{(\ml{D}_0^{-1}\times\ml{D}_0^{-2})\times(\ml{D}_0^0\times \ml{D}_0^{-1})}\label{Est-03}
\end{align}
for large time $t\gg1$.\medskip

\noindent\textbf{Optimal Lower Bound Estimates}: To ensure the sharpness of \eqref{Est-02} and \eqref{Est-03}, we next estimate them from below. Indeed, \eqref{Error-w02} implies
\begin{align*}
\chi_{\intt}(\xi)|\widehat{w}-\widehat{\ml{G}}(t,|\xi|)\widehat{w}_1|\lesssim \chi_{\intt}(\xi)\left(1+\frac{|\sin(c_a|\xi|^2t)|}{c_a|\xi|}\right)\mathrm{e}^{-c|\xi|^2t}\left(|\widehat{w}_0|+|\widehat{w}_1|+|\widehat{\psi}_0|+|\widehat{\psi}_1| \right).
\end{align*}
 By the same manner as the last parts associated with the previous pointwise estimate in the Fourier space and Proposition \ref{Prop-w-large}, one obtains
\begin{align*}
\left\|\big(1-\chi_{\intt}(D)\big)\ml{\ml{G}}(t,|D|)w_1(\cdot)\right\|_{L^2}&\lesssim\mathrm{e}^{-ct}\|w_1\|_{H^{-2}},\\
\|w(t,\cdot)-\ml{G}(t,|D|)w_1(\cdot)\|_{L^2}&\lesssim t^{\frac{1}{4}}\|(w_0,w_1),(\psi_0,\psi_1)\|_{(\ml{D}_0^0\times\ml{D}_0^{-1})\times(\ml{D}_0^{-1}\times\ml{D}_0^{-2})},
\end{align*}
for large time $t\gg1$. Let us recall
\begin{align*}
\ml{F}_{x\to\xi}\big(G(t,x)\big)=\frac{\sin(c_a|\xi|^2t)}{c_a|\xi|^2}\,\mathrm{e}^{-\frac{1}{2}|\xi|^2t}=\widehat{\ml{G}}(t,|\xi|).
\end{align*}
 By the mean value theorem
\begin{align*}
|G(t,x-y)-G(t,x)|\lesssim |y|\,|\partial_xG(t,x-\eta_1y)| \ \ \mbox{with}\ \ \eta_1\in(0,1),
\end{align*}
and Lemma \ref{Lemma-optimal} with $k=1,2$ for small frequencies, the next estimate for large frequencies:
\begin{align*}
\|\,|\xi|^k\widehat{\ml{G}}(t,|\xi|)\|_{L^2(|\xi|\geqslant 1)}^2&\lesssim\int_1^{+\infty}|\xi|^{2(k-2)}\,\mathrm{e}^{-|\xi|^2t}\,\mathrm{d}|\xi|\\
&\lesssim\mathrm{e}^{-ct}\int_1^{+\infty}|\xi|^{2(k-2)}\,\mathrm{d}|\xi|\lesssim \mathrm{e}^{-ct}\ \ \mbox{with}\ \ k=0,1,
\end{align*}
one may separate the integral into two parts such that
\begin{align*}
&\|\ml{G}(t,|D|)w_1(\cdot)-G(t,\cdot)P_{w_1}\|_{L^2}\\
&\lesssim\left\|\int_{|y|\leqslant t^{1/8}}\big(G(t,\cdot-y)-G(t,\cdot)\big)w_1(y)\,\mathrm{d}y\,\right\|_{L^2}+\left\|\int_{|y|\geqslant t^{1/8}}\big(|G(t,\cdot-y)|+|G(t,\cdot)|\big)|w_1(y)|\,\mathrm{d}y\,\right\|_{L^2}\\
&\lesssim t^{\frac{1}{8}}\|\,|\xi|\widehat{\ml{G}}(t,|\xi|)\|_{L^2}\|w_1\|_{L^1}+\|\widehat{\ml{G}}(t,|\xi|)\|_{L^2}\|w_1\|_{L^1(|x|\geqslant t^{1/8})}\\
&\lesssim t^{\frac{1}{8}}(t^{\frac{1}{4}}+\mathrm{e}^{-ct})\|w_1\|_{L^1}+(t^{\frac{3}{4}}+\mathrm{e}^{-ct})\,o(1)
\end{align*}
for large time $t\gg1$, thanks to our assumption on the $L^1$ integrability of $w_1$ leading to
\begin{align*}
	\lim\limits_{t\to+\infty}\int_{|x|\geqslant t^{1/8}}|w_1(x)|\,\mathrm{d}x=0.
\end{align*}
In other words, we conclude
\begin{align*}
\|w(t,\cdot)-G(t,\cdot)P_{w_1}\|_{L^2}&\leqslant \|w(t,\cdot)-\ml{G}(t,|D|)w_1(\cdot)\|_{L^2}+\|\ml{G}(t,|D|)w_1(\cdot)-G(t,\cdot)P_{w_1}\|_{L^2}\\
&\lesssim t^{\frac{1}{4}}\|(w_0,w_1),(\psi_0,\psi_1)\|_{(\ml{D}_0^0\times\ml{D}_0^{-1})\times(\ml{D}_0^{-1}\times\ml{D}_0^{-2})}+o(t^{\frac{3}{4}})\\
&=o(t^{\frac{3}{4}}),
\end{align*}
and, similarly from Propositions \ref{Prop-psi-small} and \ref{Prop-psi-large},
\begin{align}\label{Error-psi-weak}
\|\psi(t,\cdot)-\partial_xG(t,\cdot)P_{w_1}\|_{L^2}=o(t^{\frac{1}{4}})
\end{align}
for large time $t\gg1$. In the above error estimate of $\psi$, we employed
\begin{align*}
\left\|\big(1-\chi_{\intt}(D)\big)\partial_x\ml{G}(t,|D|)w_1(\cdot)\right\|_{L^2}&\lesssim\left\|\big(1-\chi_{\intt}(\xi)\big)|\xi|^{-1}\,\mathrm{e}^{-c|\xi|^2t}\,\widehat{w}_1(\xi) \right\|_{L^2}\\
&\lesssim\mathrm{e}^{-ct}\, t^{-\frac{1}{2}}\sup\limits_{|\xi|\geqslant \varepsilon_0}\left((|\xi|^2t)^{\frac{1}{2}}\,\mathrm{e}^{-c|\xi|^2t}\right)\|\langle\xi\rangle^{-2}\widehat{w}_1(\xi)\|_{L^2}\\
&\lesssim \mathrm{e}^{-ct}\|w_1\|_{H^{-2}}.
\end{align*}
Eventually, let us employ the Minkowski inequality and combine the last estimates with Lemma \ref{Lemma-optimal} to arrive at
\begin{align*}
\|w(t,\cdot)\|_{L^2}&\gtrsim\|G(t,\cdot)\|_{L^2}|P_{w_1}|-\|w(t,\cdot)-G(t,\cdot)P_{w_1}\|_{L^2}\gtrsim t^{\frac{3}{4}}|P_{w_1}|-o(t^{\frac{3}{4}}),\\
\|\psi(t,\cdot)\|_{L^2}&\gtrsim\|\partial_xG(t,\cdot)\|_{L^2}|P_{w_1}|-\|\psi(t,\cdot)-\partial_xG(t,\cdot)P_{w_1}\|_{L^2}\gtrsim t^{\frac{1}{4}}|P_{w_1}|-o(t^{\frac{1}{4}}),
\end{align*}
for large time $t\gg1$. Thanks to our non-trivial assumption $P_{w_1}\neq 0$, they immediately provide optimal lower bound estimates.\medskip 

\noindent\textbf{Large Time Profile of $\psi$}: Due to the additional hypothesis $w_1\in L^{1,1}$ and \cite[Lemma 3.1]{Ikehata=2004} (see also \cite[Lemma 2.2]{Ikehata=2014} and \cite[Lemma 5.1 with $\gamma=1$]{Ikehata-Michihisa=2019}) we may deduce
\begin{align}\label{Tools-xi}
|\widehat{w}_1-P_{w_1}|\lesssim|\xi|\,\|w_1\|_{L^{1,1}},
\end{align}
so that for large time $t\gg1$ \eqref{Error-psi-weak} can be improved by
\begin{align*}
&\|\psi(t,\cdot)-\partial_xG(t,\cdot)P_{w_1}\|_{L^2}\\
&\lesssim\|\psi(t,\cdot)-\ml{G}(t,|D|)\mathrm{d}_xw_1(\cdot)\|_{L^2}+\|\ml{G}(t,|D|)\mathrm{d}_xw_1(\cdot)-\partial_xG(t,\cdot)P_{w_1}\|_{L^2}\\
&\lesssim \left\|\chi_{\intt}(\xi)\,\mathrm{e}^{-c|\xi|^2t}\right\|_{L^2}\|(w_0,w_1),(\psi_0,\psi_1)\|_{(L^1\times L^1)\times(L^1\times L^1)}+\left\|\big(1-\chi_{\intt}(\xi)\big)\widehat{\psi}(t,\xi)\right\|_{L^2}\\
&\quad+\left\|\big(1-\chi_{\intt}(\xi)\big)\xi\widehat{\ml{G}}(t,|\xi|)\widehat{w}_1(\xi)\right\|_{L^2}+\left\||\xi|^2\widehat{\ml{G}}(t,|\xi|)\right\|_{L^2}\|w_1\|_{L^{1,1}}\\
&\lesssim \begin{cases}
	t^{-\frac{1}{4}}\|(w_0,w_1),(\psi_0,\psi_1)\|_{(\ml{D}_0^{-1}\times\ml{D}_1^{-2})\times(\ml{D}_0^0\times\ml{D}_0^{-1})}&\mbox{if}\ \ a=1,\\
	\max\{t^{-\frac{1}{4}},t^{-\frac{\ell}{2}}\}\|(w_0,w_1),(\psi_0,\psi_1)\|_{(\ml{D}_0^{\ell-1}\times\ml{D}_1^{\ell-2})\times(\ml{D}_0^{\ell}\times\ml{D}_0^{\ell-1})}&\mbox{if}\ \ a\neq1.
\end{cases}
\end{align*}
In the above, we used the following sharp regularity-loss type decay estimate to treat $\widehat{\psi}$ for large frequencies:
\begin{align}\label{Tools-Large-Freq}
	\left\|\chi_{\extt}(\xi)\,\mathrm{e}^{-c|\xi|^{-2}t}\,\widehat{g}(\xi)\right\|_{L^2}&\lesssim\left\|\chi_{\extt}(\xi)|\xi|^{-\ell}\,\mathrm{e}^{-c|\xi|^{-2}t}\right\|_{L^{\infty}}\|\langle\xi\rangle^{\ell}\widehat{g}(\xi)\|_{L^2}\notag\\
	&\lesssim t^{-\frac{\ell}{2}}\|g\|_{H^{\ell}}
\end{align}
for a given function (taken as the initial data) $g=g(x)\in H^{\ell}$ with any $\ell>0$. Then, one may claim the desired refined estimate for $\psi$.\medskip

\noindent\textbf{Large Time Profile of $w$}: Let us concentrate on the further error estimate by the next suitable decomposition:
\begin{align*}
w(t,x)-\big(G(t,x)P_{w_1}-\partial_x G(t,x) Q_{w_1}-\partial_xG(t,x)P_{\psi_0+\psi_1}\big)=\sum\limits_{k=1,2,3}\ml{E}_k(t,x)
\end{align*}
whose components on the right-hand side are denoted by
\begin{align*}
\ml{E}_1(t,x)&:=w(t,x)-\ml{G}(t,|D|)w_1(x)+\ml{G}(t,|D|)\,\mathrm{d}_x\big(\psi_0(x)+\psi_1(x)\big),\\
\ml{E}_2(t,x)&:=\ml{G}(t,|D|)w_1(x)-G(t,x)P_{w_1}+\partial_xG(t,x) Q_{w_1},\\
\ml{E}_3(t,x)&:=-\ml{G}(t,|D|)\,\mathrm{d}_x\big(\psi_0(x)+\psi_1(x)\big)+\partial_xG(t,x)P_{\psi_0+\psi_1}.
\end{align*}
According to Propositions \ref{Prop-w-small} as well as \ref{Prop-w-large}, one employs \eqref{Tools-Large-Freq} and Lemma \ref{Lemma-optimal} to derive
\begin{align*}
	\|\ml{E}_1(t,\cdot)\|_{L^2}&=\left\|\widehat{w}(t,\xi)-\widehat{\ml{G}}(t,|\xi|)\widehat{w}_1(\xi)+i\xi\widehat{\ml{G}}(t,|\xi|)\big(\widehat{\psi}_0(\xi)+\widehat{\psi}_1(\xi)\big)\right\|_{L^2}\\
	&\lesssim \left\|\chi_{\intt}(\xi)\,\mathrm{e}^{-c|\xi|^2t}\right\|_{L^2}\|(w_0,w_1),(\psi_0,\psi_1)\|_{(L^1\times L^1)\times (L^1\times L^1)}+\left\|\big(1-\chi_{\intt}(\xi)\big)\widehat{w}(t,\xi)\right\|_{L^2}\\
	&\quad+\left\|\big(1-\chi_{\intt}(\xi)\big)\widehat{\ml{G}}(t,|\xi|)\widehat{w}_1(\xi)\right\|_{L^2}+\left\|\big(1-\chi_{\intt}(\xi)\big)\xi\widehat{\ml{G}}(t,|\xi|)\big(\widehat{\psi}_0(\xi)+\widehat{\psi}_1(\xi)\big)\right\|_{L^2}\\
	&\lesssim \begin{cases}
		t^{-\frac{1}{4}}\|(w_0,w_1),(\psi_0,\psi_1)\|_{(\ml{D}_0^0\times\ml{D}_0^{-1})\times(\ml{D}_0^{-1}\times\ml{D}_0^{-2})}&\mbox{if}\ \ a=1,\\
		\max\{t^{-\frac{1}{4}},t^{-\frac{\ell}{2}}\}\|(w_0,w_1),(\psi_0,\psi_1)\|_{(\ml{D}_0^{\ell}\times\ml{D}_0^{\ell-1})\times(\ml{D}_0^{\ell-1}\times\ml{D}_0^{\ell-2})}&\mbox{if}\ \ a\neq1,
	\end{cases}
\end{align*}
for large time $t\gg1$. 
Moreover, thanks to our assumption $\psi_0+\psi_1\in L^{1,1}$ in the improved estimate \eqref{Tools-xi}, as $t\gg1$ the third error term is estimated by
\begin{align*}
\|\ml{E}_3(t,\cdot)\|_{L^2}\lesssim\left\||\xi|^2\widehat{\ml{G}}(t,|\xi|)\right\|_{L^2}\|\psi_0+\psi_1\|_{L^{1,1}}\lesssim t^{-\frac{1}{4}}\|(\psi_0,\psi_1)\|_{L^{1,1}\times L^{1,1}}.
\end{align*}
In order to treat the final error term $\ml{E}_2(t,x)$, we recall \cite[Lemma 5.1 with $\gamma=2$]{Ikehata-Michihisa=2019} to address
\begin{align*}
|\widehat{w}_1-P_{w_1}+i\xi Q_{w_1}|&\leqslant\left|\widehat{w}_1-P_{w_1}+i\xi Q_{w_1}+\frac{|\xi|^2}{2}\int_{\mb{R}}|x|^2w_1(x)\,\mathrm{d}x\right|+\frac{|\xi|^2}{2}\int_{\mb{R}}|x|^2|w_1(x)|\,\mathrm{d}x\\
&\lesssim|\xi|^2\int_{\mb{R}}|x|^2|w_1(x)|\,\mathrm{d}x\lesssim |\xi|^2\|w_1\|_{L^{1,2}}.
\end{align*}
One then deduces
\begin{align*}
\|\ml{E}_2(t,\cdot)\|_{L^2}&=\left\|\widehat{\ml{G}}(t,|\xi|)\widehat{w}_1(\xi)-\widehat{\ml{G}}(t,|\xi|)P_{w_1}+i\xi\widehat{\ml{G}}(t,|\xi|)Q_{w_1}\right\|_{L^2}\\
&\lesssim\left\||\xi|^2\widehat{\ml{G}}(t,|\xi|)\right\|_{L^2}\|w_1\|_{L^{1,2}}\lesssim t^{-\frac{1}{4}}\|w_1\|_{L^{1,2}}.
\end{align*}
Summarizing all estimates in the above we conclude our desired estimate \eqref{Error-w}.

\section{Global in time existence for the semilinear Cauchy problem}\label{Section-GESDS}\setcounter{equation}{0}
\subsection{Philosophy of our proof}
\hspace{5mm}For any $T_*>0$ we introduce the evolution space $\ml{X}_{T_*}$ of solution $\ml{U}=\ml{U}(t,x)$ in the vector sense that $\ml{U}:=(w^N,\psi^N)^{\mathrm{T}}$ by
\begin{align*}
\ml{X}_{T_*}:=
\big(\ml{C}([0,{T_*}],H^2)\times \ml{C}([0,{T_*}],H^{1})\big)^{\mathrm{T}}
\end{align*}
equipped the time-weighted norm
\begin{align*}
\|\ml{U}\|_{\ml{X}_{T_*}}:=\sup\limits_{t\in[0,T_*]}\left(\sum\limits_{k=0,2}(1+t)^{\frac{-1+2k}{4}}\|w^N(t,\cdot)\|_{\dot{H}^k}+\sum\limits_{k=0,1}(1+t)^{\frac{1+2k}{4}}\|\psi^N(t,\cdot)\|_{\dot{H}^k}\right).
\end{align*}
Its time-dependent weighted functions are strongly motivated by some estimates of solutions to the corresponding linearized Cauchy problem \eqref{Dissipative-Timoshenko} with the equal wave speeds $a=1$ as well as $P_{w_1}=0$, whose details will be showed in the next subsection.

Let us define the linear part $u_{\lin}=u_{\lin}(t,x)$ with $u=w,\psi$ by
\begin{align*}
	u_{\lin}(t,x):=K_0^{u}(t,\partial_x)w_0(x)+K_1^u(t,\partial_x)w_1(x)+K_2^u(t,\partial_x)\psi_0(x)+K_3^u(t,\partial_x)\psi_1(x),
\end{align*}
where the kernels $K_j^u(t,\partial_x)$ can be defined in Section \ref{Section_Fourier_Space} by the Fourier transform. Then, the unknown vector $\ml{U}_{\lin}=\ml{U}_{\lin}(t,x)$ to the corresponding linearized dissipative Timoshenko system to \eqref{Nonlinear-Dissipative-Timoshenko} with vanishing right-hand sides is expressed by
\begin{align*}
\ml{U}_{\lin}(t,x)&:=\left(
{\begin{array}{*{20}c}
	w_{\lin}(t,x)\\
	\psi_{\lin}(t,x)
\end{array}}
\right)=
\left(
{\begin{array}{*{20}c}
	K_0^w(t,\partial_x) & K_2^w(t,\partial_x)\\
	K_0^{\psi}(t,\partial_x) & K_2^{\psi}(t,\partial_x)
\end{array}}
\right)
\ml{U}_{\lin}(0,x)+\left(
{\begin{array}{*{20}c}
	K_1^w(t,\partial_x) & K_3^w(t,\partial_x)\\
	K_1^{\psi}(t,\partial_x) & K_3^{\psi}(t,\partial_x)
\end{array}}
\right)
\partial_t\ml{U}_{\lin}(0,x)
\end{align*}
with the initial conditions
\begin{align*}
\ml{U}_{\lin}(0,x):=\left(
{\begin{array}{*{20}c}
	w_0^N(x)\\
	\psi_0^N(x)
\end{array}}
\right)
\ \ \mbox{as well as}\ \ \partial_t\ml{U}_{\lin}(0,x):=\left(
{\begin{array}{*{20}c}
	w_1^N(x)\\
	\psi_1^N(x)
\end{array}}
\right).
\end{align*}

From Duhamel’s principle, concerning the semilinear Cauchy problem \eqref{Nonlinear-Dissipative-Timoshenko} in the next vector version of second order evolution system (i.e. semilinear wave system with lower order terms):
\begin{align*}
\partial_t^2\ml{U}-
\left(
{\begin{array}{*{20}c}
		1 & 0  \\
		0  & 1 \\
\end{array}}
\right)
\partial_x^2\ml{U}-\left(
{\begin{array}{*{20}c}
		0 & -1  \\
		1  & 0 \\
\end{array}}
\right)\partial_x\ml{U}+\left(
{\begin{array}{*{20}c}
		0 & 0  \\
		0  & 1 \\
\end{array}}
\right)\partial_t\ml{U}+\left(
{\begin{array}{*{20}c}
		0 & 0  \\
		0  & 1 \\
\end{array}}
\right)\ml{U}=\left(
{\begin{array}{*{20}c}
0\\
|\psi^N|^p
\end{array}}
\right),
\end{align*}
 let us construct the following nonlinear integral operator:
\begin{align*}
\ml{N}:\ \ml{U}\in \ml{X}_{T_*}\to \ml{N}[\ml{U}]:=\ml{U}_{\lin}+\ml{U}_{\nlin},
\end{align*}
with $\ml{U}_{\nlin}=\ml{U}_{\nlin}(t,x)$ defined by
\begin{align*}
\ml{U}_{\nlin}(t,x)=\left(
{\begin{array}{*{20}c}
		w_{\nlin}(t,x)\\
		\psi_{\nlin}(t,x)
\end{array}}
\right)&:=\int_0^t\left(
{\begin{array}{*{20}c}
	K_1^w(t-\eta,\partial_x) & K_3^w(t-\eta,\partial_x)\\
	K_1^{\psi}(t-\eta,\partial_x) & K_3^{\psi}(t-\eta,\partial_x)
\end{array}}
\right)
\left(
{\begin{array}{*{20}c}
0\\
|\psi^N(\eta,x)|^p
\end{array}}
\right)
\mathrm{d}\eta\\
&\ =\int_0^t\left(
{\begin{array}{*{20}c}
	K_3^w(t-\eta,\partial_x)|\psi^N(\eta,x)|^p\\
	K_3^{\psi}(t-\eta,\partial_x)|\psi^N(\eta,x)|^p	
\end{array}}
\right)\mathrm{d}\eta.
\end{align*}

To achieve our goal, i.e. $\ml{U}$ is a fixed point of the operator $\ml{N}$ in $\ml{X}_{+\infty}$ by applying the Banach contraction principle, we have to show that the following fundamental inequalities:
\begin{align}\label{Crucial-01}
\|\ml{N}[\ml{U}]\|_{\ml{X}_{T_*}}&\lesssim\|(w_0^N,w_1^N),(\psi_0^N,\psi_1^N)\|_{(\ml{D}_0^2\times\ml{D}_1^{1})\times(\ml{D}_0^{1}\times\ml{D}_0^{0})}+\|\ml{U}\|_{\ml{X}_{T_*}}^p,\\
\|\ml{N}[\ml{U}]-\ml{N}[\widetilde{\ml{U}}]\|_{\ml{X}_{T_*}}&\lesssim\|\ml{U}-\widetilde{\ml{U}}\|_{\ml{X}_{T_*}}\left(\|\ml{U}\|_{\ml{X}_{T_*}}^{p-1}+\|\widetilde{\ml{U}}\|_{\ml{X}_{T_*}}^{p-1} \right),\label{Crucial-02}
\end{align}
hold for any $\ml{U},\widetilde{\ml{U}}\in \ml{X}_{T_*}$, where all unexpressed multiplicative constants are independent of $T_*$. Later, the estimates \eqref{Linear-01}, \eqref{Linear-02} will show $\ml{U}_{\lin}\in \ml{X}_{T_*}$ for any $T_*>0$ and the uniformly bounded estimate
\begin{align}\label{Crucial-03}
\|\ml{U}_{\lin}\|_{\ml{X}_{T_*}}\lesssim \|(w_0^N,w_1^N),(\psi_0^N,\psi_1^N)\|_{(\ml{D}_0^2\times\ml{D}_1^{1})\times(\ml{D}_0^{1}\times\ml{D}_0^{0})}.
\end{align}
Remark that the combination of \eqref{Crucial-03} and \eqref{Crucial-02} with $\widetilde{\ml{U}}\equiv0$ follows \eqref{Crucial-01}. In other words, we need to justify \eqref{Crucial-02} under the condition $p>p_{\mathrm{Fuj}}(1)=3$ in the forthcoming part.

\subsection{Preliminary on the linearized Cauchy problem}\label{Sub-new-linear}
\hspace{5mm}As our preparation, we in this subsection derive some sharp estimates for the corresponding linearized Cauchy problem \eqref{Dissipative-Timoshenko} with the equal wave speeds (i.e. $a=1$) when $P_{w_1}=0$. From \eqref{Tools-xi} with $P_{w_1}=0$ it holds that
\begin{align*}
\chi_{\intt}(\xi)\widehat{\ml{G}}(t,|\xi|)|\widehat{w}_1|\lesssim \chi_{\intt}(\xi)|\xi|\widehat{\ml{G}}(t,|\xi|)\|w_1\|_{L^{1,1}}
\end{align*}
for $k=1,2$. Therefore, recalling the derived pointwise estimates \eqref{Error-w02} as well as \eqref{Error-psi02}, one knows
\begin{align*}
\chi_{\intt}(\xi)|\widehat{w}|&\lesssim\chi_{\intt}(\xi)\,\mathrm{e}^{-c|\xi|^2t}|\widehat{w}_0|+\chi_{\intt}(\xi)\left(1+\frac{|\sin(c_a|\xi|^2t)|}{c_a|\xi|}\right)\mathrm{e}^{-c|\xi|^2t}\left(\|w_1\|_{L^{1,1}}+|\widehat{\psi}_0|+|\widehat{\psi}_1|\right),\\
\chi_{\intt}(\xi)|\widehat{\psi}|&\lesssim\chi_{\intt}(\xi)\,\mathrm{e}^{-c|\xi|^2t}\left(|\widehat{w}_0|+\|w_1\|_{L^{1,1}}+|\widehat{\psi}_0|+|\widehat{\psi}_1|\right).
\end{align*}
Let us apply the last two estimates associated with Propositions \ref{Prop-w-large} and \ref{Prop-psi-large}. They yield
\begin{align*}
\|w(t,\cdot)\|_{L^2}&\lesssim (1+t)^{-\frac{1}{4}}\|w_0\|_{L^2\cap L^1}+(1+t)^{\frac{1}{4}}\|(w_1,\psi_0,\psi_1)\|_{(H^{-1}\cap L^{1,1})\times (H^{-1}\cap L^1)\times (H^{-2}\cap L^1)},\\
\|w(t,\cdot)\|_{\dot{H}^2}&\lesssim (1+t)^{-\frac{5}{4}}\|w_0\|_{H^2\cap L^1}+(1+t)^{-\frac{3}{4}}\|(w_1,\psi_0,\psi_1)\|_{(H^{1}\cap L^{1,1})\times (H^{1}\cap L^1)\times (L^2\cap L^1)},
\end{align*}
where we used Lemma \ref{Lemma-optimal}. Moreover, by choosing $k=0,1$, we arrive at
\begin{align*}
\|\psi(t,\cdot)\|_{\dot{H}^k}&\lesssim \left\|\chi_{\intt}(\xi)|\xi|^k\,\mathrm{e}^{-c|\xi|^2t}\right\|_{L^2}\|(w_0,w_1),(\psi_0,\psi_1)\|_{(L^1\times L^{1,1})\times (L^1\times L^1)}\\
&\quad+
\mathrm{e}^{-ct}\|(w_0,w_1),(\psi_0,\psi_1)\|_{(H^{k-1}\times H^{k-2})\times (H^k\times H^{k-1})}\\
&\lesssim (1+t)^{-\frac{1}{4}-\frac{k}{2}}
	\|(w_0,w_1),(\psi_0,\psi_1)\|_{(H^{k-1}\cap L^1)\times (H^{k-2}\cap L^{1,1})\times (H^k\cap L^1)\times (H^{k-1}\cap L^1)},
\end{align*}
where we used
\begin{align*}
\left\|\chi_{\intt}(\xi)|\xi|^k\,\mathrm{e}^{-c|\xi|^2t}\right\|_{L^2}^2=\int_0^{\varepsilon_0}|\xi|^{2k}\,\mathrm{e}^{-2c|\xi|^2t}\,\mathrm{d}|\xi|\lesssim (1+t)^{-\frac{1}{2}-k}.
\end{align*}
In conclusion, the solutions to the linearized dissipative Timoshenko system \eqref{Dissipative-Timoshenko} with $a=1$ and $P_{w_1}=0$ satisfy
\begin{align}
\|w(t,\cdot)\|_{\dot{H}^k}&\lesssim (1+t)^{\frac{1}{4}-\frac{k}{2}}\|(w_0,w_1),(\psi_0,\psi_1)\|_{(\ml{D}_0^2\times\ml{D}_1^{1})\times(\ml{D}_0^{1}\times\ml{D}_0^{0})}\ \ \ \ \,\, \mbox{for}\ \ k=0,2,\label{Linear-01}\\
\|\psi(t,\cdot)\|_{\dot{H}^k}&\lesssim(1+t)^{-\frac{1}{4}-\frac{k}{2}}\|(w_0,w_1),(\psi_0,\psi_1)\|_{(\ml{D}_0^0\times\ml{D}_1^{-1})\times(\ml{D}_0^{1}\times\ml{D}_0^{0})}\ \ \mbox{for}\ \ k=0,1.\label{Linear-02}
\end{align}
The additional condition $P_{w_1}=0$ may improve the time-dependent coefficients of the estimates \eqref{Est-02} and \eqref{Est-03} by $t^{-1/2}$ as large time $t\gg1$, which is caused by the gained factor $|\xi|$ for small frequencies from \eqref{Tools-xi}.
Note that since $\ml{D}_0^0=L^2\cap L^1$ they also imply the $(L^2\cap L^1)-\dot{H}^k$ estimates
\begin{align}
\|K_3^{w}(t-\eta,\partial_x)g\|_{\dot{H}^k}&\lesssim (1+t-\eta)^{\frac{1}{4}-\frac{k}{2}}\|g\|_{L^2\cap L^1}\ \ \, \ \mbox{for}\ \ k=0,2,\label{Help-03}\\
\|K_3^{\psi}(t-\eta,\partial_x)g\|_{\dot{H}^k}&\lesssim (1+t-\eta)^{-\frac{1}{4}-\frac{k}{2}}\|g\|_{L^2\cap L^1}\ \ \mbox{for}\ \ k=0,1.\label{Help-01}
\end{align}

Then, we also can obtain the $L^2-\dot{H}^k$ estimates
\begin{align}
\|K_3^{w}(t-\eta,\partial_x)g\|_{\dot{H}^2}&\lesssim (1+t-\eta)^{-\frac{1}{2}}\|g\|_{L^2},\label{Help-04}\\
	\|K_3^{\psi}(t-\eta,\partial_x)g\|_{\dot{H}^k}&\lesssim (1+t-\eta)^{-\frac{k}{2}}\|g\|_{L^2}\ \ \mbox{for}\ \ k=0,1,\label{Help-02}
\end{align}
by using $\|\chi_{\intt}(\xi)|\xi|^k\,\mathrm{e}^{-c|\xi|^2t} \|_{L^{\infty}}\lesssim (1+t)^{-\frac{k}{2}}$.

\subsection{Proof of Theorem \ref{Thm-GESDS}}
\hspace{5mm}
 We first rewrite the difference of nonlinear terms by
\begin{align*}
\left||\psi^N(\eta,x)|^p-|\widetilde{\psi}^N(\eta,x)|^p\right|&=\left|\int_0^1\frac{\partial}{\partial\varrho}\left|\varrho\,\psi^N(\eta,x)+(1-\varrho)\widetilde{\psi}^N(\eta,x)\right|^p\mathrm{d}\varrho\,\right|\\
&\lesssim\left(|\psi^N(\eta,x)|^{p-1}+|\widetilde{\psi}^N(\eta,x)|^{p-1} \right) |\psi^N(\eta,x)-\widetilde{\psi}^N(\eta,x)|.
\end{align*}
Applying H\"older's inequality one arrives at
\begin{align*}
\left\||\psi^N(\eta,\cdot)|^p-|\widetilde{\psi}^N(\eta,\cdot)|^p\right\|_{L^r}\lesssim\|\psi^{N}(\eta,\cdot)-\widetilde{\psi}^N(\eta,\cdot)\|_{L^{rp}}\left(\|\psi^N(\eta,\cdot)\|_{L^{rp}}^{p-1}+\|\widetilde{\psi}^N(\eta,\cdot)\|_{L^{rp}}^{p-1}\right)
\end{align*}
for any $r\geqslant 1$. Thanks to the norm of $\ml{X}_{T_*}$, an application of the well-known Gagliardo-Nirenberg inequality shows
\begin{align*}
\|\psi^N(\eta,\cdot)\|_{L^{rp}}&\lesssim \|\psi^N(\eta,\cdot)\|_{L^2}^{\frac{1}{2}+\frac{1}{rp}}\|\psi^N(\eta,\cdot)\|_{\dot{H}^{1}}^{\frac{1}{2}-\frac{1}{rp}}\lesssim (1+\eta)^{-\frac{1}{2}+\frac{1}{2rp}}\|\ml{U}\|_{\ml{x}_\eta},
\end{align*}
where we took $p\geqslant 2/r$ from the restriction of the Gagliardo-Nirenberg inequality. Similarly, with any $p\geqslant 2/r$ and $r\geqslant 1$, for $\eta\leqslant T_*$ we can conclude
\begin{align}
\left\||\psi^N(\eta,\cdot)|^p-|\widetilde{\psi}^N(\eta,\cdot)|^p\right\|_{L^r}\lesssim(1+\eta)^{-\frac{p}{2}+\frac{1}{2r}}\|\ml{U}-\widetilde{\ml{U}}\|_{\ml{X}_{T_*}}\left(\|\ml{U}\|_{\ml{X}_{T_*}}^{p-1}+\|\widetilde{\ml{U}}\|_{\ml{X}_{T_*}}^{p-1} \right).\label{A-priori}
\end{align}

Taking $k=0,1$, let us employ the $(L^2\cap L^1)-\dot{H}^k$ estimate \eqref{Help-01} in $[0,t/2]$ and the $L^2-\dot{H}^k$ estimate \eqref{Help-02} in $[t/2,t]$ to derive
\begin{align*}
\|\psi_{\nlin}(t,\cdot)-\widetilde{\psi}_{\nlin}(t,\cdot)\|_{\dot{H}^k}&=\left\|\int_0^tK_3^{\psi}(t-\eta,\partial_x)\big(|\psi^N(\eta,\cdot)|^p-|\widetilde{\psi}^N(\eta,\cdot)|^p\big)\,\mathrm{d}\eta\,\right\|_{\dot{H}^k}\\
&\lesssim \int_0^{t/2}(1+t-\eta)^{-\frac{1}{4}-\frac{k}{2}}\left\||\psi^N(\eta,\cdot)|^p-|\widetilde{\psi}^N(\eta,\cdot)|^p\right\|_{L^2\cap L^1}\mathrm{d}\eta\\
&\quad+\int_{t/2}^t(1+t-\eta)^{-\frac{k}{2}}\left\||\psi^N(\eta,\cdot)|^p-|\widetilde{\psi}^N(\eta,\cdot)|^p\right\|_{L^2}\mathrm{d}\eta.
\end{align*}
According to the a prior estimate \eqref{A-priori} with $r=1$ and $r=2$ one arrives at
\begin{align}
&(1+t)^{\frac{1}{4}+\frac{k}{2}}\|\psi_{\nlin}(t,\cdot)-\widetilde{\psi}_{\nlin}(t,\cdot)\|_{\dot{H}^k}\notag\\
&\lesssim(1+t)^{\frac{1}{4}+\frac{k}{2}}\int_0^{t/2}(1+t-\eta)^{-\frac{1}{4}-\frac{k}{2}}(1+\eta)^{-\frac{p}{2}+\frac{1}{2}}\,\mathrm{d}\eta\,\|\ml{U}-\widetilde{\ml{U}}\|_{\ml{X}_{T_*}}\left(\|\ml{U}\|_{\ml{X}_{T_*}}^{p-1}+\|\widetilde{\ml{U}}\|_{\ml{X}_{T_*}}^{p-1} \right)\notag\\
&\quad+(1+t)^{\frac{1}{4}+\frac{k}{2}}\int_{t/2}^t(1+t-\eta)^{-\frac{k}{2}}(1+\eta)^{-\frac{p}{2}+\frac{1}{4}}\,\mathrm{d}\eta\,\|\ml{U}-\widetilde{\ml{U}}\|_{\ml{X}_{T_*}}\left(\|\ml{U}\|_{\ml{X}_{T_*}}^{p-1}+\|\widetilde{\ml{U}}\|_{\ml{X}_{T_*}}^{p-1} \right)\notag\\
&\lesssim \left(\int_0^{t/2}(1+\eta)^{-\frac{p}{2}+\frac{1}{2}}\,\mathrm{d}\eta\, +(1+t)^{\frac{3}{2}-\frac{p}{2}}\right)\|\ml{U}-\widetilde{\ml{U}}\|_{\ml{X}_{T_*}}\left(\|\ml{U}\|_{\ml{X}_{T_*}}^{p-1}+\|\widetilde{\ml{U}}\|_{\ml{X}_{T_*}}^{p-1} \right)\label{Est-new-1}
\end{align}
with $p\geqslant 2$, where we used $1+t-\eta\approx 1+t$ for $\eta\in[0,t/2]$ and $1+\eta\approx 1+t$ for $\eta\in[t/2,t]$.  For the integral, since the condition $p>3$ one may obtain the uniform integrability over $[0,t/2]$. As a consequence,
\begin{align*}
(1+t)^{\frac{1}{4}+\frac{k}{2}}\|\psi_{\nlin}(t,\cdot)-\widetilde{\psi}_{\nlin}(t,\cdot)\|_{\dot{H}^k}\lesssim \|\ml{U}-\widetilde{\ml{U}}\|_{\ml{X}_{T_*}}\left(\|\ml{U}\|_{\ml{X}_{T_*}}^{p-1}+\|\widetilde{\ml{U}}\|_{\ml{X}_{T_*}}^{p-1} \right)
\end{align*}
with $k=0,1$, which implies the desired estimate \eqref{Crucial-02} holding for the second element of $\ml{U}_{\nlin}$ and, in turn, $\ml{U}$ immediately.

For another, by employing the derived $(L^2\cap L^1)-L^2$ estimate \eqref{Help-03} in $[0,t]$ one notices
\begin{align*}
&(1+t)^{-\frac{1}{4}}\|w_{\nlin}(t,\cdot)-\widetilde{w}_{\nlin}(t,\cdot)\|_{L^2}\\
&\lesssim(1+t)^{-\frac{1}{4}}\int_0^t(1+t-\eta)^{\frac{1}{4}}\left\||\psi^N(\eta,\cdot)|^p-|\widetilde{\psi}^N(\eta,\cdot)|^p\right\|_{L^2\cap L^1}\mathrm{d}\eta\\
&\lesssim \left(\int_0^{t/2}(1+\eta)^{-\frac{p}{2}+\frac{1}{2}}\,\mathrm{d}\eta+(1+t)^{\frac{1}{4}-\frac{p}{2}}\int_{t/2}^t(1+t-\eta)^{\frac{1}{4}}\,\mathrm{d}\eta\right)\|\ml{U}-\widetilde{\ml{U}}\|_{\ml{X}_{T_*}}\left(\|\ml{U}\|_{\ml{X}_{T_*}}^{p-1}+\|\widetilde{\ml{U}}\|_{\ml{X}_{T_*}}^{p-1} \right)\\
&\lesssim \|\ml{U}-\widetilde{\ml{U}}\|_{\ml{X}_{T_*}}\left(\|\ml{U}\|_{\ml{X}_{T_*}}^{p-1}+\|\widetilde{\ml{U}}\|_{\ml{X}_{T_*}}^{p-1} \right)
\end{align*}
thanks to $p>3$. With the same approach of \eqref{Est-new-1}, i.e. the $(L^2\cap L^1)-\dot{H}^2$ estimate \eqref{Help-03} in $[0,t/2]$ and  the $L^2-\dot{H}^2$ estimate \eqref{Help-04} in $[t/2,t]$, it concludes
\begin{align*}
(1+t)^{\frac{3}{4}}\|w_{\nlin}(t,\cdot)-\widetilde{w}_{\nlin}(t,\cdot)\|_{\dot{H}^2}\lesssim \|\ml{U}-\widetilde{\ml{U}}\|_{\ml{X}_{T_*}}\left(\|\ml{U}\|_{\ml{X}_{T_*}}^{p-1}+\|\widetilde{\ml{U}}\|_{\ml{X}_{T_*}}^{p-1} \right).
\end{align*}

These estimates imply our desired estimates \eqref{Crucial-02}, and \eqref{Crucial-01} if we apply \eqref{Crucial-03}, immediately. Associated with the Banach fixed point argument our proof is finished.

\section*{Acknowledgments} 
 Wenhui Chen is supported in part by the National Natural Science Foundation of China (grant No. 12301270, grant No. 12171317), 2024 Basic and Applied Basic Research Topic--Young Doctor Set Sail Project (grant No. 2024A04J0016), Guangdong Basic and Applied Basic Research Foundation (grant No. 2023A1515012044).


\begin{thebibliography}{99}
\bibitem{Cao-Xu=2018}
\newblock H. Cao, J. Xu.
\newblock Global existence and large-time behavior of smooth solutions to the Timoshenko-Fourier system in thermoelasticity.
\newblock \emph{Nonlinear Anal. Real World Appl.} \textbf{39} (2018), 246--260.
\bibitem{D-R=2014}
\newblock M. D'Abbicco, M. Reissig.
\newblock Semilinear structural damped waves.
\newblock \emph{Math. Methods Appl. Sci.} \textbf{37} (2014), no. 11, 1570--1592.
\bibitem{Ebert-Reissig=2018}
\newblock M.R. Ebert, M. Reissig.
\newblock \emph{Methods for Partial Differential Equations}.
\newblock Birkh\"auser/Springer, Cham, 2018.
\bibitem{Fujita=1966}
\newblock H. Fujita. 
\newblock On the blowing up of solutions of the Cauchy problem for $u_t=\Delta u+u^{1+\alpha}$. 
\newblock \emph{J. Fac. Sci. Univ. Tokyo Sect. I} \textbf{13} (1966), 109--124.
\bibitem{Graff=1975}
\newblock K.F. Graff. 
\newblock \emph{Wave Motion in Elastic Solids}. \newblock Oxford, United Kingdom, Clarendon Press, 1975.
\bibitem{Guesmia-Messaoudi=2023}
\newblock A. Guesmia, S. Messaoudi.
\newblock Some $L^2(\mathbb{R})$-norm and $L^1(\mathbb{R})$-norm decay estimates for Cauchy Timoshenko type systems with a frictional damping or an infinite memory.
\newblock \emph{J. Math. Anal. Appl.} \textbf{527} (2023), no. 1, Paper No. 127385, 26 pp.

\bibitem{Ide-Haramoto-Kawashima=2008}
\newblock K. Ide, K. Haramoto, S. Kawashima.
\newblock Decay property of regularity-loss type for dissipative Timoshenko system.
\newblock \emph{Math. Models Methods Appl. Sci.} \textbf{18} (2008), no. 5, 647--667.
\bibitem{Ide-Kawashima=2008}
\newblock K. Ide, S. Kawashima.
\newblock Decay property of regularity-loss type and nonlinear effects for dissipative Timoshenko system.
\newblock \emph{Math. Models Methods Appl. Sci.} \textbf{18} (2008), no. 7, 1001--1025.
\bibitem{Ikehata=2004}
\newblock R. Ikehata.
\newblock New decay estimates for linear damped wave equations and its application to nonlinear problem.
\newblock \emph{Math. Methods Appl. Sci.} \textbf{27} (2004), no. 8, 865--889.
\bibitem{Ikehata=2014}
\newblock R. Ikehata.
\newblock Asymptotic profiles for wave equations with strong damping.
\newblock \emph{J. Differential Equations} \textbf{257} (2014), no. 6, 2159--2177.
\bibitem{Ikehata=2024}
\newblock R. Ikehata.
\newblock $L^2$-Blowup estimates of the plate equation.
\newblock \emph{Funkcial. Ekvac.} \textbf{67} (2024), no. 2, 175--198.
\bibitem{Ikehata-Inoue=2008}
\newblock R. Ikehata, Y. Inoue.
\newblock Global existence of weak solutions for two-dimensional semilinear wave equations with strong damping in an exterior domain.
\newblock \emph{Nonlinear Anal.} \textbf{68} (2008), no. 1, 154--169.
\bibitem{Ikehata-Michihisa=2019}
\newblock R. Ikehata, H. Michihisa.
\newblock Moment conditions and lower bounds in expanding solutions of wave equations with double damping terms.
\newblock \emph{Asymptot. Anal.} \textbf{114} (2019), no. 1-2, 19--36.
\bibitem{Ikeh-Tani-2005}
\newblock R. Ikehata, K. Tanizawa.
\newblock Global existence of solutions for semilinear damped wave equations in $\mathbf{R}^N$ with noncompactly supported initial data.
\newblock \emph{Nonlinear Anal.} \textbf{61} (2005), no. 7, 1189--1208.
\bibitem{Khader-Said=2018}
\newblock M. Khader, B. Said-Houari.
\newblock Optimal decay rate of solutions to Timoshenko system with past history in unbounded domains.
\newblock \emph{Z. Anal. Anwend.} \textbf{37} (2018), no. 4, 435--459.
\bibitem{Li-Zhou=1995}
\newblock T.T. Li, Y. Zhou.
\newblock Breakdown of solutions to $\square u+u_t=|u|^{1+\alpha}$.
\newblock \emph{Discrete Contin. Dynam. Systems} \textbf{1} (1995), no. 4, 503--520.
\bibitem{Liu-Kawashima=2012}
\newblock Y. Liu, S. Kawashima.
\newblock Decay property for the Timoshenko system with memory-type dissipation.
\newblock \emph{Math. Models Methods Appl. Sci.} \textbf{22} (2012), no. 2, 1150012, 19 pp.
\bibitem{Liu-Mao=2024}
\newblock Y. Liu, S. Mao.
\newblock Decay property of the Timoshenko-Fourier system with memory-type dissipation.
\newblock \emph{J. Differential Equations} \textbf{401} (2024), 469--491.
\bibitem{Mori=2018}
\newblock N. Mori.
\newblock Dissipative structure and global existence in critical space for Timoshenko system of memory type.
\newblock \emph{J. Differential Equations} \textbf{265} (2018), no. 4, 1627--1653.
\bibitem{Mori-Kawashima=2014}
\newblock N. Mori, S. Kawashima.
\newblock Decay property for the Timoshenko system with Fourier's type heat conduction.
\newblock \emph{J. Hyperbolic Differ. Equ.} \textbf{11} (2014), no. 1, 135--157.
\bibitem{Mori-Kawashima=2016}
\newblock N. Mori, S. Kawashima.
\newblock Decay property of the Timoshenko-Cattaneo system.
\newblock \emph{Anal. Appl.} \textbf{14} (2016), no. 3, 393--413.
\bibitem{Mori-Racke=2018}
\newblock N. Mori, R. Racke.
\newblock Global well-posedness and polynomial decay for a nonlinear Timoshenko-Cattaneo system under minimal Sobolev regularity.
\newblock \emph{Nonlinear Anal.} \textbf{173} (2018), 164--179.
\bibitem{Mori-Xu-Kawashima=2015}
\newblock N. Mori, J. Xu, S. Kawashima.
\newblock Global existence and optimal decay rates for the Timoshenko system: the case of equal wave speeds.
\newblock \emph{J. Differential Equations} \textbf{258} (2015), no. 5, 1494--1518.
\bibitem{Munoz-Racke=2008}
\newblock J.E. Mu\~noz Rivera, R. Racke.
\newblock Timoshenko systems with indefinite damping.
\newblock \emph{J. Math. Anal. Appl.} \textbf{341} (2008), no. 2, 1068--1083.
\bibitem{Racke-Said=2012}
\newblock R. Racke, B. Said-Houari.
\newblock Global existence and decay property of the Timoshenko system in thermoelasticity with second sound.
\newblock \emph{Nonlinear Anal.} \textbf{75} (2012), no. 13, 4957--4973.
\bibitem{Racke-Said=2013}
\newblock R. Racke,  B. Said-Houari.
\newblock Decay rates and global existence for semilinear dissipative Timoshenko systems.
\newblock \emph{Quart. Appl. Math.} \textbf{71} (2013), no. 2, 229--266.
\bibitem{Racke-Wang-Xue=2017}
\newblock R. Racke, W. Wang, R. Xue.
\newblock Optimal decay rates and global existence for a semilinear Timoshenko system with two damping effects.
\newblock \emph{Math. Methods Appl. Sci.} \textbf{40} (2017), no. 1, 210--222.
\bibitem{Said=2015}
\newblock B. Said-Houari.
\newblock Decay rates of the solution to the Cauchy problem of the type III Timoshenko model without any mechanical damping.
\newblock \emph{Demonstr. Math.} \textbf{48} (2015), no. 3, 379--390.
\bibitem{Said-Kasimov=2013}
\newblock B. Said-Houari,  A. Kasimov.
\newblock Damping by heat conduction in the Timoshenko system: Fourier and Cattaneo are the same.
\newblock \emph{J. Differential Equations} \textbf{255} (2013), no. 4, 611--632.
\bibitem{Soufyane-Said=2014}
\newblock A. Soufyane, B. Said-Houari.
\newblock The effect of the wave speeds and the frictional damping terms on the decay rate of the Bresse system.
\newblock \emph{Evol. Equ. Control Theory} \textbf{3} (2014), no. 4, 713--738.
\bibitem{Timoshenko-01}
\newblock S.P. Timoshenko.
\newblock On the correction for shear of the differential equation for transverse vibrations of prismatic bars.
\newblock \emph{Philos. Mag.} \textbf{41} (1921), 744--746.
\bibitem{Timoshenko-02}
\newblock S.P. Timoshenko.
\newblock On the transverse vibrations of bars of uniform cross-section.
\newblock \emph{Philos. Mag.} \textbf{43} (1922), 125--131.
\bibitem{Todorova-Yordanov=2001}
\newblock G. Todorova, B. Yordanov.
\newblock Critical exponent for a nonlinear wave equation with damping.
\newblock \emph{J. Differential Equations} \textbf{174} (2001), no. 2, 464--489.
\bibitem{Ueda-Duan-Kawashima=2012}
\newblock Y. Ueda, R. Duan, S. Kawashima.
\newblock Decay structure for symmetric hyperbolic systems with non-symmetric relaxation and its application.
\newblock \emph{Arch. Ration. Mech. Anal.} \textbf{205} (2012), no. 1, 239--266.
\bibitem{Wang-Xue=2019}
\newblock W. Wang, R. Xue.
\newblock Decay estimate and global existence of semilinear thermoelastic Timoshenko system with two damping effects.
\newblock \emph{Acta Math. Sci. Ser. B (Engl. Ed.)} \textbf{39} (2019), no. 6, 1461--1486.
\bibitem{Xu-Mori-Kawashima=2015}
\newblock J. Xu, N. Mori, S. Kawashima.
\newblock Global existence and minimal decay regularity for the Timoshenko system: the case of non-equal wave speeds.
\newblock \emph{J. Differential Equations} \textbf{259} (2015), no. 11, 5533--5553.
\bibitem{Zhang=2001}
\newblock Q.S. Zhang.
\newblock A blow-up result for a nonlinear wave equation with damping: the critical case.
\newblock \emph{C. R. Acad. Sci. Paris S\'er. I Math.} \textbf{333} (2001), no. 2, 109--114.
\end{thebibliography}
\end{document}